\newcommand{\set}[1]{\left\lbrace #1 \right\rbrace}
\newcommand{\lrpar}[1]{\left( #1 \right)}
\newcommand{\lrangle}[1]{\left< #1 \right>}
\newcommand{\llrrbracket}[1]{\left\llbracket #1 \right\rrbracket}
\newcommand{\seq}[2]{\left(#1\right)_{#2}}
\newcommand{\floor}[1]{\left\lfloor #1 \right\rfloor}
\newcommand{\abs}[1]{\ensuremath{\left| #1 \right|}}
\newcommand{\norm}[1]{\ensuremath{\left\| #1 \right\|}}
\newcommand{\proba}[1]{\ensuremath{\mathbb P\left(#1\right)}}
\newcommand{\esp}[1]{\ensuremath{\mathbb E\left[#1\right]}}
\newcommand{\indset}[1]{\ensuremath{\mathbbm{1}_{{#1}}}}
\newcommand{\ind}[1]{\indset{\set{#1}}}
\renewcommand{\epsilon}{\varepsilon}
\renewcommand{\phi}{\varphi}
\renewcommand{\bar}{\overline}
\DeclareMathOperator{\id}{id}
\DeclareMathOperator{\Tr}{Tr}
\DeclareMathOperator{\Lip}{Lip}
\let\div\undefined
\DeclareMathOperator{\div}{div}
\newcommand{\M}{{\mathcal M}}
\renewcommand{\L}{{\mathcal L}}
\newcommand{\C}{{\mathcal C}}
\newcommand{\T}{{\mathbb T}}
\newcommand{\R}{{\mathbb R}}
\newcommand{\E}{{\mathbb E}}
\newcommand{\fSet}{{L^2(\M^{-1})}}
\newcommand{\edstopped}{^{\epsilon,\delta,\tau^\delta}}
\newcommand{\edstoppedi}{^{\epsilon_i,\delta_i,\tau^{\delta_i}}}
\newcommand{\dstopped}{^{\delta,\tau^\delta}}
\newcommand{\dstoppedi}{^{\delta_i,\tau^{\delta_i}}}
\newcommand{\n}{\ell}
\theoremstyle{plain}
	\newtheorem{theorem}{Theorem}[section]
	\newtheorem{proposition}[theorem]{Proposition}
	\newtheorem{lemma}[theorem]{Lemma}
\theoremstyle{definition}
	\newtheorem{definition}{Definition}[section]
	\newtheorem{assumption}{Assumption}
\theoremstyle{remark}
	\newtheorem{example}{Example}[section]
\title{Asymptotic behavior of a class of multiple time scales stochastic kinetic equations}
\author{Charles-Edouard Br\'ehier and Shmuel Rakotonirina-Ricquebourg\thanks{Univ Lyon, Université Claude Bernard Lyon 1, CNRS UMR 5208, Institut Camille Jordan, 43 blvd. du 11 novembre 1918, F-69622 Villeurbanne cedex, France,\texttt{brehier@math.univ-lyon1.fr},\texttt{rakoto@math.univ-lyon1.fr}}}
\begin{document}

\maketitle

\begin{abstract}
We consider a class of stochastic kinetic equations, depending on two time scale separation parameters $\epsilon$ and $\delta$: the evolution equation contains singular terms with respect to $\epsilon$, and is driven by a fast ergodic process which evolves at the time scale $t/\delta^2$. We prove that when $(\epsilon,\delta)\to (0,0)$ the density converges to the solution of a linear diffusion PDE. This is a mixture of diffusion approximation in the PDE sense (with respect to the parameter $\epsilon$) and of averaging in the probabilistic sense (with respect to the parameter $\delta$). The proof employs stopping times arguments and a suitable perturbed test functions approach which is adapted to consider the general regime $\epsilon\neq \delta$.
\end{abstract}

%\tableofcontents

\section{Introduction}

Multiscale and/or stochastic models are popular in all fields of science and engineering. In this paper, we consider a stochastic kinetic partial differential equation of the type
\begin{equation} \label{eq:SPDEintro}
	\partial_t f^{\epsilon,\delta} + \frac{1}{\epsilon} a(v) \cdot \nabla_x f^{\epsilon,\delta} + b(v) \cdot \nabla_x f^{\epsilon,\delta} + \sigma(m^\delta(t,x)) f^{\epsilon,\delta} = \frac{1}{\epsilon^2} Lf^{\epsilon,\delta},
\end{equation}
with initial condition $f^{\epsilon,\delta}(0) = f^{\epsilon,\delta}_0$. The unknow $f^{\epsilon,\delta}$ is a function of time $t\ge 0$, position $x\in\T^d$ (the flat $d$-dimensional torus) and velocity $v\in V$. Assumptions on the velocity fields $a$ and $b$ and on the mapping $\sigma$ are given below (see Section~\ref{sec:setting}). Note that $f^{\epsilon,\delta}(t,x,v)$ may be interpreted as a density of particles with position $x$ and velocity $v$ at time $t$; the system is not conservative (the integral of $f^{\epsilon,\delta}$ is not constant) due to the source term $\sigma(m^\delta) f^{\epsilon,\delta}$. In addition, the linear operator $L$ describes interactions between the particles: in this paper, we assume that $L$ is the Bhatnagar-Gross-Krook operator, given by
\begin{equation*}
	Lf = \rho \mathcal M - f,
\end{equation*}
where $\mu$ is a $\sigma$-finite measure on $V$, the spatial density is defined by $\rho \doteq \lrangle f \doteq \int_V f d\mu$, and where $\mathcal M \in L^1(V,d\mu)$ is a density function, often called the Maxwellian (see Assumption~\ref{ass:coefficients}).

The evolution equation~\eqref{eq:SPDEintro} depends on the so-called driving process $m^\delta$, defined as follows: one has $m^\delta(t,x)=m(t/\delta^2,x)$.

The stochastic evolution equation~\eqref{eq:SPDEintro} depends on two parameters $\epsilon$ and $\delta$. In this paper, we are interested in the asymptotic behavior when $(\epsilon,\delta)\to (0,0)$. We prove the following result: $\rho^{\epsilon,\delta}=\lrangle{f^{\epsilon,\delta}}$ converges to the solution $\bar\rho$ of the following partial differential equation:
\begin{equation} \label{eq:limit_equation_intro}
	\partial_t \bar\rho + J\cdot \nabla \bar\rho +\bar \sigma \, \bar\rho = \div (K\nabla \bar\rho),
\end{equation}
where $J$, $\bar\sigma$ and $K$ are defined below, with initial condition $\bar\rho(0)=\bar\rho_0=\lim \rho^{\epsilon,\delta}(0)$. We refer to the main results of this paper, Theorems~\ref{thm:main_Hneg} and~\ref{thm:main_L2} for rigorous statements, in particular concerning the mode of convergence. Let us describe how the form of the limit equation~\eqref{eq:limit_equation_intro} arises in the asymptotic regime.

On the one hand, the parameter $\epsilon$ drives the behavior of the deterministic part of the evolution. Under appropriate assumptions (including a centering condition for the velocity field $a$), the spatial density $\rho^{\epsilon,\delta}=\lrangle f^{\epsilon,\delta}$ converges when $\epsilon\to 0$ to the solution of a diffusion partial differential equation, where the velocity variable $v$ has been eliminated. In the literature, such convergence results are referred to as diffusion approximation results, see for instance~\cite{degond2000diffusion}. The result is also partly an averaging result, since the term $b(v)\cdot\nabla_x$ is replaced in the limit equation by $J\cdot\nabla_x$, where $J$ is the average of $b$ (with respect to an appropriate measure).

On the other hand, the parameter $\delta$ is a time-scale separation parameter which determines the random part of the evolution. When the process $m$ such that $m^\delta(t,x)=m(t/\delta^2,x)$ is assumed to be ergodic (see Section~\ref{sec:setting}, for instance one may consider an Ornstein-Uhlenbeck process), the randomness may be eliminated when $\delta\to 0$: only the average $\bar\sigma$ of $\sigma(m)$ with respect to the invariant distribution remains in the limit evolution equation (it is a law of large numbers effect). In the literature, such convergence results are referred to as averaging principle results.

In this paper, we thus prove the mixture of an diffusion approximation result in the PDE sense, and of an averaging principle result in the probability sense, when simultaneously $\epsilon\to 0$ and $\delta \to 0$. To the best of our knowledge, this regime has not been considered in the literature so far. Note that one of the major tasks in the analysis is to consider the general case when $\epsilon$ and $\delta$ go to $0$ independently. Indeed, the analysis would be simpler if $\epsilon=\delta$. Note also that it would be simpler if $\epsilon=0$ and $\delta\to 0$, or if $\epsilon\to 0$ and $\delta=0$, {\it i.e.} if the limits are taken successively. The latter case is not included in the analysis but may be handled with similar techniques. The limit equation~\eqref{eq:limit_equation_intro} is the same in all regimes.

For deterministic problems ($\sigma=0$), diffusion approximation results have been extensively studied. We refer for instance to~\cite{larsen1974asymptotic,bensoussan1979boundary}. Kinetic models with small parameters appear in various situations, for example when studying semi-conductors \cite{golse1992limite} and discrete velocity models \cite{lions1997diffusive}, or as limits for description of systems of particles, either with a single particle \cite{goudon2009stochastic} or multiple particles \cite{poupaud2003classical}. The asymptotic behavior of stochastic kinetic multiscale problems have also been recently studied: we refer to the seminal article~ \cite{debussche2011diffusion}, and the more recent contributions~\cite{debussche2017diffusion,debussche2020diffusion,rakotonirina2020diffusion}. In those works, the authors have obtained diffusion approximation results both in the PDE and the probabilistic senses: the limit equation is a stochastic linear diffusion PDE  driven by a Wiener process (with Stratonovich interpretation). Indeed, in those works $\sigma(m^\delta)$ (with $\delta=\epsilon$) is replaced by $m^{\delta}/\delta$ in~\eqref{eq:SPDEintro}, and the authors assume that the driving process $m^\delta$ satisfies an appropriate centering condition. In the present article, we consider a law of large numbers regime (hence the averaging principle result), instead of a central limit theorem regime. In spite of this fundamental difference, the setting is very close to~\cite{rakotonirina2020diffusion}: in particular a major technical difficulty which is solved in this paper is to avoid boundedness assumptions on the driving process $m$, using only moment conditions, which allow us to encompass for instance Ornstein-Uhlenbeck processes.

The literature concerning the averaging principle for stochastic differential equations and stochastic partial differential equations is huge. The averaging principle in the SDE case has been introduced in the seminal reference ~\cite{khasminskii1968principle}, see also the monograph~\cite{pavliotis2008multiscale} and references therein. In the SPDE case, authors have mainly studied the averaging principle for parabolic semilinear SPDE systems, see for instance~ \cite{cerrai2009khasminskii,cerrai2009averaging,brehier2012strong,brehier2020orders,rockner2020asymptotic} and references therein. Let us also mention the recent preprints~\cite{cerrai2020smoluchowski,xie2021diffusion} where diffusion approximation results are proved for such systems. The list of references above is not exhaustive.

The first main result of this manuscript is Theorem~\ref{thm:main_Hneg}: the convergence of $\rho^{\epsilon,\delta}=\lrangle{f^{\epsilon,\delta}}$ to $\bar\rho$ is understood as convergence in distribution (in the probabilistic sense), in the space $\C^0([0,T],H^{-\varsigma}(\T^d))$, for all arbitrarily small positive $\varsigma$. Under an additional assumption (which allows us to employ an averaging lemma), the convergence holds in the space $L^2([0,T],L^2(\T^d))$, see the second main result of this manuscript, Theorem~\ref{thm:main_L2}. The functional spaces above are the standard spaces where convergence holds in the deterministic case. The convergence in distribution is the natural mode of the convergence for the probabilistic variable. However, since the limit equation~\eqref{eq:limit_equation_intro} is deterministic, if the limit initial condition $\bar\rho_0$ is also deterministic, then the convergence holds in probability.

Let us now describe the main tools for the proof of the main results of this manuscript. We follow a martingale problem approach combined with the perturbed test functions method, as in the classical article \cite{papanicolaou1977martingale} (see also \cite{kushner1984approximation,ethier1986markov,fouque2007wave,pavliotis2008multiscale,de2012diffusion}). Perturbed test functions in the context of PDEs with diffusive limits applies in various situations, for instance in the context of viscosity solutions \cite{evans1989perturbed}, nonlinear Schrödinger equations \cite{de2012diffusion},
%\cite{marty2006splitting,de2010nonlinear,debussche2010quintic,de2012diffusion},
a parabolic PDE \cite{pardoux2003homogenization} or, as in this article, kinetic SPDEs \cite{debussche2011diffusion,debussche2017diffusion,debussche2020diffusion,rakotonirina2020diffusion}.

The idea of the perturbed test function is to identify the limit generator $\L$ of the limit equation~\eqref{eq:limit_equation_intro} as
\[
\L\phi=\underset{(\epsilon,\delta) \to (0,0)}\lim \L^{\epsilon,\delta} \phi^{\epsilon,\delta},
\]
where $\phi=\phi(\rho)$ is an arbitrary test function (depending only on the spatial density variable $\rho=\lrangle f$ appearing in the limit equation), and $\phi^{\epsilon,\delta}=\phi^{\epsilon,\delta}(f,m)$ is the perturbed test function given by
\[
\phi^{\epsilon,\delta} = \phi + \epsilon \phi_{1,0} + \epsilon^2 \phi_{2,0} + \delta^2 \phi_{0,2} + \epsilon \delta^2 \phi_{1,2}.
\]
We refer to Proposition~\ref{prop:corrector} for a rigorous statement. Note that due to the assumption that in general $\epsilon\neq\delta$, the construction of the perturbed test function requires to define the corrector $\phi_{1,2}$ (corresponding to the term of the order $\epsilon\delta^2$). This corrector does not appear in the analysis if $\epsilon=\delta$. The construction of $\phi_{1,2}$ is one of the novelties of this manuscript.

In addition, like in the preprint~\cite{rakotonirina2020diffusion}, the driving process is not assumed to be bounded (like in~\cite{debussche2011diffusion} for instance), and only moment conditions are satisfied. This generalization allows us for instance to consider Ornstein-Uhlenbeck processes. The introduction of stopping times arguments is required, hence the need to control the asymptotic behavior of the stopping times $\tau^\delta$ when $\delta\to 0$. Note that we prove below that $\tau^\delta\to\infty$ in probability: the arguments to prove convergence are thus simpler than in~\cite{rakotonirina2020diffusion}.

The main result of this manuscript is the convergence of $\rho^{\epsilon,\delta}$ to $\bar\rho$. In future works, it may be interesting to study the fluctuations, {\it i.e.} to prove that $\rho^{\epsilon,\delta}-\bar\rho$, properly rescaled, converges in distribution to a Gaussian process, solution of a linear stochastic evolution equation. Again the use of the perturbed test functions approach may be a suitable approach. It would also be interesting to investigate rates of convergence, both in the strong and weak senses, in the spirit of~\cite{brehier2012strong,brehier2020orders} concerning parabolic systems, using Kolmogorov equations techniques. Finally, the validity of diffusion approximation and averaging principle results is fundamental for the efficient numerical simulation of the systems. In the deterministic setting, there has been a lot of activity to develop asymptotic preserving and uniformly accurate numerical methods, see  for instance~\cite{jin1999efficient,jin2012asymptotic}. An asymptotic preserving scheme has been proposed in~\cite{ayi2019analysis} for a class of kinetic stochastic equations in the diffusion approximation regime. In a future work~\cite{brehier2021asymptotic}, we plan to investigate the generalization of the asymptotic preserving schemes introduced and analyzed in the recent preprint~\cite{brehier2020asymptotic} for stochastic differential equations.

The manuscript is organized as follows. The setting is described in Section~\ref{sec:setting} (in particular precise assumptions concerning the driving process $m$ are provided). The main results, Theorems~\ref{thm:main_Hneg} and~\ref{thm:main_L2}, are stated and discussed in Section~\ref{sec:results}. Section~\ref{sec:proofdescription} is devoted to the proofs of the main results, using martingale problem formulations, tightness arguments and identification of the limit. Auxiliary fundamental results are stated there: first, Proposition~\ref{prop:taue_to_infty} concerning the asymptotic behavior of the stopping time; second, Proposition~\ref{prop:L2_bound} providing an a priori estimate in an appropriate weighted $L^2$ norm, uniformly with respect to $\epsilon,\delta$; third, Proposition~\ref{prop:corrector} giving the details on the perturbed test functions. The proofs of those three auxiliary results are given in Section~\ref{sec:proofs}. Note that these three results are essential, and their proofs are given in a separate section since they are the most original technical contributions of this manuscript (compared with the more standard strategy described in Section~\ref{sec:proofdescription}).

\section{Setting}\label{sec:setting}

\subsection{Notation}

The solution $f^{\epsilon,\delta}$ of the stochastic kinetic problem considered in this article is a process $f^{\epsilon,\delta}:(t,x,v)\in[0,\infty)\times  \T^d\times V\to\R$, where $V$ is a measurable space, equipped with a $\sigma$-finite measure $\mu$, and $\T^d$ denotes the flat $d$-dimensional torus.

Let us introduce the standard Hilbert spaces $L^2_x=L^2(\T^d,\R)$ and $L^2=L^2(\T^d\times V,\R)$ of real-valued functions, with inner products defined as follows:
\[
	\lrpar{h,k}_{L^2_x} = \int_{\T^d} h(x) k(x) dx,\quad
	\lrpar{h,k}_{L^2} = \int_{\T^d \times V} h(x,v) k(x,v) dx d\mu(v).
\]
The associated norms are denoted by $\norm{\cdot}_{L_x^2}$ and $\norm{\cdot}_{L^2}$ respectively.

The following notation is used in the sequel: for all $f\in L^1(\T^d\times V)$, let $\rho\in L^1(\T^d)$ be defined by
\[
\rho\doteq \lrangle f \doteq \int_V f d\mu.
\]

In addition, for all $T \in (0,\infty)$, $\varsigma \in (0,1]$, $i \in \mathbb N_0$ and $p \in [1,\infty]$, introduce the Banach spaces $\C^i_x \doteq \C^i(\T^d,\R)$, $\C^0_T H^{-\varsigma}_x \doteq \C^0([0,T],H^{-\varsigma}(\T^d,\R))$ and $L^p_T L^2_x \doteq L^p([0,T],L^2(\T^d,\R))$.

Finally, the state space $E$ of the driving stochastic process is assumed to be a Banach space, with norm denoted by $\norm{\cdot}_E$.

\subsection{Assumptions on the coefficients}

Let us now state the assumptions concerning the linear operator $L$ and the mappings $a$, $b$ and $\sigma$.
\begin{assumption}\label{ass:coefficients}
    \begin{enumerate}
        \item Let the mapping $\M\in L^1(V,\mu)$, be such that $\M(v)>0$ for all $v\in V$, and normalized such that $\int_{V}\M(v)d\mu(v)=1$.
        \item The linear operator $L$ is defined as follows: for all $f\in L^1(V,\mu)$
        \begin{equation} \label{eq:def_BGK}
            Lf=\rho\M-f=\lrangle f \M-f.
        \end{equation}
        \item The mappings $a:V\to\R^d$ and $b:V\to\R^d$ are bounded. In addition $a$ satisfies the centering condition
        \[
        \int_V a(v)\M(v)d\mu(v)=0.
        \]
        \item The linear operators $A$ and $B$ are defined by
	\begin{gather*}
		Af(x,v) \doteq a(v) \cdot \nabla_x f(x,v),\\ %\quad D(A) \doteq \set{f \in \fSet \mid (x,v) \mapsto a(v) \cdot \nabla_x f(x,v) \in \fSet},\\
		Bf(x,v) \doteq b(v) \cdot \nabla_x f(x,v). %\quad D(B) \doteq \set{f \in \fSet \mid (x,v) \mapsto b(v) \cdot \nabla_x f(x,v) \in \fSet}.
	\end{gather*}	
        \item The mapping $\sigma:E\to \C^{\floor{d/2}+2}_x$ is Lipschitz continuous.
    \end{enumerate}
\end{assumption}

Let us also introduce the weighted Hilbert space $\fSet$, with the inner product
\[
\lrpar{h,k}_{\fSet} = \int_{\T^d \times V} h(x,v) k(x,v) dx \frac{d\mu(v)}{\M(v)}.
\]
The associated norm is denoted by $\norm{\cdot}_{\fSet}$. Observe that applying the Cauchy-Schwarz inequality yields the following results: for all $f\in\fSet$, one has $f\in L^1(\T^d\times V)$, $\rho\in L_x^2$, and $Lf\in\fSet$, with
\begin{align*}
\iint_{\T^d\times V}|f(x,v)|d\mu(v)dx&\le \|f\|_{\fSet}\\
\|\rho\M\|_{\fSet}=\|\rho\|_{L_x^2}&\le \|f\|_{\fSet}.
\end{align*}

In addition, if $f_1\in L^2(\M^{-1})\cap L^2$ and $f_2\in L^2(\M)\cap L^2$, then
\[
\abs{\lrpar{f_1,f_2}_{L^2}} \leq \norm{f_1}_\fSet \norm{f_2}_{L^2(\M)}
\]
as a consequence of the Cauchy-Schwarz inequality.

\begin{example}
    The conditions in Assumption~\ref{ass:coefficients} above are satisfied in the following two examples:
    \begin{enumerate}
    \item Continuous velocities
        \begin{itemize}
            \item The space $V=\R^d$ is equipped with the Lebesgue measure $d\mu(v)=dv$.
            \item The function $\M(v)=(2\pi)^{-d/2}\exp(-\|v\|^2/2)$ for all $v\in\R^d$ is the standard Maxwellian.
            \item The velocity $a$ is a bounded odd function. For instance, relativistic particles satisfy $a(v) = \frac{v}{\sqrt{1 + \norm{v}^2}}$ in convenient units.

        \end{itemize}
    \item Discrete velocities
        \begin{itemize}
            \item The space $V=\set{\pm 1}^d$ is equipped with the counting measure.
            \item The function $\M$ is constant: $\M(v)=\frac{1}{2d}$ for all $v \in \set{\pm 1}^d$
            \item The velocity $a$ is an odd function. For instance, the isotropic discrete velocity is given by $a(v) = v$ for all $v \in \set{\pm 1}^d$.
        \end{itemize}
    \end{enumerate}
In both examples, the function $\sigma$ is defined either as
            \begin{itemize}
                \item $\sigma(\n)(x)=\sigma_1(\n(x))$ for all $\n\in E=\C^{\floor{d/2}+2}_x$ and $x\in \T^d$, with a mapping $\sigma_1:\R\to\R$,
                \item or as $\sigma(\n)(x)=\sigma_2(x,\n)$ for $\n\in E=\R$ and $x\in\T^d$, with a mapping $\sigma_2:\T^d\times\R \to \R$,
            \end{itemize}
            where $\sigma_1,\sigma_2$ are of class $\C^{\floor{d/2}+3}$ with bounded derivatives of all orders.    
    
\end{example}

\subsection{Assumptions on the driving process}

The driving process is a Markov process, which satisfies the conditions below.
\begin{assumption} \label{ass:m}
The family $\seq{m_\n(t)}{\n \in E}$ defines a $E$-valued Markov process, where one has the initial condition $m_\n(0) = \n$. Let $\L_m$ denote its infinitesimal generator, with domain denoted by $D(\L_m)$.

We assume that this Markov process is ergodic, and that its unique invariant distribution, denoted by $\nu$, is integrable: $\int_E \norm{\n} d\nu(\n) < \infty$.

The following notation is used throughout the article: for all Lipschitz continuous mappings $\theta:E\to\R$, set
\[
\overline{\theta}=\int_E\theta(\n)d\nu(\n).
\]
\end{assumption}

Let $\n_0 \in E$ be a given initial condition, in the sequel the value of $\n_0$ is omitted to simplify notation: for all $t\ge 0$, let
\[
m(t)=m_{\n_0}(t).
\]

Assumption~\ref{ass:m} is sufficient to state the main convergence results below. However, the analysis of the asymptotic behavior of $f^{\epsilon,\delta}$ when $\delta\to 0$ requires additional technical assumptions. Since they are not needed to state the convergence results below, they may be skipped by the reader, until they are used to prove auxiliary results.

\begin{assumption}\label{ass:moments_m_gamma}
The Markov process introduced in Assumption~\ref{ass:m} satisfies the appropriate moment bounds: there exists $\gamma \in (2,\infty)$ such that
	\begin{equation} \label{eq:moments_m_gamma}
		\sup_{i \in \mathbb N_0} \esp{\sup_{t \in [i,i+1]} \norm{m(t)}_E^\gamma} < \infty.
	\end{equation}
\end{assumption}
The assumption that $\gamma>2$ is crucial in the analysis. Observe that Assumption~\ref{ass:moments_m_gamma} implies the following results:
\begin{align*}
\int \norm{\n}^\gamma d\nu(\n)&<\infty\\
\underset{t\ge 0}\sup~\esp{\norm{m(t)}_E^\gamma}&<\infty.
\end{align*}

A mixing assumption is employed below to have quantitative information on the large time behavior of the driving process.
\begin{assumption}\label{ass:mixing}
The Markov process introduced in Assumption~\ref{ass:m} satisfies a mixing property: there exists a nonnegative function $\gamma_{\rm mix} \in L^1(\R^+)$ such that, for all initial conditions $\n_1, \n_2 \in E$, there exists a coupling $(m_{\n_1}^*,m_{\n_2}^*)$ of $(m_{\n_1},m_{\n_2})$, satisfying the inequality
	\begin{equation*}
		\esp{\norm{m_{\n_1}^*(t) - m_{\n_2}^*(t)}_E} \leq \gamma_{\rm mix}(t) \norm{\n_1-\n_2}_E
	\end{equation*}
for all $t\ge 0$.
\end{assumption}
Let us recall that a $E\times E$-valued random process $(m_{\n_1}^*,m_{\n_2}^*)$ is a coupling of $(m_{\n_1},m_{\n_2})$ if the marginals safisty $m_{\n_1}^* \stackrel{d}{=} m_{\n_1}$ and $m_{\n_2}^* \stackrel{d}{=} m_{\n_2}$ (where equality is understood in distribution). As a consequence of Assumption~\ref{ass:mixing}, it is straightforward to obtain the following result: if $\theta:E\to\R$ is Lipschitz continuous, then for all $\n\in E$ and all $t\ge 0$, one has
\[
\big|\E[\theta(m_\n(t))]-\overline{\theta}\big|\leq \max(1,\int_E \norm{\n'}_E d\nu(\n')){\rm Lip}(\theta)\gamma_{\rm mix}(t)(1+\norm{\n}_E),
\]
where ${\rm Lip}(\theta)$ denotes the Lipschitz constant of $\theta$. Due to this consequence of the mixing property (Assumption~\ref{ass:mixing}), the resolvent operator $R_0$ introduced below is well-defined.
\begin{definition}[Resolvent operator] \label{def:resolvent}
	Let $\lrpar{\C^{\floor{d/2}+2}_x}^*$ be the set of continuous linear forms on $\C^{\floor{d/2}+2}_x$ and let $E^*\!\lrpar{\sigma} \doteq \set{u \circ \sigma \mid u \in \lrpar{\C^{\floor{d/2}+2}_x}^*}\subset \Lip(E,\R)$. The resolvent operator $R_0$ is defined as follows: for all $\theta \in E^*\!\lrpar{\sigma}$
	\begin{equation*}
		R_0 (\theta - \bar \theta)(\n) = \int_0^\infty \esp{\theta(m_\n(t)) - \bar \theta} dt.
	\end{equation*}
	
	The function $\psi_{\theta}\doteq R_0(\theta-\bar{\theta})$ is the unique solution of the Poisson equation
	\begin{equation}
	-\L_m\psi_\theta=\theta-\bar \theta,
	\end{equation}
	satisfying the condition $\bar \psi_\theta=0$.
\end{definition}

Note that the functions $\psi_{\theta}$ satisfy the following bound: there exists $C\in(0,\infty)$, such that for all $\theta\in E^*\!\lrpar{\sigma}$ and for all $\n\in E$, one has
\begin{equation}\label{eq:estimate_resolvent}
|\psi_\theta(\n)|\leq C{\rm Lip}(\theta)(1+\norm{\n}_E),
\end{equation}

The remaining assumption deals with the infinitesimal generator $\L_m$ of the driving process.
\begin{assumption} \label{ass:regular_Lm}
	For all $\theta_1,\theta_2 \in E^*\!\lrpar{\sigma}$, assume that $\psi_{\theta_1} \psi_{\theta_2}$ is in the domain $D(\L_m)$ of the generator $\L_m$ of the driving process, and that $\L_m(\psi_{\theta_1}\psi_{\theta_2})$ has at most polynomial growth:
	\begin{equation*}
		\sup_{\n \in E} \frac{\abs{\L_m \lrpar{\psi_{\theta_1} \psi_{\theta_2}}(\n)}}{1 + \norm{\n}_E^2} < \infty.
	\end{equation*}
\end{assumption}

The assumptions above are satisfied if the driving process is a $E$-valued Ornstein-Uhlenbeck process, as explained below.
\begin{example}
	Let $\seq{m_\n(t)}{\n \in E}$ be defined by
	\begin{equation} \label{eq:def_OU}
		dm_{\n,t} = - (m_{\n,t} - \bar m) dt + dW_t, \quad m_{\n,0} = \n,
	\end{equation}
	where $W$ is an $E$-valued Wiener process. It satisfies Assumption \ref{ass:m} since it is ergodic and its unique invariant distribution is a normal distribution, hence integrable. Moreover, we have
	\begin{equation} \label{eq:expr_OU}
		m_\n(t) = \n e^{-t} + \bar m \lrpar{1 - e^{-t}} + \int_0^t e^{s-t} dW_s.
	\end{equation}
	Assumption \ref{ass:moments_m_gamma} is satisfied for any $\gamma \in (2,\infty)$. The coupling $(m_{\n_1}^*,m_{\n_2}^*)$ of Assumption \ref{ass:mixing} is obtained by driving both processes by the same Wiener process $W$. Indeed, \eqref{eq:expr_OU} becomes
	\begin{equation*}
		m_{\n_1}^*(t) - m_{\n_2}^*(t) = (\n_1 - \n_2) e^{-t},
	\end{equation*}
	and Assumption \ref{ass:mixing} is satisfied with $\gamma_{\rm mix}(t) = e^{-t}$. Finally, with the notation of Assumption \ref{ass:regular_Lm}, we have
	\begin{equation*}
		\psi_\theta(\n) = \int_0^\infty \lrpar{\theta(\n) - \theta(\bar m)} e^{-t} dt = \theta(\bar m) - \theta(\n).
	\end{equation*}
	Since the infinitesimal generator is given by $\L_m \phi(\n) = D\phi(\n) \cdot (\bar m - \n) + \frac{1}{2} \Tr\lrpar{D^2\phi(\n)}$, Assumption \ref{ass:regular_Lm} is also satisfied.
\end{example}

\section{Main result}\label{sec:results}

\subsection{Description of the model and of the limit problem}

The multiscale stochastic problem considered in this article depends on two parameters $\epsilon$ and $\delta$. Since the objective of this work is to prove a convergence result when $(\epsilon,\delta) \to (0,0)$, without loss of generality it is assumed that $\epsilon\in(0,\epsilon_0]$ and $\delta\in(0,\delta_0]$, where $\epsilon_0,\delta_0$ are fixed -- a precise condition is stated below. To simplify notation, we use the following convention: $\seq{X^{\epsilon,\delta}}{\epsilon,\delta}$ stands for the family of random variables $\seq{X^{\epsilon,\delta}}{\epsilon\in(0,\epsilon_0],\delta\in(0,\delta_0]}$.

First, for all $\delta\in(0,\delta_0]$, the fast driving process $m^\delta$ is defined as follows: for all $t\ge 0$, set
\begin{equation}
m^\delta(t)=m(t/\delta^2),
\end{equation}
where $m$ is the driving process given by Assumption~\ref{ass:m}, with the initial condition $m^\delta(0)=m(0)=\n_0$, which is assumed to be independent of $\delta$.

We study the asymptotic behavior when $(\epsilon,\delta) \to (0,0)$ of the solution $f^{\epsilon,\delta}$ of the following stochastic kinetic problem
\begin{equation} \label{eq:SPDE}
	\partial_t f^{\epsilon,\delta} + \Bigl(\frac{1}{\epsilon} a(v)+ b(v)\Bigr) \cdot \nabla_x f^{\epsilon,\delta} + \sigma(m^\delta) f^{\epsilon,\delta} = \frac{1}{\epsilon^2} Lf^{\epsilon,\delta}.
\end{equation}
with initial condition $f^{\epsilon,\delta}(0) = f^{\epsilon,\delta}_0$.

For any fixed $\epsilon\in(0,\epsilon_0]$, $\delta\in(0,\delta_0]$, the problem~\eqref{eq:SPDE} is globally well-posed in the following sense.
\begin{proposition} \label{prop:well-posed}
Introduce the linear operator $A^\epsilon=\epsilon^{-1}A+B$, with domain $D(A^\epsilon) \doteq \set{f \in \fSet \mid (x,v) \mapsto \bigl(\epsilon^{-1}a(v)+b(v)\bigr) \cdot \nabla_x f(x,v) \in \fSet}$, for all $\epsilon\in(0,\epsilon_0]$.

	Let $T \in (0,\infty)$, $\epsilon \in (0,\epsilon_0]$ and $\delta \in (0,\delta_0]$. For any $f^{\epsilon,\delta}_0 \in \fSet$, there exists, almost surely, a unique mild solution $f^{\epsilon,\delta}$ of~\eqref{eq:SPDE} in $\C^0([0,T];\fSet)$, in the sense that, almost surely, for all $t\in[0,T]$, one has
		\begin{equation*}
f^{\epsilon,\delta}(t) = e^{- t A^\epsilon} f^{\epsilon,\delta}_0 + \int_0^t e^{- (t-s) A^\epsilon} \lrpar{\frac{1}{\epsilon^2} Lf^{\epsilon,\delta}(s) - \sigma(m^\delta(s)) f^{\epsilon,\delta}(s)}ds.
	\end{equation*}
\end{proposition}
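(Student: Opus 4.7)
The plan is to view \eqref{eq:SPDE} pathwise as a non-autonomous linear Cauchy problem on $\fSet$, in which $-A^\epsilon$ generates a transport group and $\epsilon^{-2}L - \sigma(m^\delta(t))\cdot$ acts as a bounded (in $f$) time-dependent perturbation whose operator norm is, almost surely, uniformly bounded on $[0,T]$. Existence and uniqueness will then follow from a pathwise Banach fixed point argument applied to the mild formulation.

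First, I would check that $-A^\epsilon$ generates a $C_0$-group of isometries $\seq{S^\epsilon(t)}{t \in \R}$ on $\fSet$, given explicitly by $(S^\epsilon(t) f)(x,v) = f\bigl(x - t(\epsilon^{-1}a(v) + b(v)), v\bigr)$. Since the transport coefficients do not depend on $x$, the translation preserves the weighted norm $\norm{\cdot}_\fSet$; boundedness of $a,b$ from Assumption~\ref{ass:coefficients} ensures strong continuity.

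Next, I would bound the two perturbation operators in the $\fSet$ operator norm. The Cauchy--Schwarz estimate $\norm{\rho\M}_\fSet = \norm{\rho}_{L_x^2} \le \norm{f}_\fSet$ recalled after Assumption~\ref{ass:coefficients} immediately yields $\norm{\epsilon^{-2}Lf}_\fSet \le 2\epsilon^{-2}\norm{f}_\fSet$. Since $\sigma$ is Lipschitz from $E$ to $\C^{\floor{d/2}+2}_x \hookrightarrow L_x^\infty$, the multiplication operator satisfies $\norm{\sigma(m^\delta(t))f}_\fSet \le \norm{\sigma(m^\delta(t))}_{L_x^\infty}\norm{f}_\fSet$ with $\norm{\sigma(m^\delta(t))}_{L_x^\infty} \le C(1 + \norm{m^\delta(t)}_E)$. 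By Assumption~\ref{ass:moments_m_gamma}, $\sup_{t \in [0,T]} \norm{m^\delta(t)}_E$ is almost surely finite, so almost surely
\[
K^{\epsilon,\delta} \doteq \sup_{t \in [0,T]}\bigl( 2\epsilon^{-2} + \norm{\sigma(m^\delta(t))}_{L_x^\infty}\bigr) < \infty.
\]

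On this full-measure event, I would pick $\tau \in (0,T]$ with $K^{\epsilon,\delta}\tau < 1/2$ and consider the map
\[
\Phi(f)(t) = S^\epsilon(t) f^{\epsilon,\delta}_0 + \int_0^t S^\epsilon(t-s) \lrpar{\epsilon^{-2}Lf(s) - \sigma(m^\delta(s))f(s)}\, ds
\]
on $\C^0([0,\tau];\fSet)$. The isometry of $S^\epsilon$ gives $\norm{\Phi(f)-\Phi(g)}_{\C^0([0,\tau];\fSet)} \le K^{\epsilon,\delta}\tau\, \norm{f-g}_{\C^0([0,\tau];\fSet)}$, so $\Phi$ is a contraction and the Banach fixed point theorem produces a unique local mild solution. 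Since $K^{\epsilon,\delta}$ and $\tau$ do not depend on $f$ (linearity), iterating $\lceil T/\tau \rceil$ times covers $[0,T]$ and yields the desired global mild solution in $\C^0([0,T];\fSet)$. The main technical point I foresee is not existence itself but the verification that this pathwise construction defines a genuinely $\fSet$-valued stochastic process with the expected progressive measurability in $\omega$; this should follow because the Picard approximations are built from Bochner integrals of progressively measurable integrands and converge uniformly on $[0,T]$ almost surely, so measurability passes to the limit.
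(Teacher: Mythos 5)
Your proposal is correct and takes essentially the same approach as the paper: the authors omit the proof of Proposition~\ref{prop:well-posed} precisely because it is ``a standard fixed point argument'' combined with the observation that $\sup_{t\in[0,T]}\norm{m^\delta(t)}_E<\infty$ almost surely by Assumption~\ref{ass:moments_m_gamma}, which is exactly the pathwise contraction scheme you carry out. The details you supply (the isometric transport group, the bound $\norm{Lf}_\fSet\le 2\norm{f}_\fSet$, and the measurability of the Picard limit) are the expected fleshing-out of that sketch.
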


The proof of Proposition~\ref{prop:well-posed} is based on a standard fixed point argument, combined with the following observation:
\[
\underset{t\in[0,T]}\sup~\norm{m^{\delta}(t)}_E\le \underset{i\le T\delta^{-2}+1}\sup~\underset{t\in[i,i+1]}\sup~\norm{m(t)}_E<\infty
\]
owing to Assumption~\ref{ass:moments_m_gamma} on the moments of the driving process $m$. The proof of Proposition~\ref{prop:well-posed} is thus omitted.

Note that the statement of Proposition~\ref{prop:well-posed} is given for fixed $\epsilon>0$ and $\delta>0$, and does not provide uniform estimates of the solution $f^{\epsilon,\delta}$ with respect to these parameters. Proving such estimates needs extra arguments, which are not needed to state the main results of the article. We refer to Proposition... below for the statement of an appropriate a priori estimate in the $\fSet$ norm, which requires the introduction of a stopping time $\tau^\delta$ defined below.

Let us now introduce the so-called averaged equation. First, set
\begin{equation}\label{eq:def-KJsigma}
\begin{aligned}
K&=\int_V a(v)\otimes a(v)\M(v)d\mu(v)\in {\rm Sym}^+(d)\\
J&=\int_V b(v)\M(v)d\mu(v)\in\R^d\\
\bar\sigma &=\int_E \sigma(\n)d\nu(\n)\in \C^{\floor{d/2}+2}_x.
\end{aligned}
\end{equation}
Note that $K$ and $J$ are well-defined since $a$ and $b$ are assumed to be bounded (see Assumption~\ref{ass:coefficients}). In addition, $\bar{\sigma}$ is well-defined since $\sigma$ is globally Lipschitz continuous from $E$ to $\C^{\floor{d/2}+2}_x$ (see Assumption~\ref{ass:coefficients}) and since the probability distribution $\nu$ is integrable (see Assumption~\ref{ass:m}).

The unknown $\bar\rho$ of the averaged equation is a mapping defined on $[0,\infty)\times \T^d$. We are finally in position to write the averaged equation:
\begin{equation} \label{eq:limit_equation}
	\partial_t \bar\rho + J\cdot \nabla \bar\rho +\bar \sigma \, \bar\rho = \div (K\nabla \bar\rho),
\end{equation}
with initial condition $\bar\rho(0)=\bar\rho_0$. In~\eqref{eq:limit_equation} above, $\nabla$ and $\div$ are the gradient and divergence operators with respect to the variable $x$ respectively. We consider solutions of~\eqref{eq:limit_equation} in the weak sense, see Definition~\ref{def:limit_equation} below. Note that the solution may be a random process, even if the evolution is deterministic: it may happen that the initial condition $\bar\rho_0$ is random.
\begin{definition} \label{def:limit_equation}
	Let $T\in(0,\infty)$ and $\bar\rho_0$ be a $L^2_x$-valued random variable. A stochastic process $\bar\rho$ is a weak solution of~\eqref{eq:limit_equation} in $L^2_x$ if $\bar\rho \in L^\infty_T L^2_x$ almost surely and if, for all $\xi \in H^2_x$ and $t \in [0,T]$, almost surely,
	\begin{equation} \label{eq:limit_equation_weak}
		\lrpar{\bar\rho(t),\xi}_{L^2_x} = \lrpar{\bar\rho_0,\xi}_{L^2_x} + \int_0^t \lrpar{\bar\rho(s),\div(K \nabla \xi) + J \cdot \nabla \xi - \bar \sigma \xi}_{L^2_x} ds.
	\end{equation}
\end{definition}
For any $L_x^2$-valued random variable, the averaged equation~\eqref{eq:limit_equation} admits a unique weak solution in the sense of Definition~\ref{def:limit_equation}.

\subsection{Convergence results}

Let us now state the main results of this article, concerning the asymptotic behavior of $\rho^{\epsilon,\delta}$ and $f^{\epsilon,\delta}$ when $(\epsilon,\delta) \to (0,0)$. In the sequel, it is convenient to impose the following non restrictive conditions on the parameters $\epsilon_0$ and $\delta_0$ (such that $\epsilon\in(0,\epsilon_0]$ and $\delta\in(0,\delta_0]$):
\begin{equation} \label{eq:def_epsilon_0}
		\epsilon_0 < \min\lrpar{1,\lrpar{4 (\norm{a}_{L^\infty} + \norm{b}_{L^\infty}) (1 + T \norm{\bar \sigma}_{\C^1_x})}^{-1}},\quad \delta_0 < \min\lrpar{1,\norm{\n_0}_E^{-1}}.
\end{equation}
Note that the condition for $\delta_0$ depends on the initial condition $\n_0$ of the driving process. This is one of the reasons why it is convenient to assume that it is deterministic and that it does not depend on $\delta$. Extensions to more general initial conditions for the driving process would require extra technical assumptions and computations, which are omitted to simplify the setting and focus on the main aspects of the analysis of the asymptotic behavior of the stochastic multiscale problem~\eqref{eq:SPDE} when $(\epsilon,\delta) \to (0,0)$.

The initial condition $f_0^{\epsilon,\delta}$ of~\eqref{eq:SPDE} is assumed to satisfy the following conditions.

\begin{assumption} \label{ass:f0}
	The family of initial conditions $\seq{f^{\epsilon,\delta}_0}{\epsilon,\delta}$ satisfies the following moment bound:
	\begin{equation*}
		\sup_{\epsilon\in(0,\epsilon_0],\delta\in(0,\delta_0]} \esp{\norm{f^{\epsilon,\delta}_0}_\fSet^{12}} < \infty.
	\end{equation*}
	In addition, the initial density $\rho^{\epsilon,\delta}_0 \in L^2_x$ converges in distribution when $(\epsilon,\delta) \to (0,0)$ in $L^2_x$ to $\bar\rho_0 \in L^2_x$: recall that this means that for any bounded continuous mapping $\Phi:L^2_x\to\R$, one has
	\[
	\esp{\Phi(\rho^{\epsilon,\delta}_0)}\xrightarrow[\epsilon,\delta \to 0]{} \esp{\Phi(\bar\rho_0)}.
	\]
\end{assumption}
Observe that in general $\bar\rho_0$ is a $L_x^2$-valued random variable.

We are now in position to state the two main convergence results of this article. First, see Theorem~\ref{thm:main_Hneg}, $\rho^{\epsilon,\delta}=\lrangle {f^{\epsilon,\delta}}$ converges in distribution to the unique solution $\bar\rho$ of the averaged equation~\eqref{eq:limit_equation}, in the space $\C^0_T H^{-\varsigma}_x$ for all $\varsigma>0$. Second, under an additional technical assumption which allows to apply an averaging lemma, one obtains a stronger result, see Theorem~\ref{thm:main_L2}: $\rho^{\epsilon,\delta}$ converges in distribution to $\bar\rho$ in the space $L^2_T L^2_x$, and $f^{\epsilon,\delta}$ converges in distribution to $\bar\rho\M$ in the space $L_T^2\fSet$. Moreover, the convergence results hold in probability if the initial condition $\bar\rho_0$ (given by Assumption~\ref{ass:f0}) of the averaged equation~\eqref{eq:limit_equation} is deterministic.

Let us state the first main result of this article.
\begin{theorem} \label{thm:main_Hneg}
	Let Assumptions \ref{ass:coefficients} to \ref{ass:f0} be satisfied. Let $f^{\epsilon,\delta}$ be the solution of~\eqref{eq:SPDE}. Then, when $(\epsilon,\delta) \to (0,0)$, the random variable $\rho^{\epsilon,\delta}=\lrangle{f^{\epsilon,\delta}}$ converges in distribution to the unique weak solution $\bar\rho$ of~\eqref{eq:limit_equation}, in $\C^0_T H^{-\varsigma}_x$.
	
	In addition, if the initial condition $\bar\rho_0=\underset{(\epsilon,\delta) \to (0,0)}\lim~\rho_0^{\epsilon,\delta}\in L^2_x$ is deterministic, then the convergence of $\rho^{\epsilon,\delta}$ to $\bar\rho$ holds in probability.
\end{theorem}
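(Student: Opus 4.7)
The proof will follow the classical martingale problem / perturbed test function strategy, adapted to the two-parameter regime $(\epsilon,\delta) \to (0,0)$ via the stopping times $\tau^\delta$. The overall scheme splits into (i) tightness of the laws of $\rho^{\epsilon,\delta}$, (ii) identification of all subsequential limits as weak solutions of~\eqref{eq:limit_equation}, and (iii) uniqueness for the limit equation, which promotes convergence along subsequences to full convergence in distribution; the convergence in probability when $\bar\rho_0$ is deterministic then follows because the limit is deterministic.

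\textbf{Step 1: stopped processes and tightness.} First I would work with the stopped solution $f^{\epsilon,\delta,\tau^\delta}$ and the stopped density $\rho^{\epsilon,\delta,\tau^\delta}$, exploiting the a priori $\fSet$-bound of Proposition~\ref{prop:L2_bound} together with Proposition~\ref{prop:taue_to_infty} which ensures $\tau^\delta \to \infty$ in probability: this implies that $\rho^{\epsilon,\delta}$ and $\rho^{\epsilon,\delta,\tau^\delta}$ have the same limits in distribution. Tightness in $\mathcal{C}^0_T H^{-\varsigma}_x$ is obtained by combining the $L^2_x$-bound on $\rho^{\epsilon,\delta,\tau^\delta}$ (uniform in $\epsilon,\delta$) with equicontinuity in time in a weaker space. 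To get the latter, I would write $\frac{d}{dt}(\rho^{\epsilon,\delta},\xi)_{L^2_x}$ for $\xi\in H^{s}_x$ with $s$ large enough, bound the kinetic and source terms by $\norm{f^{\epsilon,\delta}}_\fSet (1+\norm{m^\delta}_E) \norm{\xi}_{H^s_x}$, and conclude via a Kolmogorov-type criterion, using Assumption~\ref{ass:moments_m_gamma} together with the compact embedding $H^{-\varsigma/2}_x \hookrightarrow H^{-\varsigma}_x$.

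\textbf{Step 2: perturbed test functions and martingale problem.} For a test function $\phi:L^2_x \to \R$ depending only on $\rho$, I would use Proposition~\ref{prop:corrector} to introduce
\[
\phi^{\epsilon,\delta} = \phi + \epsilon\phi_{1,0} + \epsilon^2 \phi_{2,0} + \delta^2 \phi_{0,2} + \epsilon\delta^2 \phi_{1,2},
\]
chosen so that $\L^{\epsilon,\delta} \phi^{\epsilon,\delta} \to \L\phi$ where $\L$ is the generator associated with~\eqref{eq:limit_equation}. The correctors $\phi_{1,0}, \phi_{2,0}$ absorb the singular kinetic terms (producing $J\cdot\nabla\bar\rho$ and $\div(K\nabla\bar\rho)$) and $\phi_{0,2}$ handles the driving process via the resolvent $R_0$, yielding $\bar\sigma$; the cross corrector $\phi_{1,2}$ is needed precisely because $\epsilon\neq\delta$, to kill the remaining $O(\delta^2/\epsilon)$ terms. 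Applying the martingale property to $\phi^{\epsilon,\delta}(f^{\epsilon,\delta,\tau^\delta},m^{\delta,\tau^\delta})$ gives, after stopping, an identity of the form
\[
\phi^{\epsilon,\delta}(f^{\epsilon,\delta,\tau^\delta}(t),m^{\delta,\tau^\delta}(t)) - \phi^{\epsilon,\delta}(f_0^{\epsilon,\delta},\n_0) - \int_0^{t\wedge \tau^\delta} \L^{\epsilon,\delta}\phi^{\epsilon,\delta}(\cdot)\,ds
\]
is a martingale with controllable moments.

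\textbf{Step 3: passing to the limit and uniqueness.} Fix a subsequence along which $\rho^{\epsilon,\delta}$ converges in distribution to some $\tilde\rho$ in $\mathcal{C}^0_T H^{-\varsigma}_x$; by Skorokhod's representation theorem I may assume almost sure convergence on a suitable probability space. The uniform estimates of Step 1 show that the perturbation terms $\epsilon\phi_{1,0}, \ldots, \epsilon\delta^2\phi_{1,2}$ vanish in $L^1(\Omega)$, while $\tau^\delta\to\infty$ in probability allows removal of the stopping time in the limit. Combined with Assumption~\ref{ass:f0} on the initial data, this yields that $\tilde\rho$ satisfies~\eqref{eq:limit_equation_weak} for every $\xi\in H^2_x$, hence is a weak solution in the sense of Definition~\ref{def:limit_equation}. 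Since~\eqref{eq:limit_equation} has a unique weak solution for each initial datum $\bar\rho_0$, one concludes $\tilde\rho = \bar\rho$ in law; Prokhorov's theorem then upgrades subsequential convergence to full convergence in distribution in $\mathcal{C}^0_T H^{-\varsigma}_x$. Finally, when $\bar\rho_0$ is deterministic, the limit $\bar\rho$ is deterministic, and convergence in distribution to a constant is equivalent to convergence in probability.

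\textbf{Main obstacle.} The most delicate step is passing to the limit in the martingale problem without boundedness of $m$: the perturbation terms $\phi_{0,2},\phi_{1,2}$ and the remainder in $\L^{\epsilon,\delta}\phi^{\epsilon,\delta}$ involve $\psi_\theta$, which by~\eqref{eq:estimate_resolvent} grows linearly in $\norm{\n}_E$, and $\L_m(\psi_{\theta_1}\psi_{\theta_2})$ which grows quadratically by Assumption~\ref{ass:regular_Lm}. Controlling these uniformly requires the combination of the stopping time $\tau^\delta$ (to tame $\norm{m^\delta}_E$), the $\gamma>2$ moment bound of Assumption~\ref{ass:moments_m_gamma} (to get a uniform $L^1$ control via Hölder), and the twelve finite moments on $f_0^{\epsilon,\delta}$ of Assumption~\ref{ass:f0} (to apply Proposition~\ref{prop:L2_bound}). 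Handling the $\epsilon\delta^2\phi_{1,2}$ term, which is absent when $\epsilon=\delta$, is the genuinely new analytical difficulty.
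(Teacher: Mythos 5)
Your overall architecture coincides with the paper's: stopped processes plus Slutsky via Proposition~\ref{prop:taue_to_infty}, tightness, the perturbed-test-function martingale problem with the new corrector $\phi_{1,2}$, identification of subsequential limits, uniqueness, and Portmanteau for convergence in probability. However, two steps as you state them have genuine gaps. First, your tightness argument (Step 1) via a Kolmogorov-type criterion is unlikely to close under the stated hypotheses. Pathwise, the singular term is manageable: since $a$ is centered, $\epsilon^{-1}\lrangle{a\cdot\nabla_x f^{\epsilon,\delta}} = -\epsilon^{-1}\lrangle{a\cdot\nabla_x Lf^{\epsilon,\delta}}$, and~\eqref{eq:L2_bound} gives an almost sure H\"older-$\tfrac12$ modulus. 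But the source term contributes increments of size $\int_{t_1}^{t_2}(1+\norm{m^\delta(s)}_E)\,ds\cdot\norm{f_0^{\epsilon,\delta}}_\fSet$, and a Kolmogorov criterion requires $p$-th moment bounds with time exponent strictly greater than $1$, hence $p>2$. Since Assumption~\ref{ass:moments_m_gamma} only provides $\gamma>2$ moments of $m$ (with $\gamma$ possibly arbitrarily close to $2$), H\"older forces $\E\norm{f_0^{\epsilon,\delta}}_\fSet^{p\gamma/(\gamma-p)}<\infty$, which can exceed the twelve moments of Assumption~\ref{ass:f0}; using the stopping-time bound $\norm{m^{\delta,\tau^\delta}}_E\le\delta^{-\alpha}$ instead introduces a factor diverging as $\delta\to 0$. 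This is exactly why the paper restricts to the \emph{bounded} test functions of $\Theta_{\lim}$ (compositions with $\chi\in\C^3_b$), works in the Skorokhod space via Jakubowski's criterion, and verifies Aldous's condition~\eqref{eq:aldous}, for which the $L^1$-in-expectation modulus $\Delta^{\frac12-\frac1\gamma}$ of Lemma~\ref{lemma:integral_bound} suffices even though its exponent is below $1$.

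Second, in Step 3 you pass directly to the weak formulation ``for every $\xi\in H^2_x$'' and then invoke uniqueness, but both points need an argument. Proposition~\ref{prop:corrector}, and hence the limiting martingale identity, is available only for $\phi\in\Theta_{\lim}$, i.e.\ for bounded nonlinear functionals $\chi(\lrpar{\rho,\xi}_{L^2_x})$; to recover the linear identity~\eqref{eq:limit_equation_weak} the paper performs a localization with odd cutoffs $\chi^r$ evaluated at the almost surely finite random level $S=\sup_{t\in[0,T]}\abs{\lrpar{\rho_\infty(t),\xi}_{L^2_x}}$, a step absent from your outline. Moreover, Definition~\ref{def:limit_equation} requires $\bar\rho\in L^\infty_T L^2_x$, whereas your limit is a priori only in $\C^0_T H^{-\varsigma}_x$; the paper proves membership in $L^\infty_T L^2_x$ \emph{and} uniqueness simultaneously through the spectral decomposition of the self-adjoint operator $\mathcal S\rho=\div(K\nabla\rho)-\bar\sigma\rho$, which yields $\lrpar{\rho_\infty(t,\cdot+tJ),e_i}_{L^2_x}=e^{\lambda_i t}\lrpar{\bar\rho_0,e_i}_{L^2_x}$ and the bound~\eqref{eq:limit_solution_norm}; uniqueness then follows by linearity. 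Without some substitute for this argument the conclusion ``every limit point equals $\bar\rho$, hence full convergence'' is incomplete. The remaining ingredients you list — the role of $\tau^\delta$ and Lemma~\ref{lemma:integral_bound} in taming the unbounded correctors $\phi_{0,2},\phi_{1,2}$, and Portmanteau when $\bar\rho_0$ is deterministic — match the paper; your use of Skorokhod representation where the paper argues by uniform integrability (Billingsley, Theorem 5.4) is a harmless variant.
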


An additional assumption is required to state Theorem~\ref{thm:main_L2}, as in~\cite{rakotonirina2020diffusion} (Assumption~\ref{ass:avlemma} below). It allows us to apply a so-called averaging lemma (see \cite[Theorem 2.3]{bouchut1999averaging}), developed for the study of kinetic PDEs, and to obtain convergence in the space $L_x^2$ (in Theorem~\ref{thm:main_L2}) instead of $H^{-\varsigma}$ (in Theorem~\ref{thm:main_Hneg}).
\begin{assumption}\label{ass:avlemma}
	\begin{itemize}
	\item The space for the velocity variable is given by $(V,d\mu) = (\R^n,\frac{d\mu}{dv}(v)dv)$,  with a Radon-Nikodym derivative (with respect to Lebesgue measure) satisfying $\frac{d\mu}{dv} \in H^1(\R^n)$.
	\item The mapping $a$ is locally Lipschitz continuous
	\item There exist $C_{a,\mu} \in (0,\infty)$ and $\varsigma_{a,\mu} \in (0,1]$ such that, for all $u \in S^{d-1}$, all $\lambda\in \R$ and all $\eta\in(0,\infty)$, one has
	\begin{equation*}
		 \int_{\lambda < a(v) \cdot u < \lambda + \eta} \lrpar{\abs{\frac{d\mu}{dv}(v)}^2 + \abs{\nabla \frac{d\mu}{dv}(v)}^2} dv \leq C_{a,\mu} \eta^{\varsigma_{a,\mu}}.
	\end{equation*}
	\end{itemize}
\end{assumption}

We are now in position to state the second main result of this article.
\begin{theorem} \label{thm:main_L2}
	Let Assumptions \ref{ass:coefficients} to \ref{ass:f0} and Assumption~\ref{ass:avlemma} be satisfied.
	
	Then, when $(\epsilon,\delta) \to (0,0)$, the random variable $\rho^{\epsilon,\delta}=\lrangle{f^{\epsilon,\delta}}$ converges in distribution to the unique weak solution $\bar\rho$ of~\eqref{eq:limit_equation}, in the space $L_T^2L_x^2$. Moreover, when $(\epsilon,\delta) \to (0,0)$, the random variable $f^{\epsilon,\delta}$ converges in distribution to $\bar\rho \M$, in the space $L^2_T \fSet$.
	
	In addition, if the initial condition $\bar\rho_0=\underset{(\epsilon,\delta) \to (0,0)}\lim~\rho_0^{\epsilon,\delta}\in L^2_x$ is deterministic, then the convergence of $\rho^{\epsilon,\delta}$ to $\bar\rho$ and of $f^{\epsilon,\delta}$ to $\bar\rho\M$ hold in probability.
\end{theorem}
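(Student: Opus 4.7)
\emph{Proof strategy.} The plan is to upgrade the weak-topology convergence of Theorem~\ref{thm:main_Hneg} to convergence in $L^2_T L^2_x$ by using the kinetic averaging lemma~\cite[Theorem~2.3]{bouchut1999averaging} (made applicable by Assumption~\ref{ass:avlemma}) to produce uniform spatial Sobolev regularity of $\rho^{\epsilon,\delta}$. Once tightness in $L^2_T L^2_x$ is established, every subsequential limit must coincide with the limit $\bar\rho$ already identified in Theorem~\ref{thm:main_Hneg}, and the whole family converges. The convergence of $f^{\epsilon,\delta}$ itself will then follow from the micro-macro identity $f^{\epsilon,\delta}-\rho^{\epsilon,\delta}\M = -Lf^{\epsilon,\delta}$ together with a dissipation estimate showing that $Lf^{\epsilon,\delta}$ is of order $\epsilon$ in $L^2_T \fSet$.

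\emph{Tightness via the averaging lemma.} I would rewrite~\eqref{eq:SPDE}, stopped at $\tau^\delta$, in the kinetic form
\[
\partial_t f\edstopped + \tfrac{1}{\epsilon}\, a(v)\cdot\nabla_x f\edstopped = \tfrac{1}{\epsilon^2}Lf\edstopped - \nabla_x \cdot \bigl(b(v)\, f\edstopped\bigr) - \sigma(m^\delta) f\edstopped.
\]
The $\fSet$ energy identity together with the dissipativity of the BGK operator, namely $\lrpar{Lf,f}_{\fSet}=-\norm{Lf}_{\fSet}^2$ (which follows from $Lf=\rho\M-f$ and $\int f\, d\mu=\rho$), yields, as a byproduct of Proposition~\ref{prop:L2_bound}, the bound $\norm{Lf\edstopped}_{L^2_T \fSet}^2 \lesssim \epsilon^2$ uniformly in $(\epsilon,\delta)$. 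After an appropriate time rescaling adapted to the singular transport $\epsilon^{-1}a(v)\cdot\nabla_x$, the right-hand side of the rewritten equation becomes uniformly bounded in $L^2_T \fSet$, decomposed as an $L^2$ source plus a spatial divergence of an $L^2$ flux, and Assumption~\ref{ass:avlemma} permits to apply the averaging lemma. This delivers a uniform fractional-Sobolev estimate $\norm{\rho\edstopped}_{L^2_T H^s_x} \leq C$ for some $s>0$. Combined with an Aubin--Lions-type argument, in which the kinetic equation itself provides the requisite time equicontinuity in a weak space, this produces relative compactness of $\seq{\rho\edstopped}{\epsilon,\delta}$ in $L^2_T L^2_x$.

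\emph{Identification of the limit and convergence of $f^{\epsilon,\delta}$.} Tightness and Prokhorov's theorem provide subsequential limits in $L^2_T L^2_x$. Since the embedding $L^2_T L^2_x \hookrightarrow \C^0_T H^{-\varsigma}_x$ is continuous, Theorem~\ref{thm:main_Hneg} forces every such limit to equal the unique weak solution $\bar\rho$ of~\eqref{eq:limit_equation}, so the whole family converges in distribution in $L^2_T L^2_x$; Proposition~\ref{prop:taue_to_infty}, asserting $\tau^\delta\to\infty$ in probability, guarantees that the stopping-time truncation does not affect the asymptotic. For the second assertion, the identity $f^{\epsilon,\delta}-\rho^{\epsilon,\delta}\M = -Lf^{\epsilon,\delta}$ combined with the $O(\epsilon)$ bound on $Lf\edstopped$ in $L^2_T \fSet$ gives $f^{\epsilon,\delta}-\rho^{\epsilon,\delta}\M \to 0$ in probability in $L^2_T \fSet$; the identity $\norm{\rho\M}_{\fSet} = \norm{\rho}_{L^2_x}$ then transfers the $L^2_T L^2_x$ convergence of $\rho^{\epsilon,\delta}$ into $L^2_T \fSet$ convergence of $\rho^{\epsilon,\delta}\M$ to $\bar\rho\M$, and the triangle inequality concludes. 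The in-probability statement when $\bar\rho_0$ is deterministic is automatic, since convergence in distribution to a deterministic limit is equivalent to convergence in probability.

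\emph{Main obstacle.} The principal technical difficulty is the uniform-in-$(\epsilon,\delta)$ control of the terms entering the averaging lemma in the absence of boundedness of the driving process $m^\delta$: all a priori and dissipation bounds must be established on the stopped process $f\edstopped$ and then transferred back to $f^{\epsilon,\delta}$ by means of Proposition~\ref{prop:taue_to_infty} and the moment bounds of Assumption~\ref{ass:moments_m_gamma}. Moreover, the singular coefficient $\epsilon^{-1}a(v)$ forces a careful time rescaling so that the averaging lemma of~\cite{bouchut1999averaging} yields constants that remain bounded as $\epsilon \to 0$, rather than ones that blow up; the entangled interplay between this kinetic smallness and the probabilistic averaging encoded in $\sigma(m^\delta)f^{\epsilon,\delta}$ is what requires the full strength of Propositions~\ref{prop:taue_to_infty} and~\ref{prop:L2_bound} simultaneously.
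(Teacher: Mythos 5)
Your proposal follows essentially the same route as the paper: tightness of $\seq{\rho\edstopped}{\epsilon,\delta}$ in $L^2_T L^2_x$ via the averaging lemma of \cite[Theorem 2.3]{bouchut1999averaging} after the time rescaling $t \mapsto t/\epsilon$ (this is exactly how the second part of Proposition~\ref{prop:tightness} is proved, writing $g^{\epsilon,\delta} = \epsilon \partial_t f^{\epsilon,\delta} + a(v)\cdot\nabla_x f^{\epsilon,\delta} + \epsilon b(v)\cdot\nabla_x f^{\epsilon,\delta}$ and concluding with \cite[Theorem 5]{simon1987compact}), identification of every subsequential limit with $\bar\rho$ through Theorem~\ref{thm:main_Hneg} and uniqueness, and the micro-macro identity $f^{\epsilon,\delta} = \rho^{\epsilon,\delta}\M - Lf^{\epsilon,\delta}$ combined with the dissipation part of Proposition~\ref{prop:L2_bound} (your identity $\lrpar{Lf,f}_\fSet = -\norm{Lf}_\fSet^2$ is correct and is indeed the mechanism behind that estimate), Slutsky's lemma, Proposition~\ref{prop:taue_to_infty} to remove the stopping time, and Portmanteau for the in-probability statement.

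Two local points need repair, though neither changes the architecture. First, the sentence asserting that ``the embedding $L^2_T L^2_x \hookrightarrow \C^0_T H^{-\varsigma}_x$ is continuous'' is false: elements of $L^2_T L^2_x$ carry no time-continuity, so the spaces are not nested in that direction. The identification step must instead compare the two modes of convergence in the common weaker space $L^2_T H^{-\varsigma}_x$, into which both $L^2_T L^2_x$ and $\C^0_T H^{-\varsigma}_x$ embed continuously; this is what the paper does, and your argument goes through verbatim after this one-line fix. Second, for compactness you propose an Aubin--Lions-type argument in which ``the kinetic equation itself provides the requisite time equicontinuity''. This can be made to work -- the macroscopic flux $\epsilon^{-1}\lrangle{a f^{\epsilon,\delta}} = -\epsilon^{-1}\lrangle{a\, Lf^{\epsilon,\delta}}$ is controlled by the same dissipation bound, and the stochastic source is handled on the stopped process -- but it requires care with the singular coefficients. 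The paper sidesteps this entirely by recycling the tightness in $\C^0_T H^{-\varsigma}_x$ already established via the perturbed test functions and Aldous's criterion (that is, via probabilistic martingale estimates rather than the PDE) to furnish the uniform modulus of continuity $w_{\rho\edstopped}(\Delta)$ required by Simon's criterion. Either source of equicontinuity is acceptable; just note that the direct PDE route you sketch would duplicate work the paper's Proposition~\ref{prop:tightness} has already done, and that the dissipation bound is $\lesssim \epsilon^2 \norm{f_0^{\epsilon,\delta}}_\fSet^2$ with a random right-hand side, so the final passage to convergence in probability of $Lf^{\epsilon,\delta}$ uses Markov's inequality together with Assumption~\ref{ass:f0}, as your obstacle paragraph correctly anticipates.
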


\subsection{Discussion}

The main results, Theorems~\ref{thm:main_Hneg} and~\ref{thm:main_L2}, state that diffusion approximation (in the PDE sense) and averaging (in the probabilistic sense) results hold, when $(\epsilon,\delta)\to (0,0)$. These results are natural generalizations of previously obtained results, either in the deterministic case ($\epsilon\to 0$, $\sigma=0$), or in the probabilistic case ($\delta\to 0$, $\epsilon>0$ fixed). In fact, using the same arguments as in Section~\ref{sec:proofdescription} below, one may obtain the following results, where the limits $\epsilon\to 0$ and $\delta\to 0$ are taken successively.

On the one hand, if $\delta>0$ is fixed, then $\rho^{\epsilon,\delta}=\lrangle f^{\epsilon,\delta}$ converges when $\epsilon\to 0$, to the solution $\rho^{0,\delta}$ of the evolution equation
\begin{equation*}
\partial_t \rho^{0,\delta} + \sigma(m^\delta) \rho^{0,\delta} = \div(K \nabla \rho^{0,\delta}) - J \cdot \nabla \rho^{0,\delta}.
\end{equation*}
That result is a standard diffusion approximation result in the PDE sense. Then, when $\delta\to 0$, $\rho^{0,\delta}$ converges to the solution $\bar\rho$ of the limit equation~\eqref{eq:limit_equation}, owing to the standard averaging principle for stochastic problems.

On the other hand, if $\epsilon>0$ is fixed, then owing to the standard averaging principle, $f^{\epsilon,\delta}$ converges when $\delta\to 0$, to the solution $f^{\epsilon,0}$ of the evolution equation
\begin{equation*}
	\partial_t f^{\epsilon,0} + \frac{1}{\epsilon} a(v) \cdot \nabla_x f^{\epsilon,0} + b(v) \cdot \nabla_x f^{\epsilon,0} + \bar \sigma f^{\epsilon,0} = \frac{1}{\epsilon^2} Lf^{\epsilon,0}.
\end{equation*}
Then, when $\epsilon\to 0$, $\rho^{\epsilon,0}=\lrangle f^{\epsilon,0}$ converges to the solution $\bar\rho$ of the limit equation~\eqref{eq:limit_equation}, owing to the standard diffusion approximation result in the PDE sense.

To the best of our knowledge, the results above have not been rigorously proved in the literature, however they are variants of well-studied results. The proofs of Theorems~\ref{thm:main_Hneg} and~\ref{thm:main_L2} do not encompass those regimes when either $\epsilon=0$ or $\delta=0$: essentially this would require to adapt the construction of the perturbed test function. Indeed, below we directly focus on the behavior of $\rho^{\epsilon,\delta}$, thus the convergence $f^{\epsilon,\delta}\to f^{\epsilon,0}$ when $\delta\to 0$ cannot be covered directly, for instance. In addition, one of the arguments of the proofs is the convergence $\tau^\delta\to\infty$ when $\delta\to 0$ (where $\tau^\delta$ are stopping times defined below), thus the convergence $\rho^{\epsilon,\delta}\to\rho^{0,\delta}$ when $\epsilon\to 0$ cannot be covered without substantial modifications.

Still, one obtains the following result:
\[
\underset{\delta\to 0}\lim~\underset{\epsilon\to 0}\lim~\rho^{\epsilon,\delta}=\underset{\epsilon\to 0}\lim~\underset{\delta\to 0}\lim~\rho^{\epsilon,\delta}=\underset{(\epsilon,\delta)\to (0,0)}\lim\rho^{\epsilon,\delta}=\bar\rho,
\]
where the convergence is understood in the appropriate sense. The analysis presented in this article thus departs from the setting of~\cite{debussche2011diffusion,debussche2017diffusion,debussche2020diffusion,rakotonirina2020diffusion}.
 where in all cases there is only one small parameter $\epsilon=\delta$. Our result is expected but important: it shows that the diffusion approximation and the averaging principle can be decoupled. As already mentioned in the introduction, considering the general case $\epsilon\neq \delta$ requires new arguments, in particular the construction of the perturbed test functions needs an additional corrector.

In the setting, it is assumed that the initial condition $m^\delta(0)$ of the fast driving process is deterministic and independent of $\delta$: $m^\delta(0)=m_0$. It would be possible to extend the results to more general initial conditions, under appropriate modified moment conditions. This would for instance allow us to include the case where $m^\delta(0)$ is random and distributed following the ergodic invariant distribution $\nu$. Note also that if either the mapping $\sigma$ or the process $m$ would have a bounded support, the analysis would be simplified.

As explained in the introduction, we have left open several questions for future works: first, the analysis of fluctuations, second, the identification of (strong and weak) rates of convergence.

\section{Description of the proof}\label{sec:proofdescription}

In the sequel, the following convention is employed: given variables $u$ and parameters $\lambda$, the notation $X_1(u)\lesssim X_2(u)$ means that for all parameters $\lambda$, there exists $C(\lambda)\in(0,\infty)$ such that one has $X_1(u)\le C(\lambda)X_2(u)$ for all $u$. From the context the identification of variables (typically $\epsilon,\delta,f,\ell$) and parameters (typically $T,\varphi$) will be clear.

First, we introduce two of the most important tools of the proofs of the main results: the stopping time $\tau^\delta$ (see Section~\ref{sec:stoppingtime}), and the perturbed test function $\varphi^{\epsilon,\delta}=\varphi+\epsilon\varphi_{1,0}+\epsilon^2\varphi_{2,0}+\delta^2\varphi_{0,2}+\epsilon\delta^2\varphi_{1,2}$ (see Section~\ref{sec:ptf}), constructed for a class of admissible functions $\varphi$ such that $\varphi(f,\n)=\varphi(\lrangle f)=\varphi(\rho)$. Sections~\ref{sec:stoppingtime} and~\ref{sec:ptf} contain important auxiliary results, which require technical arguments in their proofs: the proofs are therefore postponed to Section~\ref{sec:proofs}.

The use of the stopping time $\tau^\delta$ is instrumental to obtain appropriate moment bounds of the solutions, uniformly with respect to $\epsilon,\delta$. We prove that $\tau^\delta\to \infty$ in probability when $\delta\to 0$. In the arguments, as a consequence of Slutsky's lemma (see more details below), it is then sufficient to consider the stopped processes defined by $f^{\epsilon,\delta,\tau^\delta}=f^{\epsilon,\delta}(\cdot\wedge\tau^\delta)$, $\rho^{\epsilon,\delta,\tau^\delta}=\rho^{\epsilon,\delta}(\cdot\wedge\tau^\delta)$ and $m^{\delta,\tau^\delta}=m^{\delta}(\cdot\wedge\tau^\delta)$.

The use of the perturbed test function method is standard in the analysis of multiscale stochastic problems, see for instance~ \cite{debussche2011diffusion,debussche2017diffusion,debussche2020diffusion,rakotonirina2020diffusion} in a similar context of stochastic kinetic equations. Compared to those references, note that it is necessary to consider two parameters $\epsilon,\delta$ which may be independent. When $\delta=\epsilon$, the construction of the corrector $\varphi_{1,2}$ is not needed.

We then proceed to the proof of the main results of this article. The arguments are standard. We first check a tightness property for $\seq{\rho^{\epsilon,\delta}}{\epsilon,\delta,\tau^\delta}$ in the appropriate function space. We then check that the Markov process $(f^{\epsilon,\delta,\tau^\delta},m^{\delta,\tau^\delta})$ is the solution of a martingale problem for all $\epsilon>0,\delta>0$, and letting $(\epsilon,\delta) \to (0,0)$, using the perturbed test function, we prove that any limit point of the family $\seq{\rho^{\epsilon,\delta,\tau^\delta}}{\epsilon,\delta}$ is a weak solution of the averaged equation, in the sense of~\eqref{eq:limit_equation_weak}. Since $\tau^\delta\to\infty$ in probability when $\delta\to 0$, a uniqueness argument for the averaged equation then concludes the proof of Theorem~\ref{thm:main_Hneg}. Theorem~\ref{thm:main_L2} is then obtained by the application of the averaging lemma. The fact that convergence holds in probability when $\bar\rho_0$ is deterministic is a straightforward well-known consequence of Portmanteau Theorem (and is not specific to the PDE framework of this paper): in that case the solution $\bar\rho$ of the averaged equation is also deterministic.

\subsection{Stopping times and a priori estimates}\label{sec:stoppingtime}

In this section, first we give the definition of the stopping time $\tau^\delta$, second we state that $\tau^\delta\to \infty$ in probability when $\delta\to 0$. Finally, we state an a priori estimate for $f^{\epsilon,\delta,\tau^\delta}$ in $\fSet$, which is uniform with respect to $\epsilon,\delta$. The proofs are postponed to Section~\ref{sec:proofs}.

For all $\delta\in(0,\delta_0]$ and all $t\ge 0$, set
\begin{equation} \label{eq:def_zeta_epsilon}
		\zeta^\delta(t) = \frac{1}{\delta} \int_0^t \lrpar{\sigma(m^\delta(s)) - \bar \sigma} ds \in \C^{\floor{d/2}+2}_x.
	\end{equation}

\begin{definition}\label{def:taue}
Let $\alpha \in (\frac{2}{\gamma},1)$, then for all $\delta\in(0,\delta_0]$, define
\begin{equation*} %\label{eq:def_taue}
		\tau^\delta = \tau^\delta_m \wedge \tau^\delta_\zeta,
	\end{equation*}
	where
	\begin{gather*}
		\tau^\delta_m = \inf \set{t \in [0,T] \mid \norm{m^\delta(t)}_E \geq \delta^{-\alpha}},\\%\label{eq:def_tauem}\\
		\tau^\delta_\zeta = \inf \set{t \in [0,T] \mid \norm{\zeta^\delta(t)}_{\C^1_x} \geq \delta^{-1}}. %\label{eq:def_tauez}
	\end{gather*}
\end{definition}
Note that $\tau^\delta$ is a stopping time for the filtration $\seq{\mathcal F^\delta_t}{t \in \R^+}$ generated by the driving process $m^\delta$, with $\mathcal F_t^\delta=\sigma\seq{m^\delta(s)}{0\le s\le t}$. In the definition above, $\gamma$ is the parameter introduced in Assumption~\ref{ass:moments_m_gamma}.

The initial conditions of the stopped processes $m^{\delta,\tau^\delta}$ and $\zeta^{\delta,\tau^\delta}$ satisfy $\norm{m\dstopped(0)}_E=\norm{\n_0}_E\le \delta_0^{-1}\le \delta^{-\alpha}$ (since $\delta\le \delta_0\le 1$ and $\alpha<1$), and $\zeta\dstopped(0)=0$. As a consequence, almost surely $\tau^\delta>0$, and the following estimates for $m^\delta$ and $\zeta^\delta$ hold: for all $t\ge 0$ and all $\delta\in(0,\delta_0]$, one has
\begin{equation} \label{eq:taue_bounds_m}
		\norm{m\dstopped(t)}_E = \norm{m^\delta(t \wedge \tau^\delta)}_E \leq \delta^{-\alpha},
\end{equation}
and
\begin{equation} \label{eq:taue_bounds_zeta}
		\norm{\zeta\dstopped(t)}_{\C^1_x} = \norm{\zeta^\delta(t \wedge \tau^\delta)}_{\C^1_x} \leq \delta^{-1}.
	\end{equation}

Let us now study the behavior of the stopping time $\tau^\delta$ when $\delta\to 0$.
\begin{proposition} \label{prop:taue_to_infty}
	When $\delta\to 0$, $\tau^\delta\to\infty$ in probability: for all $T>0$, one has
	\begin{equation*} %\label{eq:taue_to_infty}
		\proba{\tau^\delta < T} \xrightarrow[(\epsilon,\delta) \to 0]{} 0.
	\end{equation*}
\end{proposition}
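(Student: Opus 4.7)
\bigskip

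\noindent\textbf{Proof plan for Proposition~\ref{prop:taue_to_infty}.} Since $\tau^\delta=\tau^\delta_m\wedge\tau^\delta_\zeta$, a union bound gives
\[
\proba{\tau^\delta<T}\leq \proba{\tau^\delta_m<T}+\proba{\tau^\delta_\zeta<T},
\]
and it suffices to show that each term vanishes as $\delta\to 0$. The two pieces are handled by rather different arguments: the first is a direct moment estimate, while the second exploits the ergodic cancellation built into the definition of $\zeta^\delta$ via a Poisson-equation decomposition.

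For the term involving $\tau^\delta_m$, I would start from the time rescaling $m^\delta(t)=m(t/\delta^2)$, which gives
\[
\proba{\tau^\delta_m<T}=\proba{\sup_{s\in[0,T/\delta^2]}\norm{m(s)}_E\geq \delta^{-\alpha}}.
\]
Applying Markov's inequality at exponent $\gamma$, splitting $[0,T/\delta^2]$ into unit intervals and using the uniform moment bound~\eqref{eq:moments_m_gamma} of Assumption~\ref{ass:moments_m_gamma}, I obtain
\[
\proba{\tau^\delta_m<T}\leq \delta^{\alpha\gamma}\sum_{i=0}^{\lceil T/\delta^2\rceil}\esp{\sup_{t\in[i,i+1]}\norm{m(t)}_E^\gamma}\lesssim T\,\delta^{\alpha\gamma-2}.
\]
The key condition $\alpha>2/\gamma$ imposed in Definition~\ref{def:taue} makes $\alpha\gamma-2>0$, so this tends to $0$.

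The treatment of $\tau^\delta_\zeta$ is the substantive step. After the change of variable $u=s/\delta^2$ one has $\zeta^\delta(t)=\delta\int_0^{t/\delta^2}(\sigma(m(u))-\bar\sigma)\,du$, which is $O(1)$ in the ergodic scaling. To make this quantitative in the $\C^1_x$ norm, I would apply the resolvent operator of Definition~\ref{def:resolvent} pointwise: for each $x\in\T^d$ and each multi-index $\beta$ with $\abs{\beta}\leq 1$, the scalar map $\theta_{x,\beta}(\n)\doteq\partial_x^\beta\sigma(\n)(x)$ lies in $E^*\!(\sigma)$ with Lipschitz constant controlled by $\Lip(\sigma)$, uniformly in $x,\beta$. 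Writing $\psi_{x,\beta}\doteq R_0(\theta_{x,\beta}-\bar\theta_{x,\beta})$ and noting that the generator of $m^\delta$ is $\delta^{-2}\L_m$, Dynkin's formula yields
\[
\partial_x^\beta\zeta^\delta(t)(x)=-\delta\bigl[\psi_{x,\beta}(m^\delta(t))-\psi_{x,\beta}(\n_0)\bigr]+\delta\, M^{x,\beta,\delta}_t,
\]
where $M^{x,\beta,\delta}$ is a local martingale. The drift contribution is controlled by~\eqref{eq:estimate_resolvent} and Assumption~\ref{ass:moments_m_gamma}, giving
\[
\delta\,\esp{\sup_{t\in[0,T]}\bigl(|\psi_{x,\beta}(m^\delta(t))|+|\psi_{x,\beta}(\n_0)|\bigr)}\lesssim \delta\,(1+\esp{\sup_{t\in[0,T]}\norm{m^\delta(t)}_E})\lesssim 1
\]
uniformly in $x,\beta$ and $\delta$, while Doob's inequality together with Assumption~\ref{ass:regular_Lm} (which controls the quadratic variation through $\L_m(\psi_{x,\beta}^2)-2\psi_{x,\beta}\L_m\psi_{x,\beta}$) gives $\delta^2\esp{\sup_{t\leq T}|M^{x,\beta,\delta}_t|^2}\lesssim 1$. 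Taking the supremum over $x$ and $\abs{\beta}\leq 1$ then produces
\[
\esp{\sup_{t\in[0,T]}\norm{\zeta^\delta(t)}_{\C^1_x}}\lesssim 1
\]
uniformly in $\delta$, and Markov's inequality yields $\proba{\tau^\delta_\zeta<T}\leq\delta\cdot O(1)\to 0$.

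The main obstacle is the last step: one needs the Poisson-equation bounds to be uniform in the evaluation point $x\in\T^d$ and in the derivative order, and one must control the martingale $M^{x,\beta,\delta}$ with the same uniformity. This is precisely why Assumption~\ref{ass:regular_Lm} is formulated at the level of products $\psi_{\theta_1}\psi_{\theta_2}$ and why $\sigma$ is required to take values in $\C^{\floor{d/2}+2}_x$ rather than merely in $\C^1_x$: the extra regularity provides room for the pointwise application of the resolvent, and a Sobolev embedding (or direct supremum estimate) then upgrades the scalar bounds into the desired uniform estimate of the $\C^1_x$ norm.
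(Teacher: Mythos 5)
Your decomposition $\proba{\tau^\delta<T}\le\proba{\tau^\delta_m<T}+\proba{\tau^\delta_\zeta<T}$ matches the paper, but your treatment of the first term is a genuinely different and more elementary argument: the paper fixes $\alpha'\in(2/\gamma,\alpha)$, proves $\sum_i\proba{S_i\ge i^{\alpha'/2}}<\infty$ for $S_i=\sup_{t\in[i,i+1]}\norm{m(t)}_E$, and uses Borel--Cantelli to build an almost surely finite random variable $Z$ with $\norm{m^\delta(t)}_E\le Z+(t\delta^{-2})^{\alpha'/2}$, concluding qualitatively; your union bound over the $O(T\delta^{-2})$ unit intervals plus Markov at exponent $\gamma$ gives directly $\proba{\tau^\delta_m<T}\lesssim T\delta^{\alpha\gamma-2}$, which is correct, shorter, and quantitative (an explicit rate), both arguments resting on $\alpha\gamma>2$. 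For $\tau^\delta_\zeta$ your core mechanism is exactly the paper's: pointwise-in-$x$ Poisson functions $\psi_x^\beta=R_0(\theta_x^\beta-\bar\theta_x^\beta)$, a Dynkin decomposition of $\partial_x^\beta\zeta^\delta(t,x)$ into an $O(\delta)$ boundary term plus a martingale, and Doob's inequality with the bracket controlled through Assumption~\ref{ass:regular_Lm}. One real difference: you work with unstopped processes and pure moment bounds (using $\gamma>2$, so that $\delta\,\esp{\sup_{t\le T}\norm{m^\delta(t)}_E}\lesssim\delta^{1-2/\gamma}$ and the bracket's time integral is $O(\delta^{-2}T)$ by the uniform-in-time second moments), whereas the paper stops all processes at $\tau^\delta$, uses the pathwise bound~\eqref{eq:taue_bounds_m}, introduces an auxiliary exponent $p\in(\alpha,1)$, and splits on the event $\set{\tau^\delta_\zeta\ge\tau^\delta_m}$. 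Your variant is viable and arguably cleaner, but you then owe a justification that $M^{x,\beta,\delta}$ is a true square-integrable martingale rather than merely local (localize and pass to the limit by Fatou); the paper's stopping sidesteps exactly this point.

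There is, however, a genuine gap in your final step. The displayed claim that ``taking the supremum over $x$ and $\abs{\beta}\le1$ produces $\esp{\sup_{t\in[0,T]}\norm{\zeta^\delta(t)}_{\C^1_x}}\lesssim1$'' does not follow from what precedes it: your estimates are of the form $\sup_{x,\beta}\esp{\sup_t\abs{\cdot}}\lesssim1$, and expectation cannot be interchanged with the supremum over the uncountable family of martingales indexed by $x$ (the parenthetical ``direct supremum estimate'' does not exist without, say, a Kolmogorov-continuity argument in $x$). Moreover $\abs{\beta}\le1$ provides too few derivatives for any Sobolev upgrade. The correct repair --- which the paper implements and which your closing paragraph names without carrying out --- is to take all multi-indices $\abs{\beta}\le\floor{d/2}+2$, prove the pointwise \emph{second}-moment bound $\esp{\sup_{t\in[0,T]}\abs{\partial_x^\beta\zeta^\delta(t,x)}^2}\lesssim1$ (your drift estimate must be upgraded from first to second moments, which works since $\gamma>2$ gives $\delta^2\,\esp{\sup_{t\le T}\norm{m^\delta(t)}_E^2}\lesssim\delta^{2-4/\gamma}$), integrate over $x\in\T^d$ to obtain $\esp{\sup_{t}\norm{\zeta^\delta(t)}^2_{H^{\floor{d/2}+2}_x}}\lesssim1$, and only then use the embedding $H^{\floor{d/2}+2}_x\subset\C^1_x$; Markov's inequality at the second power then even yields $\proba{\tau^\delta_\zeta<T}\lesssim\delta^2$, better than your stated $O(\delta)$. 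Note that first-moment pointwise bounds would not suffice for this route, since the $x$-integration requires squares; so the fix is not merely cosmetic but changes which moments you must estimate.
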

The proof of Proposition~\ref{prop:taue_to_infty} is postponed to Section~\ref{sec:proofs}.

Finally, let us state the following a priori estimate in the $\fSet$ norm for $f\edstopped(t)$, in an almost sure sense.
\begin{proposition} \label{prop:L2_bound}
	For $T \in (0,\infty)$, there exists $C(T)\in(0,\infty)$, such that for all $t \in [0,T]$, $\epsilon \in (0,\epsilon_0]$ and $\delta \in (0,\delta_0]$, almost  surely one has
	\begin{equation} \label{eq:L2_bound}
		\norm{f\edstopped(t)}_\fSet^2 + \frac{1}{2\epsilon^2} \int_0^{t \wedge \tau^\delta} \norm{Lf\edstopped(s)}_\fSet^2 ds \leq C(T) \norm{f^{\epsilon,\delta}_0}_\fSet^2.
	\end{equation}
\end{proposition}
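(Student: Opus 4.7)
A direct $\fSet$-energy estimate on~\eqref{eq:SPDE} is not uniform in $\delta$: the source term $\sigma(m^\delta) f$ would produce a factor $\norm{\sigma(m^\delta)}_{L^\infty_x}$ which, on the stopped interval $[0,\tau^\delta]$, is only bounded by $\delta^{-\alpha}$ and thus blows up. To remove this obstruction I would introduce the exponential change of unknown
\[
g^{\epsilon,\delta}(t,x,v) \doteq e^{G^\delta(t,x)} f^{\epsilon,\delta}(t,x,v), \qquad G^\delta(t,x) \doteq \int_0^t \sigma(m^\delta(s,x))\, ds = t \bar\sigma(x) + \delta\, \zeta^\delta(t,x).
\]
The point of the splitting is that although $\sigma(m^\delta)$ is not uniformly bounded, the combination $\delta\, \zeta^\delta$ is controlled on $[0,\tau^\delta]$ by~\eqref{eq:taue_bounds_zeta}, so that $\norm{G\dstopped(t)}_{\C^1_x} \leq T\norm{\bar\sigma}_{\C^1_x} + 1$ uniformly in $\delta$. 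In particular $\norm{f\edstopped}_\fSet$ and $\norm{g\edstopped}_\fSet$ (and likewise $\norm{Lf\edstopped}_\fSet$ and $\norm{Lg\edstopped}_\fSet$) are equivalent up to constants depending only on $T$.

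Since $e^{-G^\delta}$ is independent of $v$, it commutes with the $v$-averaging in the BGK operator, so $L(e^{-G}g) = e^{-G}Lg$. A direct computation then yields the transformed PDE
\[
\partial_t g + A^\epsilon g - g\, c^\epsilon(v) \cdot \nabla_x G = \frac{1}{\epsilon^2} Lg, \qquad c^\epsilon(v) \doteq \epsilon^{-1} a(v) + b(v),
\]
in which the troublesome source $\sigma(m^\delta) f$ has been traded for a drift term whose coefficient involves only the uniformly bounded $\nabla_x G$. Testing against $g/\M$, using the divergence-free structure of transport on the torus to get $(A^\epsilon g, g)_\fSet = 0$, and the standard BGK identity $(Lg, g)_\fSet = -\norm{Lg}_\fSet^2$, one arrives at
\[
\tfrac{1}{2}\tfrac{d}{dt}\norm{g}_\fSet^2 + \tfrac{1}{\epsilon^2}\norm{Lg}_\fSet^2 = \int_{\T^d} \nabla_x G(t,x) \cdot \int_V c^\epsilon(v)\, \frac{g^2(t,x,v)}{\M(v)}\, d\mu(v)\, dx.
\]

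The main obstacle is the $\epsilon^{-1}a(v)$ component of $c^\epsilon$ on the right-hand side: its singular prefactor $\epsilon^{-1}$ must be absorbed into the dissipation $\epsilon^{-2}\norm{Lg}_\fSet^2$. Here I would split $g = \rho_g \M + r$ with $r = -Lg$, and use the centering condition $\int_V a\,\M\, d\mu = 0$ from Assumption~\ref{ass:coefficients} to eliminate the dangerous $\rho_g^2 \int a\M$ piece, leaving
\[
\int_V a(v)\, g^2/\M\, d\mu = 2\rho_g \int_V a(v)\, r\, d\mu + \int_V a(v)\, r^2/\M\, d\mu.
\]
Cauchy--Schwarz and Young's inequality (together with $\norm{\rho_g}_{L^2_x} \leq \norm{g}_\fSet$ and $\norm{r}_\fSet = \norm{Lg}_\fSet$) then bound the $a$-contribution by $\tfrac{1}{4\epsilon^2}\norm{Lg}_\fSet^2 + C(T)\norm{g}_\fSet^2$, provided $\epsilon\norm{a}_{L^\infty}\norm{\nabla_x G}_{L^\infty} \leq 1/4$. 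This is exactly what the smallness condition~\eqref{eq:def_epsilon_0} on $\epsilon_0$ is calibrated to ensure, combined with the uniform bound on $\norm{\nabla_x G}_{L^\infty}$ above. The $b(v)$-contribution carries no $1/\epsilon$ factor; its only non-dissipative piece is $\int \nabla_x G \cdot J \rho_g^2\, dx \lesssim \norm{g}_\fSet^2$, and the remaining pieces are treated by analogous but easier Young inequalities.

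Collecting these estimates produces a differential inequality
\[
\tfrac{d}{dt}\norm{g\edstopped(t)}_\fSet^2 + \tfrac{1}{\epsilon^2}\norm{Lg\edstopped(t)}_\fSet^2 \lesssim \norm{g\edstopped(t)}_\fSet^2,
\]
with implicit constant depending only on $T$. Gronwall's lemma and the equivalence between the $g$-norms and the $f$-norms on $[0,\tau^\delta]$ then yield~\eqref{eq:L2_bound}. The entire argument hinges on the observation that, although $\sigma(m^\delta)$ cannot be controlled uniformly, its time primitive $G^\delta = t\bar\sigma + \delta\zeta^\delta$ can, thanks to the definition of $\tau^\delta_\zeta$.
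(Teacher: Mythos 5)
Your proposal is correct and is essentially the paper's own proof in disguise: the change of unknown $g^{\epsilon,\delta}=e^{G^\delta}f^{\epsilon,\delta}$ is identical to the paper's time-dependent weighted norm $\norm{f^{\epsilon,\delta}(t)}_{L^2(\M^\delta(t)^{-1})}$ with weight $\M^\delta(t,x,v)=e^{-2\eta^\delta(t,x)}\M(v)$, where your $G^\delta$ is the paper's $\eta^\delta=t\bar\sigma+\delta\zeta^\delta$, controlled in $\C^1_x$ on $[0,\tau^\delta\wedge T]$ exactly as you argue. All the key steps coincide: the weight cancels the source term, the centering of $a$ combined with $f=\rho\M-Lf$ removes the singular $\rho^2$ contribution (leaving the $\epsilon J$ term), the remaining $\epsilon^{-1}$ transport pieces are absorbed into the dissipation using the smallness condition~\eqref{eq:def_epsilon_0}, and Gronwall plus the norm equivalence concludes.
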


Let us emphasize that the constant $C(T)$ appearing on the right-hand side of~\eqref{eq:L2_bound} is deterministic and does not depend on $\epsilon$ and $\delta$.

The proof of Proposition~\ref{prop:L2_bound} is posponed to Section~\ref{sec:proofs}. Note that the a priori estimate $\norm{f\edstopped(t)}_\fSet\lesssim \norm{f^{\epsilon,\delta}_0}_\fSet$ is instrumental in all the arguments of the proof of Theorem~\ref{thm:main_Hneg}, whereas the upper bound for the integral term on the left-hand side of~\eqref{eq:L2_bound} is used only in the proof of Theorem~\ref{thm:main_L2}.

\subsection{Perturbed test functions}\label{sec:ptf}

In this section, we describe the construction of a perturbed test function $\varphi^{\epsilon,\delta}$, such that the following properties are satisfied:
\begin{align*}
\varphi^{\epsilon,\delta}&\xrightarrow[\epsilon,\delta \to 0]{} \varphi\\
\L^{\epsilon,\delta}\varphi^{\epsilon,\delta}&\xrightarrow[\epsilon,\delta \to 0]{} \L\varphi,
\end{align*}

where $\varphi$ is any sufficiently smooth function such that $\varphi(f,\n)=\varphi(\lrangle f)$, and $\L^{\epsilon,\delta}$ and $\L$ are the infinitesimal generators associated with the Markov processes $(f^{\epsilon,\delta},m^\delta)$ and $\bar\rho$, solving~\eqref{eq:SPDE} and~\eqref{eq:limit_equation} respectively.

To state more rigorously and more precisely the properties mentioned above, we first introduce two appropriate classes of test functions, such that $\varphi\in\Theta_{\lim}$ and $\varphi^{\epsilon,\delta}\in\Theta$ for all $\epsilon,\delta$, and such that the errors $|\varphi^{\epsilon,\delta}-\varphi|$ and $|\L^{\epsilon,\delta}\varphi^{\epsilon,\delta}-\L\varphi|$ are quantified in terms of $\epsilon,\delta$.

Let us first describe the class of functions $\Theta_{\lim}$.
\begin{definition} \label{def:Theta-lim}
	Let $\Theta_{\lim}$ be the class of real-valued test functions $\phi$ such that for all $f\in\fSet$ and $\n\in E$, one has
	\begin{equation}\label{eq:phi-Theta-lim}
		\phi(f,\n) = \phi(\rho) = \chi(\lrpar{\rho,\xi}_{L^2_x}),
	\end{equation}
where we recall that $\rho = \lrangle f = \int_V f d\mu$, with arbitrary $\chi \in \C^3_b(\R,\R)$ and $\xi \in \C^3_x$.
\end{definition}
Recall that $K$, $J$ and $\bar\sigma$ are defined by~\eqref{eq:def-KJsigma}. The infinitesimal generator $\L$ associated with the limit problem~\eqref{eq:limit_equation} is defined by
	\begin{equation} \label{eq:def_L}
		\L \phi(\rho) = D \phi (\rho) \cdot \lrpar{\div_x(K\nabla_x \rho) - J \cdot \nabla_x \rho - \bar \sigma \rho},
	\end{equation}
for all $\rho\in L_x^2$ and all $\varphi\in D(\L)$, with the domain $D(\L)$ given by
\[
D(\L) = \set{\phi \in \C^0(L^2_x) \mid \L \phi \in \C^0(L^2_x)}.
\]
Note that $\Theta_{\lim}\subset D(\L)$. In addition, for all $\varphi\in \Theta_{\lim}$, of the form~\eqref{eq:phi-Theta-lim}, one has for all $\rho\in L_x^2$
\begin{equation}\label{eq:Lrho-theta}
    \L\phi(\rho)= \chi'(\lrpar{\rho,\xi}_{L^2_x}) \lrpar{ \rho , \div_x(K\nabla_x \xi) + J \cdot \nabla_x \xi - \bar \sigma \xi }_{L^2_x}.
\end{equation}

Let us now describe the class of functions $\Theta$.
\begin{definition} \label{def:Theta}
	Let $\Theta$ be the class of test functions $\phi : \fSet \times E \to \R$ satisfying the following conditions.
	\begin{itemize}
		\item For all $\n\in E$, $\phi(\cdot,\n) \in \C^1(\fSet)$.
		\item For all $f \in \fSet$, $\phi(f,\cdot) \in \C^0(E)$.
		\item For all $i \in \set{1,2}$ and all $f \in \fSet$, $\phi(f,\cdot)^i \in D(\L_m)$ and $\L_m(\phi^i) \in \C^0(\fSet \times E)$.
		\item For all $f \in \fSet$ and $\n \in E$, denote by $\nabla_f \phi(f,\n) \in \fSet$ the gradient of $\phi$ at $(f,\n)$, which is defined defined such that
		\begin{equation*}
			\forall h \in \fSet, \lrpar{\nabla_f \phi(f,\n),h}_{\fSet} = D_f \phi(f,\n) \cdot h.
		\end{equation*}
		Then, for all $f \in \fSet$ and $\n\in E$, one has
		\begin{equation*} %\label{eq:Theta_grad_H1}
			\int\int \norm{\nabla_x \nabla_f \phi(f,\n)(x,v)}^2 dx \frac{d\mu(v)}{\M(v)} < \infty.
		\end{equation*}
		\item There exists $C_\phi (0,\infty)$ such that, for all $f, h \in \fSet$ and $\n_1, \n_2 \in E$,
		\begin{multline*} %\label{eq:estimates_Theta}
			\abs{\phi(f,\n_1)} + \abs{D_f \phi(f,\n_1)(Ah)} + \abs{D_f \phi(f,\n_1)(Bh)} + \abs{D_f \phi(f,\n_1)(\n_2 h)} + \abs{D_f \phi(f,\n_1)(Lh)}\\
			\leq C_\phi \lrpar{1 + \norm{f}_\fSet^3 + \norm{h}_\fSet^3} \lrpar{1 + \norm{\n_1}_E^2 + \norm{\n_2}_E^2}.
		\end{multline*}
	\end{itemize}
\end{definition}

The infinitesimal generator $\L^{\epsilon,\delta}$ associated with the stochastic problem~\eqref{eq:SPDE} with driving process $m^\delta$, satisfies the following multiscale expansion in terms of $\epsilon,\delta$:
\begin{equation}
	\L^{\epsilon,\delta} = \L_0 + \epsilon^{-1} \L_1 + \epsilon^{-2} \L_2 + \delta^{-2} \L_m
\end{equation}
where the infinitesimal generator of the driving process $\L_m$ is introduced in Assumption~\ref{ass:m} above, and $\L_0$, $\L_1$ and $\L_2$ are defined as follows: for all $f\in\fSet$ and all $\n\in E$, set
\begin{gather*}
	\L_0 \phi(f,\n) = - D_f \phi(f,\n) \cdot (\sigma(\n) f + Bf),\\
	\L_1 \phi(f,\n) = - D_f \phi(f,\n) \cdot (Af),\\
	\L_2 \phi(f,\n) = D_f \phi(f,\n) \cdot Lf,
\end{gather*}
for real-valued functions $\varphi\in D(\L^{\epsilon,\delta})$ where
\[
D(\L^{\epsilon,\delta})=\set{\phi \in \C^0(\fSet\times E) \mid \L^{\epsilon,\delta} \phi \in \C^0(\fSet\times E)}
\]
is the domain of the unbounded linear operator $\L^{\epsilon,\delta}$. Recall that $A=a(v)\cdot \nabla_x$ and $B=b(v)\cdot\nabla_x$ (see Assumption~\ref{ass:coefficients}).

Note that the following property is satisfied: for all $\varphi\in\Theta$, one has $\varphi\in D(\L^{\epsilon,\delta})$ and $\varphi^2\in D(\L^{\epsilon,\delta})$. In addition, observe that $\Theta_{\lim}\subset\Theta$.

We are now in position to state the main result of this section.
\begin{proposition} \label{prop:corrector}
For all $\phi \in \Theta_{\lim}$, there exists functions $\phi_{1,0}$, $\phi_{2,0}$, $\phi_{0,2}$ and $\phi_{1,2}$ such that the following properties hold.
\begin{itemize}
\item Set
	\begin{equation} \label{eq:def_phi_epsilon}
		\phi^{\epsilon,\delta} = \phi + \epsilon \phi_{1,0} + \epsilon^2 \phi_{2,0} + \delta^2 \phi_{0,2} + \epsilon \delta^2 \phi_{1,2}.
	\end{equation}
Then $\phi^{\epsilon,\delta} \in \Theta$.
\item There exists $C(\varphi)\in(0,\infty)$, such that for all $\epsilon\in(0,\epsilon_0]$, $\delta\in(0,\delta_0]$ and all $(f,\n)\in\fSet\times E$, one has
\begin{equation}
		\begin{aligned} \label{eq:control_perturbed_generator}
		\abs{\L^{\epsilon,\delta} \phi^{\epsilon,\delta} - \L \phi}(f,\n)
			&\leq C(\varphi)  (1 + \norm{f}_\fSet^3)\Bigl( \epsilon(1 + \norm{\n}_E)+\delta^2 (1 + \norm{\n}_E^2)\Bigr).
		\end{aligned}
\end{equation}
\item One has the following upper bounds: there exists $C(\varphi)\in(0,\infty)$, such that for all $(f,\n)\in\fSet\times E$,
	\begin{gather}
		\label{eq:control_phi_10}
		\abs{\phi_{1,0}(f)} \leq C(\varphi) \norm{f}_\fSet,\\
		\label{eq:control_phi_20}
		\abs{\phi_{2,0} (f)} \leq C(\varphi) (1 + \norm{f}_\fSet^2),\\
		\label{eq:control_phi_02}
		\abs{\phi_{0,2} (f,\n)} \leq C(\varphi) \norm{f}_\fSet (1 + \norm{\n}_E),\\
		\label{eq:control_phi_12}
		\abs{\phi_{1,2} (f,\n)} \leq C(\varphi) (1 + \norm{f}_\fSet^2) (1 + \norm{\n}_E).
		\end{gather}
\end{itemize}
\end{proposition}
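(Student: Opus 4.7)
My plan is to determine each corrector by matching coefficients in the formal asymptotic expansion of $\L^{\epsilon,\delta}\phi^{\epsilon,\delta}$. Substituting~\eqref{eq:def_phi_epsilon} into the decomposition $\L^{\epsilon,\delta}=\L_0+\epsilon^{-1}\L_1+\epsilon^{-2}\L_2+\delta^{-2}\L_m$, I will first observe that for any $\phi\in\Theta_{\lim}$ the leading singular term $\epsilon^{-2}\L_2\phi$ vanishes because $\lrangle{Lf}=0$, and that $\L_m$ annihilates every corrector chosen to be independent of $\n$. The residual singular terms to be cancelled are therefore: $\epsilon^{-1}\L_1\phi$; the $\n$-dependent part $-D_f\phi\cdot\sigma(\n)f$ of $\L_0\phi$, which must be averaged to $-D_f\phi\cdot\bar\sigma f$; the non-diffusive remainder of $\L_1\phi_{1,0}$ at order $\epsilon^0\delta^0$; and a mixed term at order $\epsilon^{-1}\delta^2$ that will require the novel corrector $\phi_{1,2}$.

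For the standard correctors I use two inversion tools. The elementary identity $\lrpar{Lf,\theta}_{L^2}=\lrpar{f,\int_V\M(v)\theta(\cdot,v)d\mu(v)-\theta}_{L^2}$, combined with the centering condition $\int_V a(v)\M(v)d\mu(v)=0$, shows that the choice $\phi_{1,0}(f)=\chi'(\lrpar{\rho,\xi}_{L^2_x})\lrpar{f,a\cdot\nabla_x\xi}_{L^2}$ satisfies $\L_2\phi_{1,0}=-\L_1\phi$. The resolvent $R_0$ from Definition~\ref{def:resolvent}, applied to $\theta_f(\n)\doteq\chi'(\lrpar{\rho,\xi}_{L^2_x})\lrpar{\rho,\sigma(\n)\xi}_{L^2_x}$, provides $\phi_{0,2}(f,\n)\doteq-R_0(\theta_f-\bar\theta_f)(\n)$, for which $\L_m\phi_{0,2}(f,\cdot)(\n)=\theta_f(\n)-\bar\theta_f$, exactly the correction needed to turn $\sigma(\n)$ into $\bar\sigma$. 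I will then compute $\L_1\phi_{1,0}$ explicitly and use the decomposition $\int_Vf(x,v)a_i(v)a_j(v)d\mu(v)=\rho(x)K_{ij}-\int_V Lf(x,v)a_i(v)a_j(v)d\mu(v)$, together with the analogous identity for $b$, to reveal that the outstanding terms at order $\epsilon^0\delta^0$ are cancelled by a $\phi_{2,0}$ combining a quadratic piece $\tfrac12\chi''(\lrpar{\rho,\xi}_{L^2_x})\lrpar{f,a\cdot\nabla_x\xi}_{L^2}^2$ with a linear piece $\chi'(\lrpar{\rho,\xi}_{L^2_x})\lrpar{f,\theta_2}_{L^2}$, where $\theta_2(x,v)\doteq\sum_{i,j}a_i(v)a_j(v)\partial_i\partial_j\xi(x)+b(v)\cdot\nabla_x\xi(x)$.

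The genuinely new and hardest step is the construction of $\phi_{1,2}$: without it, the term $\epsilon^{-1}\delta^2\L_1\phi_{0,2}$ produced by inserting $\phi_{0,2}$ into the $\L_1$ part of the generator is of size $\delta^2/\epsilon$, which is not subordinate to the target remainder $\epsilon+\delta^2$. To eliminate it I solve, for every fixed $\n\in E$, the equation $\L_2\phi_{1,2}(\cdot,\n)=-\L_1\phi_{0,2}(\cdot,\n)$. Differentiating $\phi_{0,2}$ in $f$ via the chain rule on $R_0$ shows that $\L_1\phi_{0,2}(f,\n)$ is itself of the form $R_0$ applied in $\n$ to a functional affine in $f$; the same $L$-inversion recipe that produced $\phi_{1,0}$ then yields $\phi_{1,2}(f,\n)$, of the form of an $L^2$ inner product in $f$ wrapped in an outer $R_0$ in $\n$. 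No further correctors are needed: every remaining contribution to $\L^{\epsilon,\delta}\phi^{\epsilon,\delta}-\L\phi$ is of the form $\L_0$ or $\L_1$ applied to one of the five constructed pieces.

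Finally, I bound the remainder $\epsilon\L_0\phi_{1,0}+\epsilon^2\L_0\phi_{2,0}+\delta^2\L_0\phi_{0,2}+\delta^2\L_1\phi_{1,2}+\epsilon\delta^2\L_0\phi_{1,2}$ pointwise. Since $\L_0$ carries a factor $\sigma(\n)$ contributing one power of $\norm{\n}_E$, and each application of $R_0$ contributes another through the growth bound~\eqref{eq:estimate_resolvent}, the worst remainder is $\delta^2(1+\norm{\n}_E^2)$, matching~\eqref{eq:control_perturbed_generator}; the individual bounds~\eqref{eq:control_phi_10}--\eqref{eq:control_phi_12} are read off directly from the explicit formulas. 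The only nontrivial regularity check for membership in $\Theta$ is $(\phi^{\epsilon,\delta})^2\in D(\L_m)$ with polynomial growth, which is precisely where Assumption~\ref{ass:regular_Lm} applies to the products of resolvents appearing in $\phi_{0,2}$ and $\phi_{1,2}$. The main technical obstacle will be bookkeeping the compounded use of $R_0$ and $f$-derivatives when computing $\phi_{1,2}$, but the linearity of $\theta_f$ in $\rho$ keeps each step tractable.
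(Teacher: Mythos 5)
Your construction is, in substance, the paper's own proof: the same multiscale expansion of $\L^{\epsilon,\delta}\phi^{\epsilon,\delta}$ in powers of $\epsilon,\delta$, the same two inversion tools (the $\L_2$-Poisson equation solved along the relaxation flow, and the $\L_m$-Poisson equation solved via the resolvent $R_0$ of Definition~\ref{def:resolvent}), and the same explicit correctors: your $\phi_{1,0}$, $\phi_{0,2}$ and $\phi_{1,2}$ agree with~\eqref{eq:def_phi_10},~\eqref{eq:def_phi_02} and~\eqref{eq:def_phi_12} (your $-R_0(\theta_f-\bar\theta_f)(\n)$ equals $-\chi'_\rho\lrpar{\rho,R_0(\n)\xi}_{L^2_x}$ by self-adjointness of $R_0(\n)$), while your $\phi_{2,0}$ differs from~\eqref{eq:def_phi_20} only by the additive term $\chi'_\rho\lrpar{\rho\M,A^2\xi+B\xi}_{L^2}$, a function of $\rho$ alone; since $\lrangle{Lf}=0$, $\L_2$ annihilates this difference, so your $\phi_{2,0}$ solves the same Poisson equation and the discrepancy only feeds admissible $O(\epsilon)$ and $O(\epsilon^2)$ contributions through $\L_1$ and $\L_0$. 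This is harmless.

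There is, however, a concrete gap in your final accounting of the remainder, and the sentence ``every remaining contribution to $\L^{\epsilon,\delta}\phi^{\epsilon,\delta}-\L\phi$ is of the form $\L_0$ or $\L_1$ applied to one of the five constructed pieces'' is false. Expanding $\lrpar{\L_0+\epsilon^{-1}\L_1+\epsilon^{-2}\L_2+\delta^{-2}\L_m}\lrpar{\phi+\epsilon\phi_{1,0}+\epsilon^2\phi_{2,0}+\delta^2\phi_{0,2}+\epsilon\delta^2\phi_{1,2}}$, the residual at order $\epsilon$ is $\L_0\phi_{1,0}+\L_1\phi_{2,0}+\L_m\phi_{1,2}$, so your list $\epsilon\L_0\phi_{1,0}+\epsilon^2\L_0\phi_{2,0}+\delta^2\L_0\phi_{0,2}+\delta^2\L_1\phi_{1,2}+\epsilon\delta^2\L_0\phi_{1,2}$ omits both $\epsilon\L_1\phi_{2,0}$ and $\epsilon\L_m\phi_{1,2}$. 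The latter is generated precisely by the new corrector you emphasize ($\epsilon\delta^2\phi_{1,2}$ hit by $\delta^{-2}\L_m$), and bounding it requires checking that $\phi_{1,2}(f,\cdot)\in D(\L_m)$ with $\L_m\phi_{1,2}$ of at most linear growth in $\norm{\n}_E$; this follows from $\L_m R_0(\theta-\bar\theta)=-(\theta-\bar\theta)$ applied to the parts of $\phi_{1,2}$ linear in $R_0(\n)\xi$, as in the paper's estimate~\eqref{eq:aim_corrector23_epsilon}, but it does not come for free. The term $\epsilon\L_1\phi_{2,0}$ is where the third derivatives $A^3\xi$ and $AB\xi$ of the test function enter, which is exactly why $\Theta_{\lim}$ requires $\xi\in\C^3_x$. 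Two further points should be made explicit rather than tacit: first, the most singular residual $\epsilon^{-2}\delta^2\L_2\phi_{0,2}$, which is unbounded in the regime $\delta\gg\epsilon$, vanishes only because your $\phi_{0,2}$ depends on $f$ through $\rho$ alone (so $D_f\phi_{0,2}\cdot Lf=0$, again by $\lrangle{Lf}=0$); second, the pairing $\lrpar{f,A(R_0(\n)\xi)}_{L^2}$ inside $\phi_{1,2}$ requires the regularity transfer $R_0(\n):H^1_x\to H^1_x$ with norm $O(1+\norm{\n}_E)$ (the paper's claim $(iii)$ on $R_0(\n)$), since $\lrangle{Af}$ only lies in $H^{-1}_x$ for $f\in\fSet$. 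All of these repairs use the tools you already have, and once made, your argument coincides with the paper's proof of~\eqref{eq:control_perturbed_generator} and of the bounds~\eqref{eq:control_phi_10}--\eqref{eq:control_phi_12}.
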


Note that the following error estimates holds: for all $(f,\n)\in \fSet\times E$, one has
\begin{equation}\label{eq:control_error_phi}
\abs{\varphi^{\epsilon,\delta}(f,\n)-\varphi(\lrangle f)}\lesssim_\varphi (1 + \norm{f}_\fSet^2) (\epsilon + \delta^2 (1 + \norm{\n}_E)).
\end{equation}

The proof of Proposition~\ref{prop:corrector} is postponed to Section~\ref{sec:proofs}. Using the standard terminology, the functions $\varphi_{1,0}$, $\varphi_{2,0}$, $\varphi_{0,2}$ and $\varphi_{1,2}$ are referred to as the correctors in the sequel.

Note that one of the novelties of the result above is the construction of the corrector $\varphi_{1,2}$, which is not required in the case $\epsilon=\delta$ which is treated in other contributions, see~\cite{debussche2011diffusion,debussche2017diffusion,debussche2020diffusion,rakotonirina2020diffusion}. More precisely, if $\epsilon,\delta$, it suffices to construct a perturbed test function of the type $\varphi^{\epsilon}=\varphi+\epsilon\varphi_1+\epsilon^2\varphi_2$. In fact, $\varphi_1=\varphi_{1,0}$ and $\varphi_2=\varphi_{2,0}+\varphi_{0,2}$.

\subsection{Martingale property}

The proofs of Theorems~\ref{thm:main_Hneg} and~\ref{thm:main_L2} is based on an interpretation in terms of solutions of martingale problems. Indeed, combined with the perturbed test function approach described above, this formulation is convenient to identify limit points when $(\epsilon,\delta) \to (0,0)$.

Several arguments in the proofs of the auxiliary results below employ the following auxiliary result.

\begin{proposition} \label{prop:martingale_formulation_epsilon}
	Let $\phi \in \Theta$. For all $t \in \R^+$, set
	\begin{equation} \label{eq:martingale_formulation_epsilon}
		M_\phi^{\epsilon,\delta}(t) \doteq \phi(f^{\epsilon,\delta}(t),m^\delta(t)) - \phi(f^{\epsilon,\delta}(0),m^\delta(0)) - \int_0^t \L^{\epsilon,\delta} \phi (f^{\epsilon,\delta}(s),m^\delta(s)) ds.
	\end{equation}

	Then, $M\edstopped_\phi$ is a càdlàg $\seq{\mathcal F^\delta_t}{t \in \R^+}$-martingale. In addition, for all $t \in \R^+$, one has
	\begin{align*}
		\esp{\abs{M\edstopped_\phi(t)}^2}
			&= \esp{\int_0^{t \wedge \tau^\delta} \lrpar{\L^{\epsilon,\delta} (\phi^2) - 2 \phi \L^{\epsilon,\delta} \phi}(f^{\epsilon,\delta}(s),m^\delta(s))ds}\\
			&= \frac{1}{\delta^2} \esp{\int_0^{t \wedge \tau^\delta} \lrpar{\L_m (\phi^2) - 2 \phi \L_m \phi}(f^{\epsilon,\delta}(s),m^\delta(s))ds}.
	\end{align*}
\end{proposition}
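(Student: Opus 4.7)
The plan is to view $(f^{\epsilon,\delta}(t), m^\delta(t))_{t\ge 0}$ as a Markov process on $\fSet \times E$ with infinitesimal generator $\L^{\epsilon,\delta} = \L_0 + \epsilon^{-1}\L_1 + \epsilon^{-2}\L_2 + \delta^{-2}\L_m$, and to derive both assertions as standard Dynkin-type identities. For $\phi\in\Theta$, I would first establish that $M_\phi^{\epsilon,\delta}$ is a local martingale by splitting the increment of $\phi(f^{\epsilon,\delta}(t), m^\delta(t))$ into a deterministic PDE contribution and a stochastic contribution from $m^\delta$. Conditioning on the path of $m^\delta$ and using the mild formulation of~\eqref{eq:SPDE} from Proposition~\ref{prop:well-posed} together with $\phi(\cdot,\n)\in\C^1(\fSet)$ in Definition~\ref{def:Theta}, one identifies $\partial_t \phi(f^{\epsilon,\delta}(t),\n) = (\L_0 + \epsilon^{-1}\L_1 + \epsilon^{-2}\L_2)\phi(f^{\epsilon,\delta}(t),\n)$ whenever $m^\delta$ is frozen at $\n$. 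The dependence on $m^\delta(t)=m(t/\delta^2)$ is then handled via the martingale problem for the Markov process $m$, whose generator is $\L_m$ and which, after the time-change $t\mapsto t/\delta^2$, yields the $\delta^{-2}\L_m$ contribution; the hypothesis $\phi(f,\cdot)\in D(\L_m)$ in Definition~\ref{def:Theta} is exactly what is needed at this step.

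To upgrade the stopped process $M\edstopped_\phi$ to a true square-integrable martingale, I would combine the a priori bound $\norm{f\edstopped(t)}_\fSet^2\lesssim \norm{f^{\epsilon,\delta}_0}_\fSet^2$ from Proposition~\ref{prop:L2_bound} with the stopping time bounds $\norm{m\dstopped(t)}_E\leq \delta^{-\alpha}$ from~\eqref{eq:taue_bounds_m}. Applying the polynomial growth estimate of Definition~\ref{def:Theta} to the terms $D_f\phi\cdot Af$, $D_f\phi\cdot Bf$, $D_f\phi\cdot Lf$ and $D_f\phi\cdot(\n f)$, one checks that $\L^{\epsilon,\delta}\phi(f\edstopped(s), m\dstopped(s))$ is bounded on $[0,t\wedge\tau^\delta]$ by a deterministic constant depending only on $\epsilon,\delta,t$ and $\norm{f^{\epsilon,\delta}_0}_\fSet$. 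This integrability, together with the optional stopping theorem applied along a localizing sequence, yields the martingale property.

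For the variance identity, the classical carré du champ formula states that $(M^{\epsilon,\delta}_\phi)^2 - \int_0^\cdot (\L^{\epsilon,\delta}(\phi^2) - 2\phi\L^{\epsilon,\delta}\phi)(f^{\epsilon,\delta},m^\delta)\,ds$ is itself a local martingale, provided $\phi^2$ lies in the domain of $\L^{\epsilon,\delta}$. The latter is precisely ensured by the requirement $\phi(f,\cdot)^2\in D(\L_m)$ in Definition~\ref{def:Theta} combined with the product rule for $\C^1$ functions on $\fSet$. Stopping at $\tau^\delta$, applying the same integrability arguments and taking expectation at time $t$ gives the first equality. The second equality then follows from the observation that $\L_0$, $\L_1$ and $\L_2$ are first-order derivations in the $f$ variable, each of the form $\phi\mapsto D_f\phi(f,\n)\cdot G_i(f,\n)$, so the Leibniz rule $\L_i(\phi^2)=2\phi\L_i\phi$ holds pointwise for $i\in\{0,1,2\}$; only the $\delta^{-2}(\L_m(\phi^2)-2\phi\L_m\phi)$ term survives in the carré du champ.

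The main obstacle is the rigorous justification of the first step, since $f^{\epsilon,\delta}$ is only a mild solution and the chain rule $\partial_t\phi(f^{\epsilon,\delta},\n) = D_f\phi\cdot\partial_t f^{\epsilon,\delta}$ is not literally applicable. The cleanest workaround is to regularize $f^{\epsilon,\delta}$ via the semigroup $e^{-tA^\epsilon}$ (yielding smooth approximants for which the classical computation applies), derive Dynkin's formula for the approximants, and pass to the limit using the continuity of $D_f\phi$ on $\fSet$ and the uniform growth bounds of Definition~\ref{def:Theta}.
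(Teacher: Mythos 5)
The paper gives no proof of this proposition---it is declared standard and omitted---and your proposal is exactly the standard argument the authors have in mind: Dynkin's formula for the joint Markov process $(f^{\epsilon,\delta},m^\delta)$ with generator $\L^{\epsilon,\delta}=\L_0+\epsilon^{-1}\L_1+\epsilon^{-2}\L_2+\delta^{-2}\L_m$, stopping at $\tau^\delta$ so that the a priori bound of Proposition~\ref{prop:L2_bound} and the cutoff $\norm{m\dstopped(t)}_E\le\delta^{-\alpha}$ of~\eqref{eq:taue_bounds_m} yield the integrability needed for a true square-integrable martingale, then the carr\'e du champ identity for the variance. Your closing observation is the right one and is where the content of the second equality lies: $\L_0,\L_1,\L_2$ are first-order derivations in the $f$ variable, so $\L_i(\phi^2)=2\phi\,\L_i\phi$ pointwise for $i\in\{0,1,2\}$ and only the $\delta^{-2}\bigl(\L_m(\phi^2)-2\phi\L_m\phi\bigr)$ term survives; the domain condition $\phi^2\in D(\L^{\epsilon,\delta})$ for $\phi\in\Theta$ is recorded in the paper just before Proposition~\ref{prop:corrector}, as you note.

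One concrete misstep in your final paragraph: the semigroup $e^{-tA^\epsilon}$ is the free-transport group, $e^{-tA^\epsilon}f(x,v)=f\bigl(x-t(\epsilon^{-1}a(v)+b(v)),v\bigr)$, a group of isometries on $\fSet$ with no smoothing effect whatsoever, so ``regularizing via $e^{-tA^\epsilon}$'' produces approximants exactly as rough as $f^{\epsilon,\delta}$ itself and the proposed workaround fails as written. Two standard repairs are available. Either mollify in $x$: convolution in $x$ commutes with this transport group (it is a family of $x$-translations for each fixed $v$) and with $L$, and the commutator with multiplication by $\sigma(m^\delta(s))$ is controlled on $[0,\tau^\delta]$ since $\sigma(m^\delta(s))\in\C^1_x$ there. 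Or avoid regularization altogether: the hypotheses defining $\Theta$ (in particular the finiteness of $\iint\norm{\nabla_x\nabla_f\phi(f,\n)(x,v)}^2\,dx\,d\mu(v)/\M(v)$ and the growth bound on $D_f\phi(f,\n)(Ah)$ for all $h\in\fSet$) are precisely what is needed to give $D_f\phi(f,\n)(Af)$ a meaning for every $f\in\fSet$ by integration by parts, so Dynkin's formula can be derived directly from the weak/mild formulation by a Riemann-sum argument, using continuity of $D_f\phi$ and the growth bounds to pass to the limit. With that repair your proof is complete and coincides with the standard proof the paper omits.
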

In the statement of Proposition~\ref{prop:martingale_formulation_epsilon} above, note that the process $M^{\epsilon,\delta}_\phi$ is stopped, where the stopping time $\tau^{\delta}$ is given by Definition~\ref{def:taue}. Considering the stopped process allows us to use the estimate~\eqref{eq:L2_bound} of Proposition~\ref{prop:L2_bound} in the sequel. Moreover, note that $\phi$ is assumed to belong to the class of functions $\Theta$ introduced in Definition~\ref{def:Theta}: in fact Proposition~\ref{prop:martingale_formulation_epsilon} is the justification of the requirements on $\phi$ in Definition~\ref{def:Theta}.

The proof of Proposition~\ref{prop:martingale_formulation_epsilon} is standard and is omitted.

\subsection{Proofs of Theorems~\ref{thm:main_Hneg} and~\ref{thm:main_L2}}

In order to prove the convergence in distribution results of $\rho^{\epsilon,\delta}$ to $\bar\rho$ when $(\epsilon,\delta)\to(0,0)$, it suffices to prove that for any arbitrary sequence $\seq{\epsilon_i,\delta_i}{i\ge 1}$ such that $(\epsilon_i,\delta_i)\to (0,0)$ when $i\to \infty$, $\rho^{\epsilon_i,\delta_i}$ converges in distribution to $\bar\rho$. To simplify the notation, we fix such a sequence and in the sequel one should interpret $\epsilon=\epsilon_i$ and $\delta=\delta_i$. Moreover, in Proposition~\ref{prop:limit_martingale} and in the proofs, $\seq{\epsilon_i,\delta_i}{i\ge 1}$ may also denote a subsequence of the original sequence.

\subsubsection{Two technical results}

Two additional technical results are required for the proof of the main results of this article.

First, let us state a tightness result.

\begin{proposition} \label{prop:tightness}
	Let Assumptions \ref{ass:coefficients} to \ref{ass:f0} be satisfied.

	The family of processes $\seq{\rho^{\epsilon,\delta}}{\epsilon,\delta}$ is tight in the space $\C^0_T H^{-\varsigma}_x$, for all arbitrarily small $\varsigma \in (0,1]$.
	
	Moreover, if Assumption~\ref{ass:avlemma} is also satisfied, then the family of processes $\seq{\rho^{\epsilon,\delta}}{\epsilon,\delta}$ is tight in the space $L^2_T L^2_x$.
\end{proposition}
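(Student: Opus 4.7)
The plan is to prove tightness first for the stopped family $\rho\edstopped$ and then transfer the conclusion to $\rho^{\epsilon,\delta}$ using Proposition~\ref{prop:taue_to_infty}: since $\proba{\tau^\delta<T}\to 0$, the two processes coincide on $[0,T]$ with probability tending to $1$, so Slutsky's lemma transfers tightness. All the estimates rest on the two uniform bounds
\[
\sup_{\epsilon,\delta}\E\Bigl[\sup_{t\in[0,T]}\norm{\rho\edstopped(t)}_{L^2_x}^2\Bigr]<\infty,\qquad \sup_{\epsilon,\delta}\E\Bigl[\frac{1}{\epsilon^2}\int_0^{T\wedge\tau^\delta}\norm{Lf\edstopped(s)}_\fSet^2 ds\Bigr]<\infty,
\]
which follow from Proposition~\ref{prop:L2_bound}, Assumption~\ref{ass:f0} and the embedding $\norm{\rho}_{L^2_x}\le\norm{f}_\fSet$.

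For tightness in $\C^0_T H^{-\varsigma}_x$, the first bound combined with the compact embedding $L^2_x\subset H^{-\varsigma}_x$ yields tightness of the marginals $\{\rho\edstopped(t)\}_{\epsilon,\delta}$ in $H^{-\varsigma}_x$. To obtain a uniform time-modulus I would test~\eqref{eq:SPDE} against $\xi\in H^1_x$, integrate in velocity using $\lrangle{Lf}=0$, and exploit the centering $\int a\M d\mu=0$ to rewrite $\lrangle{af^{\epsilon,\delta}}=-\lrangle{aLf^{\epsilon,\delta}}$; this gives, for $0\le s<t\le T$,
\[
\lrpar{\rho\edstopped(t)-\rho\edstopped(s),\xi}_{L^2_x}=\int_{s\wedge\tau^\delta}^{t\wedge\tau^\delta}\Bigl(\tfrac{1}{\epsilon}\lrpar{\lrangle{aLf^{\epsilon,\delta}},\nabla_x\xi}_{L^2_x}+\lrpar{\lrangle{bf^{\epsilon,\delta}},\nabla_x\xi}_{L^2_x}-\lrpar{\sigma(m^\delta)\rho^{\epsilon,\delta},\xi}_{L^2_x}\Bigr)d\tau.
\]
By Cauchy--Schwarz in time, the singular $\epsilon^{-1}$ in the first term is absorbed using the second uniform bound above, yielding a contribution of order $|t-s|^{1/2}\norm{\xi}_{H^1_x}$; the remaining two terms contribute $|t-s|\norm{\xi}_{H^1_x}$, with $\sigma(m^\delta)$ controlled in moments via Assumption~\ref{ass:moments_m_gamma}. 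Taking the supremum over the unit ball of $H^1_x$ gives $\norm{\rho\edstopped(t)-\rho\edstopped(s)}_{H^{-1}_x}\lesssim |t-s|^{1/2}$ in $L^2(\Omega)$, and interpolating $\norm{\cdot}_{H^{-\varsigma}_x}\lesssim\norm{\cdot}_{L^2_x}^{1-\varsigma}\norm{\cdot}_{H^{-1}_x}^\varsigma$ then produces a Hölder-$\varsigma/2$ modulus in $H^{-\varsigma}_x$; Aldous' criterion concludes this part for every $\varsigma\in(0,1]$.

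For the stronger tightness in $L^2_T L^2_x$ under Assumption~\ref{ass:avlemma}, I would apply the averaging lemma \cite[Theorem~2.3]{bouchut1999averaging} to the stopped family $f\edstopped$. Rewriting the equation as $\partial_t f^{\epsilon,\delta}+\epsilon^{-1}a(v)\cdot\nabla_x f^{\epsilon,\delta}=g^{\epsilon,\delta}$ with $g^{\epsilon,\delta}=\epsilon^{-2}Lf^{\epsilon,\delta}-b\cdot\nabla_x f^{\epsilon,\delta}-\sigma(m^\delta)f^{\epsilon,\delta}$, the right-hand side is controlled by Proposition~\ref{prop:L2_bound} (for the $\epsilon^{-2}Lf\edstopped$ term in $L^2_T\fSet$) together with the uniform $L^\infty_T\fSet$ bound on $f\edstopped$ and the moment bound on $m^\delta$ (for the lower-order terms, possibly taken in a negative-order space in $x$). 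The non-degeneracy condition on $a$ relative to $d\mu/dv$ in Assumption~\ref{ass:avlemma} is precisely what the averaging lemma requires to deduce relative compactness of the velocity averages $\rho\edstopped=\lrangle{f\edstopped}$ in $L^2_T L^2_x$, hence tightness.

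The main technical obstacle in both parts is the singular transport term $\epsilon^{-1}a(v)\cdot\nabla_x$. What ultimately tames it is the combination of the centering $\int a\M d\mu=0$ with the a priori smallness $\norm{Lf\edstopped}_{L^2_T\fSet}\lesssim\epsilon$ from Proposition~\ref{prop:L2_bound}: without either of these, both the modulus-of-continuity estimate for the first part and the application of the averaging lemma for the second part would blow up as $\epsilon\to 0$.
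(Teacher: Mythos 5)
Your first part is correct and takes a genuinely different route from the paper. The paper never writes the equation satisfied by the density: it proves tightness in the Skorokhod space $D_T H^{-\varsigma}_x$ via Jakubowski's criterion, verifying Aldous's condition for the real-valued processes $\phi(\rho\edstopped)$ with $\phi\in\Theta_{\lim}$, and since $\L^{\epsilon,\delta}\phi$ still contains the singular terms, this forces the use of the perturbed test function $\phi^{\epsilon,\delta}$ of Proposition~\ref{prop:corrector}, the martingale estimates of Proposition~\ref{prop:martingale_formulation_epsilon} (whose quadratic variation is controlled through Assumption~\ref{ass:regular_Lm}) and Lemma~\ref{lemma:integral_bound}. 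You instead test the kinetic equation against $\xi\in H^1_x$ and average in velocity: $\lrangle{Lf}=0$ removes the $\epsilon^{-2}$ term, and the centering of $a$ gives $\lrangle{a f^{\epsilon,\delta}}=-\lrangle{a Lf^{\epsilon,\delta}}$, so that Cauchy--Schwarz in time and the dissipation part of~\eqref{eq:L2_bound} bound the $\epsilon^{-1}$ contribution pathwise by a constant times $\abs{t-s}^{1/2}\norm{f_0^{\epsilon,\delta}}_\fSet$. This is more elementary: no correctors, no martingale problem, and no Assumption~\ref{ass:regular_Lm} at this stage. Two points should be made explicit: Aldous's condition involves stopping times, so the $\sigma(m^\delta)\rho^{\epsilon,\delta}$ term should be treated by H\"older in time plus the moment bound of Assumption~\ref{ass:moments_m_gamma} (in the spirit of Lemma~\ref{lemma:integral_bound}) rather than only for deterministic $s,t$; and the compact-containment part is needed as in the paper. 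What the paper's heavier route buys is economy elsewhere: the corrector and martingale machinery is needed anyway to identify the limit in Proposition~\ref{prop:limit_martingale}, so tightness comes along at no extra structural cost.

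Your second part, as written, has a genuine gap --- in fact two. First, with your decomposition $\partial_t f^{\epsilon,\delta}+\epsilon^{-1}a\cdot\nabla_x f^{\epsilon,\delta}=g^{\epsilon,\delta}$, where $g^{\epsilon,\delta}=\epsilon^{-2}Lf^{\epsilon,\delta}-b\cdot\nabla_x f^{\epsilon,\delta}-\sigma(m^\delta)f^{\epsilon,\delta}$, the source is not uniformly bounded: Proposition~\ref{prop:L2_bound} controls $\epsilon^{-1}\norm{Lf\edstopped}_{L^2_T\fSet}$, hence $\norm{\epsilon^{-2}Lf\edstopped}_{L^2_T\fSet}\lesssim\epsilon^{-1}$, which blows up as $\epsilon\to 0$; moreover $b\cdot\nabla_x f^{\epsilon,\delta}$ admits no uniform a priori bound in the spaces accepted by \cite[Theorem 2.3]{bouchut1999averaging} (there is no control of $\nabla_x f^{\epsilon,\delta}$, and your hedge about a negative-order space in $x$ is not covered by that statement). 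The missing idea is the paper's time rescaling: apply the averaging lemma to $t\mapsto f^{\epsilon,\delta}(\epsilon t)$, i.e., with the source $g^{\epsilon,\delta}=\epsilon\partial_t f^{\epsilon,\delta}+a\cdot\nabla_x f^{\epsilon,\delta}+\epsilon b\cdot\nabla_x f^{\epsilon,\delta}=\epsilon^{-1}Lf^{\epsilon,\delta}-\epsilon\sigma(m^\delta)f^{\epsilon,\delta}$, so that the stiff term carries exactly the factor $\epsilon^{-1}$ matched by~\eqref{eq:L2_bound} while the $b$-term travels with the transport; undoing the rescaling yields the uniform bound corresponding to~\eqref{eq:strong_tightness_claim1bis}. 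Second, even with this repaired, the averaging lemma only produces spatial regularity, namely a uniform bound in $L^2_T H^{\varsigma''}_x$; bounded sets of $L^2_T H^{\varsigma''}_x$ are not relatively compact in $L^2_T L^2_x$, so ``hence tightness'' does not follow from the lemma alone. One must combine the $L^2_T H^{\varsigma''}_x$ bound with uniform control of the time modulus in $H^{-\varsigma}_x$ --- which your first part does provide --- through the compactness criterion of \cite[Theorem 5]{simon1987compact}; this is exactly the paper's two-claim structure \eqref{eq:strong_tightness_claim2}--\eqref{eq:strong_tightness_claim1}. The second part is therefore repairable with ingredients you already have, but as stated the application of the averaging lemma would fail.
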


Second, let us state a result which allows us to identify limit points of the family $\seq{\rho^{\epsilon,\delta}}{\epsilon,\delta}$.

\begin{proposition} \label{prop:limit_martingale}
	Assume that $\rho_\infty$ is a $\C^0_T H^{-\varsigma}_x$-valued random variable, such that $\rho^{\epsilon_i,\delta_i} \xrightarrow[i \to \infty]{d} \rho_\infty$, in distribution in $\C^0_T H^{-\varsigma}_x$, for some sequence $(\epsilon_i,\delta_i) \xrightarrow[i \to \infty]{} 0, \epsilon_i \in (0,\epsilon_0], \delta_i \in (0,\delta_0]$.

    Then, for all $\phi \in \Theta_{\lim}$, almost surely, for all $t\in [0,T]$, one has
	\begin{equation*}
        \phi(\rho_\infty(t)) - \phi(\rho_\infty(0)) - \int_0^t \L \phi(\rho_\infty(s)) ds = 0.
	\end{equation*}

\end{proposition}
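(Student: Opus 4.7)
The plan is to pass to the limit in the martingale identity of Proposition~\ref{prop:martingale_formulation_epsilon} applied to the perturbed test function $\phi^{\epsilon,\delta}\in\Theta$ built in Proposition~\ref{prop:corrector}. Writing
\[
M^{\epsilon,\delta,\tau^\delta}_{\phi^{\epsilon,\delta}}(t) = \phi^{\epsilon,\delta}(f\edstopped(t),m\dstopped(t)) - \phi^{\epsilon,\delta}(f_0^{\epsilon,\delta},\n_0) - \int_0^{t\wedge\tau^\delta} \L^{\epsilon,\delta}\phi^{\epsilon,\delta}(f^{\epsilon,\delta},m^\delta)\,ds,
\]
I will show two things: first, $M^{\epsilon,\delta,\tau^\delta}_{\phi^{\epsilon,\delta}}(t)\to 0$ in $L^2(\Omega)$ as $(\epsilon,\delta)\to(0,0)$; second, $M^{\epsilon,\delta,\tau^\delta}_{\phi^{\epsilon,\delta}}(t)$ differs from the candidate expression $\phi(\rho\edstopped(t))-\phi(\rho_0^{\epsilon,\delta})-\int_0^{t\wedge\tau^\delta}\L\phi(\rho^{\epsilon,\delta})\,ds$ by a quantity tending to $0$ in $L^1(\Omega)$. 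Together with the continuous mapping theorem applied to any limit point, this will yield the claim.

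For the $L^2$ estimate I would exploit the fact that the first three summands $\phi,\phi_{1,0},\phi_{2,0}$ in $\phi^{\epsilon,\delta}$ do not depend on $\n$, so that the carré-du-champ of $\L_m$ reduces to $\Gamma_m(\phi^{\epsilon,\delta}) = \Gamma_m(\delta^2\phi_{0,2}+\epsilon\delta^2\phi_{1,2})$, which carries a pointwise prefactor $\delta^4$. Injecting this into the variance formula of Proposition~\ref{prop:martingale_formulation_epsilon} and bounding the integrand via Assumption~\ref{ass:regular_Lm}, the corrector estimates~\eqref{eq:control_phi_02}--\eqref{eq:control_phi_12}, the a priori estimate of Proposition~\ref{prop:L2_bound} for moments of $\|f\edstopped\|_\fSet$, and the stopping bound $\|m\dstopped\|_E\le\delta^{-\alpha}$ to absorb growth factors $1+\|\n\|_E^2\lesssim\delta^{-2\alpha}$, I expect a bound of the form
\[
\esp{\abs{M^{\epsilon,\delta,\tau^\delta}_{\phi^{\epsilon,\delta}}(t)}^2} \lesssim_{\phi,T} \delta^{2-2\alpha}\xrightarrow[\delta\to 0]{}0,
\]
thanks to $\alpha<1$. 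The approximation of $M^{\epsilon,\delta,\tau^\delta}_{\phi^{\epsilon,\delta}}$ by the candidate expression in $L^1$ follows directly from the error bounds~\eqref{eq:control_error_phi} and~\eqref{eq:control_perturbed_generator} integrated in time, combined with the same uniform moment controls.

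To identify the limit, I would combine the assumed convergence $\rho^{\epsilon_i,\delta_i}\to\rho_\infty$ in distribution in $\C^0_T H^{-\varsigma}_x$ with $\tau^{\delta_i}\to\infty$ in probability (Proposition~\ref{prop:taue_to_infty}) to obtain, via Slutsky's lemma, that $\rho\edstoppedi\to\rho_\infty$ in distribution in the same space (the events $\set{\tau^{\delta_i}\ge T}$ have probability tending to $1$, and on them $\rho\edstoppedi=\rho^{\epsilon_i,\delta_i}$ on $[0,T]$). Since $\phi\in\Theta_{\lim}$ has the form $\phi(\rho)=\chi((\rho,\xi)_{L^2_x})$ with $\chi\in\C^3_b$ and $\xi\in\C^3_x$, and $\L\phi(\rho)$ involves only $\rho$ tested against $\div(K\nabla\xi)+J\cdot\nabla\xi-\bar\sigma\xi\in H^\varsigma_x$, the mappings $\rho\mapsto\phi(\rho(t))$ and $\rho\mapsto\int_0^t\L\phi(\rho(s))\,ds$ are continuous on $\C^0_T H^{-\varsigma}_x$. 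The continuous mapping theorem and the two-sided approximation above then force $\phi(\rho_\infty(t))-\phi(\rho_\infty(0))-\int_0^t\L\phi(\rho_\infty(s))\,ds$ to equal zero in distribution, hence almost surely for each fixed $t\in[0,T]$; joint continuity in $t$ of both sides (using $\rho_\infty\in\C^0_T H^{-\varsigma}_x$) upgrades this to the stated almost sure identity valid simultaneously for all $t$.

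The main obstacle will be the first step: carefully tracking the polynomial dependence of every constant on $\|f\|_\fSet$ and $\|\n\|_E$, and ensuring that the blow-up $\delta^{-\alpha}$ inherited from $\|m\dstopped\|_E$ is strictly dominated by the $\delta^2$-prefactors produced by the corrector decomposition and by the $\delta^{-2}$ from the time scale. This is precisely the point at which the choice $\alpha\in(2/\gamma,1)$ in Definition~\ref{def:taue} and the polynomial growth Assumption~\ref{ass:regular_Lm} enter decisively.
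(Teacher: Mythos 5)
Your route is sound and genuinely different from the paper's. The paper proceeds in two stages: it first proves that $M_\phi(t)=\phi(\rho_\infty(t))-\phi(\rho_\infty(0))-\int_0^t\L\phi(\rho_\infty(s))\,ds$ is a square-integrable martingale for the filtration of $\rho_\infty$ (passing to the limit in expectations of functionals carrying conditioning factors $g(\rho(s_1),\dots,\rho(s_j))$, which requires uniform-integrability bookkeeping), and only then kills this martingale by a subdivision argument exploiting that $\L$ is first order, i.e. $\L(\phi^2)=2\phi\L\phi$, yielding $\esp{\abs{M_\phi(t)}^2}\lesssim\Delta^{1/2}\to 0$. You instead observe that the \emph{prelimit} martingale already vanishes: since $\phi$, $\phi_{1,0}$, $\phi_{2,0}$ do not depend on $\n$, the carr\'e-du-champ in the variance identity of Proposition~\ref{prop:martingale_formulation_epsilon} reduces to that of $\delta^2(\phi_{0,2}+\epsilon\phi_{1,2})$, so the $\delta^{-2}$ from the time scale is overcompensated by $\delta^4$, and $\esp{\abs{M\edstopped_{\phi^{\epsilon,\delta}}(t)}^2}\lesssim\delta^{2-2\alpha}\to 0$ since $\alpha<1$; this computation is implicitly validated by the paper's own tightness proof, which performs the same reduction. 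Combined with the corrector error bounds, the continuity of $G_t(\rho)\doteq\phi(\rho(t))-\phi(\rho(0))-\int_0^t\L\phi(\rho(s))\,ds$ on $\C^0_T H^{-\varsigma}_x$, and the Slutsky step replacing $\rho\edstoppedi$ by $\rho^{\epsilon_i,\delta_i}$, uniqueness of distributional limits gives $G_t(\rho_\infty)=0$ almost surely for fixed $t$, then for all $t$ by continuity. This avoids the limit martingale problem and all uniform-integrability arguments, a real simplification made possible precisely because the limit equation is deterministic; the paper's two-stage scheme is the one that would generalize to settings where the limit martingale is genuinely nonzero.

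There is one concrete soft spot, exactly where the general two-parameter regime bites. In your $L^1$ approximation step you propose to absorb all growth in $\norm{\n}_E$ via the stopping bound $\norm{m\dstopped(t)}_E\le\delta^{-\alpha}$. For the $O\bigl(\delta^2(1+\norm{\n}_E^2)\bigr)$ part of~\eqref{eq:control_perturbed_generator} this gives $\delta^{2-2\alpha}\to 0$ and is fine, but for the $O\bigl(\epsilon(1+\norm{\n}_E)\bigr)$ part it yields $\epsilon\delta^{-\alpha}$, which need \emph{not} tend to zero when $\epsilon$ and $\delta$ go to zero independently (take $\delta_i$ exponentially small in $1/\epsilon_i$). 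The correct treatment — the one the paper uses for its term $r_5$ — is to drop the pathwise stopping bound for this term and apply H\"older in $\omega$ with the uniform $\gamma$-moment bound of Assumption~\ref{ass:moments_m_gamma}, as packaged in Lemma~\ref{lemma:integral_bound}: this gives $\esp{\int_0^{t\wedge\tau^\delta}(1+\norm{f^{\epsilon,\delta}(s)}_\fSet^3)(1+\norm{m^\delta(s)}_E)\,ds}\lesssim 1$ uniformly in $(\epsilon,\delta)$, hence a contribution $O(\epsilon)$; the twelfth moment in Assumption~\ref{ass:f0} and $\gamma>2$ make the exponents compatible. You should also record explicitly the boundary mismatch $\int_{t\wedge\tau^\delta}^{t}\L\phi(\rho\edstopped(s))\,ds$ between your stopped candidate expression and $G_t(\rho\edstopped)$; it is controlled by Cauchy--Schwarz and $\proba{\tau^\delta<T}\to 0$ (the paper's term $r_6$), consistent with your Slutsky remark. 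With these two repairs your argument is complete.
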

%j'ai mis \rho_\infty au lieu de \rho, qui est la notation utilisée pour la solution de l'équation limite
%ou alors il faudrait appeler la solution \bar\rho?

The proofs of Propositions~\ref{prop:tightness} and~\ref{prop:limit_martingale} are technical and are given below.

\subsubsection{Proofs of the main results}
We are now in position to prove the main results of this article.

\begin{proof}[Proof of Theorem~\ref{thm:main_Hneg}]
    Let $\varsigma\in(0,1)$ be fixed. Owing to Proposition~\ref{prop:tightness}, the family of processes $\seq{\rho^{\epsilon,\delta}}{\epsilon,\delta}$ is tight in the space $\C^0_T H^{-\varsigma}_x$. Due to Prohorov Theorem, there exist a $\C^0_tT H^{-\varsigma}_x$-valued random variable $\rho_\infty$ and sequences $\seq{\epsilon_i}{i\in\mathbb{N}}$ and $\seq{\delta_i}{i\in\mathbb{N}}$, such that $\epsilon_i\to 0$ and $\delta_i\to 0$, and $\rho^{\epsilon_i,\delta_i} \xrightarrow[i \to \infty]{d} \rho_\infty$. Let us prove that $\rho_\infty$ is a weak solution of~\eqref{eq:limit_equation}: this requires to prove that~\eqref{eq:limit_equation_weak} holds and that $\rho_\infty\in L_T^\infty L_x^2$ almost surely.

To obtain~\eqref{eq:limit_equation_weak}, it suffices to use Proposition~\ref{prop:limit_martingale} and a localization argument. Introduce the auxiliary functions $\chi^r$, for all $r\ge 0$, such that $\chi^r \in \C^3_b(\R)$, $\chi^r$ is odd and
	\begin{gather*}
		\forall u \in [0,r], \chi^r(u) = u,\\
		\forall u \in [r+1,\infty), \chi^r(u) = r+1.
	\end{gather*}
Let $\xi \in H^2_x$ and define the test function $\phi^r\in \Theta_{\lim}$ by
	\begin{equation*}
		\phi^r(\rho) = \chi^r\lrpar{\lrpar{\rho,\xi}_{L^2_x}}.
	\end{equation*}
	Since $\rho_\infty \in \C^0_T H^{-\varsigma}_x$ almost surely, the random variable
	\begin{equation*}
		S \doteq \sup_{t \in [0,T]} \abs{\lrpar{\rho_\infty(t),\xi}_{L^2_x}},
	\end{equation*}
	is finite almost surely. On the one hand, by the definition of $S$, one has $\phi^S(\rho_\infty(t)) = \lrpar{\rho_\infty(t),\xi}_{L^2_x}$ and similarly $\L\phi^S(\rho_\infty(t)) =\lrpar{\bar\rho_\infty(t),\div(K \nabla \xi) + J \cdot \nabla \xi - \bar \sigma \xi}_{L^2_x}$, for all $t\ge 0$. On the other hand, owing to Proposition~\ref{prop:limit_martingale}, one has, for all $t\ge 0$,
	\begin{equation*}
	    \phi^S(\rho_\infty(t)) = \phi^S(\rho_\infty(0)) + \int_0^t \L \phi^S(\rho_\infty(s)) ds.
	\end{equation*}
Combining the arguments proves that~\eqref{eq:limit_equation_weak} holds, for all $\xi\in H^2_x$.

Let us now prove that $\rho_\infty \in L_T^\infty L^2_x$. Consider the self-adjoint operator $\mathcal S$ on $L^2_x$ defined by
\begin{equation*}
	D(\mathcal S) = H^2_x, \quad \mathcal S \rho = \div(K \nabla \rho) - \bar \sigma \rho.
\end{equation*}
Since $K$ is a positive symmetric matrix, the operator $\mathcal S - \Lambda \id$ is invertible when $\Lambda > \norm{\bar \sigma}_{\C^0_x}$. Its inverse is a compact operator, owing to the compact embedding $H^2_x \subset L^2_x$. Therefore, there exists a complete orthonormal system $\seq{e_i}{i \in \mathbb N_0}$ of $L^2_x$ composed of eigenvectors for $\lrpar{\mathcal S - \Lambda \id}^{-1}$. For $i \in \mathbb N_0$, let $\lambda_i \in \R$ be the eigenvalue of $\mathcal S$ associated to the eigenvector $e_i$: $\mathcal S e_i = \lambda_i$. Note that $\lambda_i \leq \Lambda$ for all $i\in\mathbb N_0$.

Fix $i \in \mathbb N_0$. Since $e_i \in H^2_x$, \eqref{eq:limit_equation_weak} reads for $t \in [0,T]$
\begin{equation*}
	\lrpar{\rho_\infty(t),e_i}_{L^2_x} = \lrpar{\bar\rho_0,e_i}_{L^2_x} + \lambda_i \int_0^t \lrpar{\rho_\infty(s),e_i}_{L^2_x} ds + \int_0^t \lrpar{\rho_\infty(s),J \cdot \nabla_x e_i}_{L^2_x} ds.
\end{equation*}
Therefore, one has, for all $t \in [0,T]$
\begin{equation*}% \label{eq:limit_solution_expr}
	\lrpar{\rho_\infty(t,\cdot + tJ),e_i}_{L^2_x} = e^{\lambda_i t} \lrpar{\bar\rho_0,e_i}_{L^2_x}.
\end{equation*}
Since $\lambda_i t \leq \Lambda T$ and $\bar\rho_0 \in L^2_x$, $\abs{\lrpar{\rho_\infty(t,\cdot + tJ),e_i}_{L^2_x}}^2$ is summable. We thus have $\rho_\infty(t) \in L^2_x$ and
\begin{equation}\label{eq:limit_solution_norm}
	\norm{\rho_\infty(t)}_{L^2_x}^{2}=\norm{\rho_\infty(t,\cdot + tJ)}_{L^2_x}^2 \leq e^{2 \Lambda T} \norm{\bar\rho_0}_{L^2_x}^2.
\end{equation}
This proves that $\rho_\infty \in L^\infty_T L^2_x$, and that $\rho_\infty$ is a weak solution of~\eqref{eq:limit_equation} in the sense of Definition \ref{def:limit_equation}.

To prove the convergence in distribution stated in Theorem~\ref{thm:main_Hneg}, it only remains to prove that the weak solution of~\eqref{eq:limit_equation} is unique. Since the evolution equation is linear, it is sufficient to prove that, if $\bar\rho_0 = 0$, then any weak solution $\bar\rho$ of~\eqref{eq:limit_equation} satisfies $\bar\rho(t) = 0$ for all $t \in [0,T]$. This claim is a straightforward consequence of~\eqref{eq:limit_solution_norm} (satisfied by any function $\bar\rho$ satisfying~\eqref{eq:limit_equation_weak}).
    
As a consequence, any limit point $\rho_\infty$ of the tight family $\seq{\rho^{\epsilon,\delta}}{\epsilon,\delta}$ in the space $\C^0_T H^{-\varsigma}_x$ is the unique solution $\bar\rho$ of~\eqref{eq:limit_equation}. Therefore $\rho^{\epsilon,\delta}$ converges in distribution to the unique weak solution $\bar\rho$ of~\eqref{eq:limit_equation}.
    
When the initial condition $\bar\rho_0$ is deterministic, the solution $\bar\rho$ of~\eqref{eq:limit_equation} is also deterministic. Then, using Portmanteau Theorem, in that case $\seq{\rho^{\epsilon,\delta}}{\epsilon,\delta}$ converges to $\bar\rho$ in probability.

This concludes the proof of Theorem~\ref{thm:main_Hneg}.

\end{proof}

\begin{proof}[Proof of Theorem~\ref{thm:main_L2}]
    Let Assumption~\ref{ass:avlemma} be satisfied. Owing to Proposition~\ref{prop:tightness}, the family of processes $\seq{\rho^{\epsilon,\delta}}{\epsilon,\delta}$ is tight in $L^2_T L^2_x$. Therefore, there exist a $L^2_T L^2_x$-valued random variable $\rho_\infty$ and sequences $\seq{\epsilon_i}{i\in\mathbb{N}}$ and $\seq{\delta_i}{i\in\mathbb{N}}$, such that $\epsilon_i\to 0$ and $\delta_i\to 0$, and $\rho^{\epsilon_i,\delta_i} \xrightarrow[i \to \infty]{d} \rho_\infty$. On the one hand, convergence in $L^2_T L^2_x$ implies convergence in $L^2_T H^{-\varsigma}_x$. On the other hand, owing to Theorem~\ref{thm:main_Hneg}, $\rho^{\epsilon_i,\delta_i}$ converges to $\bar\rho$ in $\C^0_T H^{-\varsigma}_x$, thus in $L^2_T H^{-\sigma}_x$. Therefore $\rho_\infty$ is equal to $\bar\rho$ in distribution. By uniqueness of the limit points, one thus obtains the convergence of $\seq{\rho^{\epsilon,\delta}}{\epsilon,\delta}$ to $\bar\rho$ in $L^2_T L^2_x$.

    It remains to establish the convergence of $f^{\epsilon,\delta}$ to $\bar\rho \M$.
    
    Note first that the mapping $h\in L^2_T L^2_x \mapsto h \M \in L^2_T \fSet$ is continuous (it is a bounded linear operator). Thus, $\rho^{\epsilon,\delta} \M \xrightarrow[\epsilon,\delta \to 0]{} \bar\rho \M$ in distribution in $L^2_T \fSet$.
    
    Owing to Slutsky's Lemma (since \cite[Theorem 4.1]{billingsley1999convergence}) and to the identity
    \begin{equation*}
    	f^{\epsilon,\delta} = - L f^{\epsilon,\delta} + \rho^{\epsilon,\delta} \M,
    \end{equation*}
    it only remains to prove that $Lf^{\epsilon,\delta} \xrightarrow[\epsilon,\delta \to 0]{} 0$ in probability in $L^2_T \fSet$.
    
    Owing to Proposition \ref{prop:L2_bound}, almost surely one has
    \begin{equation*}
    	\int_0^{T \wedge \tau^\delta} \norm{Lf^{\epsilon,\delta}(s)}_\fSet^2 ds \leq \epsilon^2 C(T) \norm{f^{\epsilon,\delta}_0}_\fSet^2.
    \end{equation*}
    Therefore, on the event $\tau^\delta \geq T$, one has $\norm{Lf^{\epsilon,\delta}}_{L^2_T \fSet} \leq \epsilon \sqrt{C(T)} \norm{f^{\epsilon,\delta}_0}_\fSet$. As a consequence, we have for $\eta \in (0,1)$
    \begin{align*}
    	\proba{\norm{Lf^{\epsilon,\delta}}_{L^2_T \fSet} > \eta}
    		&= \proba{\tau^\delta < T, \norm{Lf^{\epsilon,\delta}}_{L^2_T \fSet} > \eta}\\
    		&\phantom{=}\ + \proba{\tau^\delta \geq T, \norm{Lf^{\epsilon,\delta}}_{L^2_T \fSet} > \eta}\\
    		&\leq \proba{\tau^\delta < T} + \proba{\epsilon \sqrt{C(T)} \norm{f^{\epsilon,\delta}_0}_\fSet > \eta}\\
    		&\leq \proba{\tau^\delta < T} + \eta^{-1} \epsilon \sqrt{C(T)} \esp{\norm{f^{\epsilon,\delta}_0}_\fSet},
    \end{align*}
    owing to the Markov inequality. Using Proposition~ \ref{prop:taue_to_infty} and Assumption~\ref{ass:f0}, one obtains the convergence of $Lf^{\epsilon,\delta}$ to $0$ in probability in $L^2_T \fSet$ when $(\epsilon,\delta) \to (0,0)$. This concludes the proof of the convergence in distribution of $f^{\epsilon,\delta}$ to $\bar\rho \M$.
    
    When $\bar\rho_0$ is deterministic, $\bar\rho$ is deterministic, and the convergence results hold in probability, using Portmanteau Theorem.

This concludes the proof of Theorem~\ref{thm:main_L2}.

\end{proof}

\subsubsection{Proofs of the two technical results}

The following lemma is used in the proofs of Propositions~\ref{prop:tightness} and~\ref{prop:limit_martingale}.
\begin{lemma} \label{lemma:integral_bound}
	Let $\phi : \fSet \times E \to \R$ be a function such that
	\begin{equation*}
		\sup_{f \in \fSet,\n \in E} \frac{\abs{\phi(f,\n)}}{(1 + \norm{f}_\fSet^6) (1 + \norm{\n}_E)} < \infty.
	\end{equation*}
	Then, for all $T \in (0,\infty)$, $\epsilon \in (0,\epsilon_0]$ and $\delta \in (0,\delta_0]$ and for all random times $\tau_1$ and $\tau_2$ satisfying almost surely $0 \leq \tau_1 \leq \tau_2 \leq T$, one has
	\begin{equation} \label{eq:integral_bound_random_time}
		\sup_{\epsilon,\delta}\esp{\int_{\tau_1 \wedge \tau^\delta}^{\tau_2 \wedge \tau^\delta} \abs{\phi(f\edstopped(t),m\dstopped(t))} dt} \lesssim \esp{\tau_2 - \tau_1}^{\frac12-\frac1\gamma}.
	\end{equation}

\end{lemma}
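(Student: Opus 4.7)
The plan is to combine the growth hypothesis on $\phi$ with the a priori estimate of Proposition~\ref{prop:L2_bound} on $f\edstopped$, and then apply H\"older's inequality twice: once in the time variable (with exponent $\gamma$) to extract a power of the length of the random interval, and once in the expectation to decouple the stopping times from the moments of $m^\delta$.

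The first step is to reduce the integral over $[\tau_1\wedge\tau^\delta,\tau_2\wedge\tau^\delta]$ to an integral over $[\tau_1,\tau_2]$ with an indicator $\indset{t\le\tau^\delta}$. On that indicator set, one has the identities $f\edstopped(t)=f^{\epsilon,\delta}(t)$ and $m\dstopped(t)=m^\delta(t)$, and moreover Proposition~\ref{prop:L2_bound} gives $\norm{f^{\epsilon,\delta}(t)}_\fSet\le C(T)^{1/2}\norm{f^{\epsilon,\delta}_0}_\fSet$. Applying the growth assumption on $\phi$ and factoring out the bound on $f^{\epsilon,\delta}$, I get, pointwise and almost surely,
\begin{equation*}
\int_{\tau_1\wedge\tau^\delta}^{\tau_2\wedge\tau^\delta}\abs{\phi(f\edstopped(t),m\dstopped(t))}\,dt\lesssim (1+\norm{f^{\epsilon,\delta}_0}_\fSet^6)\int_{\tau_1}^{\tau_2}(1+\norm{m^\delta(t)}_E)\,dt.
\end{equation*}

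Next, I take expectation and apply Cauchy-Schwarz to split the $\norm{f^{\epsilon,\delta}_0}$ factor from the random time integral. Assumption~\ref{ass:f0} gives $\sup_{\epsilon,\delta}\esp{(1+\norm{f^{\epsilon,\delta}_0}_\fSet^6)^2}<\infty$ since $12$-th moments are available. The remaining task is to prove
\begin{equation*}
\esp{\Bigl(\int_{\tau_1}^{\tau_2}(1+\norm{m^\delta(t)}_E)\,dt\Bigr)^{2}}\lesssim \esp{\tau_2-\tau_1}^{1-2/\gamma},
\end{equation*}
uniformly in $\delta$. For this, first apply H\"older in time with conjugate exponents $\gamma$ and $\gamma/(\gamma-1)$ to obtain
\begin{equation*}
\Bigl(\int_{\tau_1}^{\tau_2}(1+\norm{m^\delta(t)}_E)\,dt\Bigr)^{2}\le(\tau_2-\tau_1)^{2(\gamma-1)/\gamma}\Bigl(\int_{\tau_1}^{\tau_2}(1+\norm{m^\delta(t)}_E)^{\gamma}\,dt\Bigr)^{2/\gamma}.
\end{equation*}
Then apply H\"older in expectation with exponents $\gamma/(\gamma-2)$ and $\gamma/2$, chosen precisely so that the moment integral appears with power one. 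The moment integral is controlled by Fubini and Assumption~\ref{ass:moments_m_gamma}: $\int_0^T\esp{(1+\norm{m^\delta(t)}_E)^{\gamma}}\,dt\le T\sup_{t\ge 0}\esp{(1+\norm{m(t/\delta^2)}_E)^{\gamma}}<\infty$ uniformly in $\delta$. The random interval factor yields, using $\tau_2-\tau_1\le T$ and the elementary bound $X^{r}\le T^{r-1}X$ for $r\ge 1$, the power $\esp{\tau_2-\tau_1}^{(\gamma-2)/\gamma}=\esp{\tau_2-\tau_1}^{1-2/\gamma}$. Taking a square root yields the desired exponent $1/2-1/\gamma$.

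The main technical subtlety is the uniformity in $\delta$ of $\esp{(1+\norm{m^\delta(t)}_E)^{\gamma}}$: this is exactly why Assumption~\ref{ass:moments_m_gamma} is stated with suprema over unit intervals, giving a pointwise-in-$t$ moment bound that is invariant under time rescaling. Restricting the integration to $\{t\le\tau^\delta\}$ at the outset is crucial, because it lets me work with the (unstopped) process $m^\delta$ and invoke this pointwise moment bound via Fubini, instead of trying to control $\esp{\norm{m\dstopped(t)}_E^\gamma}$, whose supremum-in-time moments degenerate as $\delta\to 0$.
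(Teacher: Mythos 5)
Your proof is correct and takes essentially the same route as the paper's: both reduce, via Proposition~\ref{prop:L2_bound} and the growth hypothesis, to controlling $(1+\norm{f^{\epsilon,\delta}_0}_\fSet^6)(1+\norm{m^\delta(t)}_E)$ over the random interval, and then apply H\"older with the same exponent triple $(p^*,2,\gamma)$, $\frac{1}{p^*}=\frac12-\frac1\gamma$, using the $12$th moments from Assumption~\ref{ass:f0} and the uniform-in-$(t,\delta)$ $\gamma$-moments from Assumption~\ref{ass:moments_m_gamma}. The only difference is bookkeeping: the paper applies the three-factor H\"older pointwise in $t$ to $\esp{\indset{[\tau_1,\tau_2]}(t)\cdots}$ and then Jensen in time, whereas you apply H\"older pathwise in time first and then in expectation, arriving at the identical exponent.
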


Recall that $\gamma>2$ is given by Assumption~\ref{ass:moments_m_gamma}.

\begin{proof}[Proof of Lemma~\ref{lemma:integral_bound}]

Since $m\dstopped(t) = m^\delta(t)$ for all $t \in [\tau_1 \wedge \tau^\delta, \tau_2 \wedge \tau^\delta]$, the estimate~\eqref{eq:L2_bound} from Proposition \ref{prop:L2_bound} yields
	\begin{equation*}
\esp{\int_{\tau_1 \wedge \tau^\delta}^{\tau_2 \wedge \tau^\delta} \abs{\phi(f\edstopped(t),m\dstopped(t))} dt} \lesssim_{T,\phi} \esp{\int_{\tau_1 \wedge \tau^\delta}^{\tau_2 \wedge \tau^\delta} \lrpar{1 + \norm{f^{\epsilon,\delta}_0}_\fSet^6} \lrpar{1 + \norm{m^\delta(t)}} dt}.
	\end{equation*}
	
Let $p^* \in [1,\infty)$ defined by $\frac{1}{\gamma} + \frac{1}{2} + \frac{1}{p^*} = 1$. Using the moment estimates for $m^{\delta}$ (see Assumption~\ref{ass:moments_m_gamma}) and for $f^{\epsilon,\delta}$ (see Assumption~\ref{ass:f0}), and applying H\"older inequality, one obtains
	\begin{align*}
&\esp{\int_{\tau_1 \wedge \tau^\delta}^{\tau_2 \wedge \tau^\delta} \abs{\phi(f\edstopped(t),m\dstopped(t))} dt}\\
			&\lesssim \int_0^T \esp{\ind{[\tau_1,\tau_2]}(t) \lrpar{1 + \norm{f^{\epsilon,\delta}_0}_\fSet^6} \lrpar{1 + \norm{m^\delta(t)}}} dt\\
			&\lesssim \int_0^T \esp{\ind{[\tau_1,\tau_2]}(t)^{p^*}}^{1/p^*} \esp{\lrpar{1 + \norm{f^{\epsilon,\delta}_0}_\fSet^6}^2}^{1/2} \esp{\lrpar{1 + \norm{m^\delta(t)}}^\gamma}^{1/\gamma} dt\\
			&\lesssim \int_0^T \esp{\ind{[\tau_1,\tau_2]}(t)}^{1/p^*} dt\\
			&\lesssim \lrpar{\int_0^T \esp{\ind{[\tau_1,\tau_2]}(t)} dt}^{1/p^*}\\
			&\lesssim \esp{\int_{\tau_1}^{\tau_2} dt}^{1/p^*}\\
			&\lesssim \esp{\tau_2 - \tau_1}^{1/p^*}.
	\end{align*}
All the estimates above are uniform with respect to $\epsilon\in(0,\epsilon_0]$ and $\delta\in(0,\delta_0]$. This concludes the proof of Lemma~\ref{lemma:integral_bound}.	
\end{proof}

Before proceeding, let us recall some useful results concerning tightness.

Introduce the Skorokhod space $D_T H^{-\varsigma}_x$, which is the space of $H^{-\varsigma}_x$-valued càdlàg functions on $[0,T]$. For all $X \in \C^0_T H^{-\varsigma}_x$ and all $\Delta\in[0,T]$, set
	\begin{gather*}
		w_X(\Delta) \doteq \sup_{0 \leq t \leq s \leq t + \Delta \leq T} \norm{X(s)-X(t)}\\
		w'_X(\Delta) \doteq \sup_{\seq{t_i}{i}} \max_i \sup_{t_i \leq t \leq s < t_{i+1}} \norm{X(s)-X(t)},
	\end{gather*}
	where $\seq{t_i}{i}$ denotes any finite subdivision of $[0,T]$. The moduli of continuity $\omega_X$ and $\omega_X'$ satisfy the following inequality (see~\cite[equation (14.11)]{billingsley1999convergence}): for all $\Delta \in [0,T]$ and all $X \in \C^0_T H^{-\varsigma}_x$, one has
	\begin{equation*}
		w_X(\Delta) \leq 2 w'_X(\Delta).
	\end{equation*}
We refer to~\cite[Theorems 8.2 and 15.2]{billingsley1999convergence} for tightness criteria in the spaces $\C^0_T H^{-\varsigma}_x$ and $D_T H^{-\varsigma}$. As a consequence, tightness in $D_T H^{-\varsigma}_x$ of a family a family of processes $\seq{X^{\epsilon,\delta}}{\epsilon,\delta}$ implies its tightness in $\C^0_T H^{-\varsigma}_x$.

	Observe that tightness in $D_T H^{-\varsigma}$ is easier to prove than tightness in $\C^0_T H^{-\varsigma}_x$, owing to \cite[Theorem 3.1]{jakubowski1986on}. More precisely, since the class of functions $\Theta_{\lim}$ is closed under addition and separates points, tightness of a family $\seq{X^{\epsilon,\delta}}{\epsilon,\delta}$ in the Skorokhod space $D_T H^{-\varsigma}$ is equivalent to the following claims:
	\begin{enumerate}[label=(\roman*)]
		\item \label{item:first_claim} For all $\eta \in (0,1]$, there exists a compact set $K_\eta \subset H^{-\varsigma}_x$ such that, for all $\epsilon \in (0,\epsilon_0]$ and $\delta \in (0,\delta_0]$,
		\begin{equation*} %\label{eq:aim_rho_tight}
			\proba{\forall t \in [0,T],X^{\epsilon,\delta}(t) \in K_\eta} > 1 - \eta.
		\end{equation*}
		\item \label{item:second_claim} For all $\phi \in \Theta_{\lim}$, $\seq{\phi(X^{\epsilon,\delta})}{\epsilon,\delta}$ is tight in the Skorohod space $D([0,T],\R)$ of càdlàg real-valued functions defined on the interval $[0,T]$.
	\end{enumerate}
To check that $(ii)$ is satisfied, we employ Aldous's criterion, see~\cite[Theorem 4.5 p356]{jacod2003limit} (note that the criterion is simplified since here $\phi$ is bounded): it suffices to prove that for all $\eta \in (0,\infty)$, one has 
\begin{equation} \label{eq:aldous}
	 \lim_{\Delta \to 0} \limsup_{(\epsilon,\delta) \to (0,0)} \sup_{\tau_1 \leq \tau_2 \leq \tau_1 + \Delta} \proba{\abs{\phi(X^{\epsilon,\delta}(\tau_2)) - \phi(X^{\epsilon,\delta}(\tau_1))} > \eta} = 0,
\end{equation}
where $\sup_{\tau_1 \leq \tau_2 \leq \tau_1 + \Delta}$ denotes the supremum with respect to all $\seq{\mathcal F^\delta_t}{t \in \R^+}$-stopping times $\tau_1$ and $\tau_2$ satisfying a.s. $\tau_1, \tau_2 \in [0,T]$ and $\tau_1 \leq \tau_2 \leq \tau_1 + \Delta$.

\begin{proof}[Proof of Proposition~\ref{prop:tightness}]
First, recall that $\tau^\delta\to\infty$ in probability, when $\delta\to 0$ (see Proposition~\ref{prop:taue_to_infty}. Owing to Slutsky's Lemma \cite[Theorem 4.1]{billingsley1999convergence}, it thus suffices to prove the tightness of the family $\seq{\rho\edstopped}{\epsilon,\delta}$.

Let us first establish the tightness in the space $\C^0_T H^{-\varsigma}_x$, with an arbitrarily small parameter $\varsigma>0$. As explained above, in fact we establish the tightness in $D_T H^{-\varsigma}$, using the criteria stated above. First, \ref{item:first_claim} is satisfied: indeed the embedding $L^2_x \subset H^{-\varsigma}_x$ is compact, and the a priori estimate~\eqref{eq:L2_bound} (see Proposition~\ref{prop:L2_bound}) yields the uniform moment bound
\[
\underset{\epsilon,\delta}\sup~\E[\underset{0\le t\le T}\sup~\norm{\rho\edstopped(t)}_{L^2_x}^2]\leq\underset{\epsilon,\delta}\sup~\E[\underset{0\le t\le T}\sup~\norm{f\edstopped(t)}_{\fSet}^2]\leq \underset{\epsilon,\delta}\sup~\E[\norm{f^{\epsilon,\delta}(0)}_{L^2_x}^2]<\infty
\]
owing to Assumption~\ref{ass:f0}. Then~\ref{item:first_claim} is a straightforward consequence of Markov inequality.

It remains to establish~\ref{item:second_claim}, using Aldous criterion~\ref{eq:aldous}. Let $\phi^{\epsilon,\delta}$ be the perturbed test function given by~\eqref{eq:def_phi_epsilon}, see Proposition~\ref{prop:corrector}, and let $M^{\epsilon,\delta}_{\phi^{\epsilon,\delta}}$ be defined as by~\eqref{eq:martingale_formulation_epsilon} (in Proposition~\ref{prop:martingale_formulation_epsilon}). For all $t\ge 0$, set
\begin{align}
		\theta^{\epsilon,\delta}(t)
			&= \phi(\rho^{\epsilon,\delta}(0)) + \phi^{\epsilon,\delta}(f^{\epsilon,\delta}(t),m^\delta(t)) - \phi^{\epsilon,\delta}(f^{\epsilon,\delta}(0),m^\delta(0)) \nonumber\\%\label{eq:def_thetae}\\
			&= \phi(\rho^{\epsilon,\delta}(0)) + \int_0^t \L^{\epsilon,\delta} \phi^{\epsilon,\delta} (f^{\epsilon,\delta}(s),m^\delta(s))ds + M^{\epsilon,\delta}_{\phi^{\epsilon,\delta}}(t), \label{eq:expr_thetae}
	\end{align}
For all stopping times $\tau_1,\tau_2$, one has the equality	
\begin{equation*}
	\begin{split}
	\phi(\rho\edstopped(\tau_2)) - \phi(\rho\edstopped(\tau_1))
		&= \lrpar{\theta\edstopped(\tau_2) - \theta\edstopped(\tau_1)}\\
		&\phantom{=}\ - \lrpar{\phi^{\epsilon,\delta}(f\edstopped(\tau_2),m\dstopped(\tau_2)) - \phi(\rho\edstopped(\tau_2))}\\
		&\phantom{=}\ + \lrpar{\phi^{\epsilon,\delta}(f\edstopped(\tau_1),m\dstopped(\tau_1)) - \phi(\rho\edstopped(\tau_1))}.
	\end{split}
\end{equation*}
On the one hand, owing to the error estimate~\eqref{eq:control_error_phi}, the estimates~\eqref{eq:taue_bounds_m} and~\eqref{eq:L2_bound} for $\norm{m\dstopped(t)}_E$ and $\norm{f\edstopped(t)}_\fSet$ yield
\begin{equation*}
	\abs{\phi^{\epsilon,\delta}(f\edstopped(t),m\dstopped(t)) - \phi(\rho\edstopped(t))} \lesssim_{\phi} (1 + \norm{f^{\epsilon,\delta}_0}_\fSet^2) (\epsilon + \delta^2 (1 + \delta^{-\alpha})).
\end{equation*}
Since $\alpha < 1$ in Definition \ref{def:taue}, we get
\begin{equation*}
	\esp{\abs{\phi^{\epsilon,\delta}(f\edstopped(\tau_i),m\dstopped(\tau_i)) - \phi(\rho\edstopped(\tau_i))}} \xrightarrow[\epsilon,\delta \to 0]{} 0,
\end{equation*}
for $i=1,2$.

On the other hand, we claim that
\begin{equation} \label{eq:aim_aldous2}
	\sup_{\epsilon,\delta} \sup_{\tau_1 \leq \tau_2 \leq \tau_1 + \Delta} \esp{\abs{\theta\edstopped(\tau_2) - \theta\edstopped(\tau_1)}} \xrightarrow[\Delta \to 0]{} 0.
\end{equation}
To prove that this claim holds, note that
\begin{equation} \label{eq:temp_new_aim_final_cut}
	\abs{\theta\edstopped(\tau_2) - \theta\edstopped(\tau_1)} \leq \int_{\tau_1 \wedge \tau^\delta}^{\tau_2 \wedge \tau^\delta} \abs{\L^{\epsilon,\delta} \phi^{\epsilon,\delta}(f^{\epsilon,\delta}(s),m^\delta(s))} ds + \abs{M\edstopped_{\phi^{\epsilon,\delta}}(\tau_2) - M\edstopped_{\phi^{\epsilon,\delta}}(\tau_1)}.
\end{equation}
To treat the first term on the right-hand side of~\eqref{eq:temp_new_aim_final_cut}, observe that for $(f,\n) \in \fSet \times E$, one has
\begin{align*}
		\abs{\L^{\epsilon,\delta} \phi^{\epsilon,\delta} (f,\n)}
			&\lesssim \epsilon \lrpar{1 + \norm{f}_\fSet^3} \lrpar{1 + \norm{\n}} + \delta^2 \lrpar{1 + \norm{f}_\fSet^3} \lrpar{1 + \norm{\n}^2}\\
			&\phantom{=}\ + \abs{\L \phi(\rho)}\\ 
			&\lesssim \lrpar{1 + \norm{f}_\fSet^3} \lrpar{1 + \norm{\n}} + \delta^2 \lrpar{1 + \norm{f}_\fSet^3} \lrpar{1 + \norm{\n}^2},
	\end{align*}
owing to the error estimate~\eqref{eq:control_perturbed_generator} for $\L^{\epsilon,\delta}\phi^{\epsilon,\delta}-\L\phi$ (see Proposition~\ref{prop:corrector}) and to the expression~\eqref{eq:def_L} of $\L\phi$. Using Lemma~\ref{lemma:integral_bound}, one obtains 
\begin{equation*}
		\sup_{\epsilon,\delta} \sup_{\tau_1 \leq \tau_2 \leq \tau_1 + \Delta} \esp{\int_{\tau_1 \wedge \tau^\delta}^{\tau_2 \wedge \tau^\delta} \abs{\lrpar{1 + \norm{f^{\epsilon,\delta}(s)}_\fSet^3} \lrpar{1 + \norm{m^\delta(s)}}} ds} \lesssim \Delta^{1/2-1/\gamma} \xrightarrow[\Delta \to 0]{} 0,
\end{equation*}
since $\gamma>2$ (see Assumption~\ref{ass:moments_m_gamma}. In addition, recall that $\alpha<1$: using the estimate~\eqref{eq:taue_bounds_m}, one obtains
\begin{equation*}
		\sup_{\epsilon,\delta} \sup_{\tau_1 \leq \tau_2 \leq \tau_1 + \Delta} \esp{\int_{\tau_1 \wedge \tau^\delta}^{\tau_2 \wedge \tau^\delta} \abs{\delta^2 \lrpar{1 + \norm{f^{\epsilon,\delta}(s)}_\fSet^3} \lrpar{1 + \norm{m^\delta(s)}^2}} ds} \lesssim \Delta \xrightarrow[\Delta \to 0]{} 0.
\end{equation*}

To treat the second term on the right-hand side of~\eqref{eq:temp_new_aim_final_cut}, note that $M\edstopped_{\phi^{\epsilon,\delta}}$ is a square-integrable martingale, owing to Proposition~\ref{prop:martingale_formulation_epsilon}. In addition, since $\tau_1\leq \tau_2$ are stopping times, one has
	\begin{multline*}
		\esp{\abs{M\edstopped_{\phi^{\epsilon,\delta}}(\tau_2) - M\edstopped_{\phi^{\epsilon,\delta}}(\tau_1)}^2}
			= \esp{\abs{M\edstopped_{\phi^{\epsilon,\delta}}(\tau_2)}^2 - \abs{M\edstopped_{\phi^{\epsilon,\delta}}(\tau_1)}^2}\\
			= \frac{1}{\delta^2} \esp{\int_{\tau_1 \wedge \tau^\delta}^{\tau_2 \wedge \tau^\delta} \lrpar{\L_m((\phi^{\epsilon,\delta})^2) - 2 \phi^{\epsilon,\delta} \L_m \phi^{\epsilon,\delta}}(f^{\epsilon,\delta}(s),m^\delta(s)) ds}. %\label{eq1:tightness}
	\end{multline*}
Recall the expression~\eqref{eq:def_phi_epsilon} of the perturbed test function $\phi^{\epsilon,\delta}$. Since $\phi$, $\phi_{1,0}$ and $\phi_{2,0}$ do not depend on $\n\in E$, one has
\begin{align*}
		&\esp{\abs{M\edstopped_{\phi^{\epsilon,\delta}}(\tau_2) - M\edstopped_{\phi^{\epsilon,\delta}}(\tau_1)}^2}\\
		&= \delta^2 \esp{\int_{\tau_1 \wedge \tau^\delta}^{\tau_2 \wedge \tau^\delta} \lrpar{\L_m((\phi_{0,2} + \epsilon \phi_{1,2})^2) - 2 (\phi_{0,2} + \epsilon \phi_{1,2}) \L_m (\phi_{0,2} + \epsilon \phi_{1,2})}(f^{\epsilon,\delta}(s),m^\delta(s)) ds}.
	\end{align*}
The correctors $\phi_{0,2}$ and $\phi_{1,2}$ satisfy the properties~\eqref{eq:control_phi_02} and~\eqref{eq:control_phi_12} respectively. In addition, $f^{\epsilon,\delta}$ and $m^{\delta}$ satisfy the estimates~\eqref{eq:L2_bound} and~\eqref{eq:taue_bounds_m} respectively. Using Assumption~\ref{ass:regular_Lm} and the condition $0\le \tau_2-\tau_1\le \Delta$, one finally obtains
\begin{align*}
		\esp{\abs{M\edstopped_{\phi^{\epsilon,\delta}}(\tau_2) - M\edstopped_{\phi^{\epsilon,\delta}}(\tau_1)}^2} &\lesssim \Delta \delta^2 \esp{(1 + \norm{f_0^{\epsilon,\delta}}_\fSet^2)(1 + \delta^{-2\alpha})}\\
		& \lesssim \Delta \xrightarrow[\Delta \to 0]{} 0,
	\end{align*}
since $\alpha<1$ and using Assumption~\ref{ass:f0}.

Gathering the estimates for the two terms on the right-hand side of~\eqref{eq:temp_new_aim_final_cut}, the claim~\eqref{eq:aim_aldous2} is proved. One finally obtains
\[
\sup_{\epsilon,\delta} \sup_{\tau_1 \leq \tau_2 \leq \tau_1 + \Delta} \esp{\abs{\phi(\rho\edstopped(\tau_2)) - \phi(\rho\edstopped(\tau_1))}} \xrightarrow[\Delta \to 0]{} 0,
\]
hence~\eqref{eq:aldous} holds. This concludes the proof of the tightness of the family $\seq{\rho\edstopped}{\epsilon,\delta}$ is tight in the space $\C^0_T H^{-\varsigma}_x$.

It remains to prove the tightness of $\seq{\rho\edstopped}{\epsilon,\delta}$ is tight in the space $L^2_T L^2_x$, if Assumption~\ref{ass:avlemma} is satisfied. It suffices to establish the following claims: for all $\eta\in(0,1)$, there exists $R\in(0,\infty)$ and $\varsigma''\in(0,1)$, such that
\begin{equation} \label{eq:strong_tightness_claim2}
	\lim_{\Delta \to 0} \limsup_{(\epsilon,\delta) \to (0,0)} \proba{w_{\rho\edstopped}(\Delta) > \eta} = 0,
\end{equation}
and
\begin{equation} \label{eq:strong_tightness_claim1}
	\sup_{\epsilon,\delta} \proba{\norm{\rho\edstopped}_{L^2_T H^{\varsigma''}_x} > R} < \eta.
\end{equation}
Indeed, for all $R>0$, ${\varsigma''}>0$ and $\eta : (0,\infty) \to [0,\infty)$ such that $\eta(\Delta) \xrightarrow[\Delta \to 0]{} 0$, the set
\begin{equation*}
	K_{R,\eta} \doteq \set{\rho \in L^2_T L^2_x \mid \norm{\rho}_{L^2_T H^{\varsigma''}_x} \leq R \mbox{ and } \forall \Delta \in (0,1), w_\rho(\Delta) < \eta(\Delta)}
\end{equation*}
is compact in $L^2_T L^2_x$, see~\cite[Theorem 5]{simon1987compact}

The first claim is a consequence of the tightness of $\seq{\rho\edstopped}{\epsilon,\delta}$ in $\C^0_TH_x^{-\varsigma}$ proved above, see~\cite[Theorem 8.2]{billingsley1999convergence}.

Owing to Markov inequality, it suffices to prove the following inequality: 
\begin{equation} \label{eq:strong_tightness_claim1bis}
	\sup_{\epsilon,\delta} \esp{\norm{\rho\edstopped}_{L^2_T H^{\varsigma''}_x}} \lesssim 1,
\end{equation}

Let $g^{\epsilon,\delta} = \epsilon \partial_t f^{\epsilon,\delta} + a(v) \cdot \nabla_x f^{\epsilon,\delta} + \epsilon b(v) \cdot \nabla_x f^{\epsilon,\delta}$. By Assumption~\ref{ass:avlemma}, we are in position to apply an averaging lemma , precisely \cite[Theorem 2.3]{bouchut1999averaging} (with $f(t) = f^{\epsilon,\delta}(\epsilon t)$, $g(t) = g^{\epsilon,\delta}(\epsilon t)$ and $h = 0$ until time $T \wedge \tau^\delta$). After rescaling the time $t \mapsto t/\epsilon$, one obtains the inequality
\begin{align*}
	\norm{\rho\edstopped}_{L^2_T H^{\varsigma'/4}_x}^2
		&= \int_0^{T \wedge \tau^\delta} \norm{\rho\edstopped(t)}_{H^{\varsigma'/4}_x}^2 dt\\
		&\lesssim \epsilon \norm{f_0^{\epsilon,\delta}}_{L^2_x}^2 + \int_0^{T \wedge \tau^\delta} \norm{f\edstopped(t)}_{\fSet}^2 dt + \int_0^{T \wedge \tau^\delta}\norm{g\edstopped(t)}_{\fSet}^2 dt.
\end{align*}
Applying the Cauchy-Schwarz inequality gives
\begin{align*}
	\norm{g\edstopped(t)}_{\fSet}
		&= \norm{\epsilon f\edstopped(t) \sigma(m\dstopped(t)) + \frac{1}{\epsilon} Lf\edstopped(t)}_{\fSet}\\
		&\leq \epsilon \norm{f\edstopped(t)}_{\fSet} \norm{\sigma(m\dstopped(t))}_{\C^0_x} + \frac{1}{\epsilon} \norm{Lf\edstopped(t)}_{\fSet}.
\end{align*}
It is now crucial to use the a priori estimate~\eqref{eq:L2_bound} (see Proposition~\ref{prop:L2_bound}) to control the intergral term
\[
\int_{0}^{T\wedge\tau^\delta}\frac{1}{\epsilon} \norm{Lf\edstopped(t)}_{\fSet} dt
\]
uniformly with respect to $\epsilon,\delta$. Using Assumption \ref{ass:f0}, the estimates~\eqref{eq:taue_bounds_m} and~\eqref{eq:L2_bound} for $m^{\delta,\tau^\delta}(t)$ and $f\edstopped(t)$, and Lemma~\ref{lemma:integral_bound}, the claim~\eqref{eq:strong_tightness_claim1bis} is proved, with $\varsigma'' = \frac{\varsigma'}{4}$.

Since we proved~\eqref{eq:strong_tightness_claim2} and~\eqref{eq:strong_tightness_claim1}, the family $\seq{\rho\edstopped}{\epsilon,\delta}$ is tight in $L^2_T L^2_x$, when Assumption~\ref{ass:avlemma} is satisfied.

This concludes the proof of Proposition~\ref{prop:tightness}.
\end{proof}

\begin{proof}[Proof of Proposition~\ref{prop:limit_martingale}]
Let $\rho_\infty$ be such that $\rho_{\infty}=\underset{i\to\infty}\lim~\rho^{\epsilon_i,\delta_i}$, for some sequence $(\epsilon_i,\delta_i)\to (0,0)$.

For all $\phi \in \Theta_{\lim}$, define the stochastic process $M_\phi$ as follows: for all $t\ge 0$,
\begin{equation*}
    M_\phi(t) = \phi(\rho_\infty(t)) - \phi(\rho_\infty(0)) - \int_0^t \L \phi(\rho_\infty(s)) ds.
\end{equation*}
Let us start by proving that, for all $\phi \in \Theta_{\lim}$, $M_\phi$ is a square integrable martingale adapted to the filtration generated by $\rho_\infty$.

Let the test function $\phi \in \Theta_{\lim}$ be fixed. Since $\phi$ is bounded, to prove that the process $M_\phi$ is square-integrable, it suffices to prove that
\begin{equation} \label{eq:Lphi_moments}
    \sup_{t \in [0,T]} \esp{\abs{\L\phi(\rho_\infty(t))}^2} < \infty.
\end{equation}
Observe that, for all $t \in [0,T]$, the mapping $\rho \in \C^0_T H^{-\varsigma}_x\mapsto \abs{\L\phi(\rho(t))}^2\in \mathbb{R}$ is continuous. Thus one has the convergence in distribution $\abs{\L\phi(\rho\edstoppedi(t))}^2 \xrightarrow[i \to \infty]{d} \abs{\L\phi(\rho_\infty(t))}^2$ (see~\cite[Proposition IX.5.7]{bourbaki2004integrationII}). In addition, $\seq{\abs{\L\phi(\rho\edstoppedi(t))}^2}{\epsilon_i,\delta_i}$ is uniformly integrable: one has
\begin{equation*}
	\sup_{i\in\mathbb N} \esp{\abs{\L\phi(\rho\edstoppedi(t))}^4} \lesssim \sup_{\epsilon,\delta} \esp{\norm{f^{\epsilon,\delta}_0}_\fSet^4} < \infty,
\end{equation*}
using the estimates from Proposition~\ref{prop:L2_bound} and Assumption~\ref{ass:f0}, and the expression~\eqref{eq:Lrho-theta} of $\L\phi$ when $\phi\in \Theta_{\lim}$. Using~\cite[Theorem 5.4]{billingsley1999convergence}, the convergence in distribution and uniform integrability property give
\begin{equation*}
	\esp{\abs{\L\phi(\rho_\infty(t))}^2} = \lim_{i \to \infty} \esp{\abs{\L\phi(\rho\edstoppedi(t))}^2} \lesssim \sup_{\epsilon,\delta} \esp{\norm{f^{\epsilon,\delta}_0}_\fSet^2}<\infty.
\end{equation*}
This yields the square integrability property~\eqref{eq:Lphi_moments}.

The next step is to prove that $M_\phi$ is a martingale. Let $0 \leq s_1 \leq ... \leq s_j \leq s \leq t$ and let $g \in \C^0_b(\lrpar{H^{-\varsigma}_x}^j)$ be a continuous bounded function. Define the mapping
	\begin{equation*}
		\Phi : \rho \in \C_T^0H_x^{-\varsigma} \mapsto \lrpar{\phi(\rho(t)) - \phi(\rho(s)) - \int_s^t \L \phi(\rho(u)) du} g(\rho(s_1), ..., \rho(s_j)).
	\end{equation*}
To prove that $M_\phi$ is a martingale, it suffices to prove the following claim: $\esp{\Phi(\rho_\infty)} = 0$.

Let us first check that $\esp{\Phi(\rho\edstoppedi)}\to \esp{\Phi(\rho_\infty)}$ when $i\to\infty$. This claim follows from the definition of $\rho_\infty$ as the limit in distribution of $\rho\edstoppedi$ and straightforward arguments. The mapping $\Phi$ is continuous on $\C^0_T H^{-\varsigma}_x$ and $\seq{\Phi(\rho\edstoppedi)}{\epsilon_i,\delta_i}$ is uniformly integrable. Using the same arguments as for the proof of~\eqref{eq:Lphi_moments}, one obtains the following convergence result:
\begin{equation*}
		\esp{\Phi(\rho\edstoppedi)} \xrightarrow[i \to \infty]{} \esp{\Phi(\rho_\infty)}.
\end{equation*}

Let us now check that $\esp{\Phi(\rho\edstoppedi)}\to 0$ when $i\to\infty$. This claim follows from a martingale property and the perturbed test function to take the limit $i\to\infty$. Let $\phi^{\epsilon,\delta}$ be the perturbed test function given by~\eqref{eq:def_phi_epsilon}, see Proposition~\ref{prop:corrector}. Using the martingale property from Proposition~\ref{prop:martingale_formulation_epsilon}, one has
	\begin{multline*}
		\esp{\lrpar{\phi^{\epsilon_i,\delta_i}(f\edstoppedi(t),m\dstoppedi(t)) - \phi^{\epsilon_i,\delta_i}(f\edstoppedi(s),m\dstoppedi(s))  \vphantom{\lrpar{\phi^{\epsilon_i,\delta_i}(f\edstoppedi(t),m\dstoppedi(t)) - \phi^{\epsilon_i,\delta_i}(f\edstoppedi(s),m\dstoppedi(s)) - \int_{s \wedge \tau^{\delta_i}}^{t \wedge \tau^{\delta_i}} \L^{\epsilon_i,\delta_i} \phi^{\epsilon_i,\delta_i}(f^{\epsilon_i,\delta_i}(u),m^{\delta_i}(u))du}} \right. \right.\\
		\left. \left. \vphantom{\lrpar{\phi^{\epsilon_i,\delta_i}(f\edstoppedi(t),m\dstoppedi(t)) - \phi^{\epsilon_i,\delta_i}(f\edstoppedi(s),m\dstoppedi(s)) - \int_{s \wedge \tau^{\delta_i}}^{t \wedge \tau^{\delta_i}} \L^{\epsilon_i,\delta_i} \phi^{\epsilon_i,\delta_i}(f^{\epsilon_i,\delta_i}(u),m^{\delta_i}(u))du}} - \int_{s \wedge \tau^{\delta_i}}^{t \wedge \tau^{\delta_i}} \L^{\epsilon_i,\delta_i} \phi^{\epsilon_i,\delta_i}(f^{\epsilon_i,\delta_i}(u),m^{\delta_i}(u))du}
		g(\rho\edstoppedi(s_1), ..., \rho\edstoppedi(s_j))} = 0.
	\end{multline*}
	Using the expression $\phi^{\epsilon,\delta} = \phi + \epsilon \phi_{1,0} + \epsilon^2 \phi_{2,0} + \delta^2 \phi_{0,2} + \epsilon \delta^2 \phi_{1,2}$ (see~\eqref{eq:def_phi_epsilon}, and the boundedness of $g$, one obtains the upper bound
	\begin{equation*}
		\abs{\esp{\Phi(\rho\edstoppedi)}} \lesssim \sum_{j=1}^6 \esp{\abs{r_j}},
	\end{equation*}
	with
	\begin{gather*}
		r_1 = \epsilon_i (\phi_{1,0}(f\edstoppedi(t)) - \phi_{1,0}(f\edstoppedi(s)))\\
		r_2 = \epsilon_i^2 (\phi_{2,0}(f\edstoppedi(t)) - \phi_{2,0}(f\edstoppedi(s)))\\
		r_3 = \delta_i^2 (\phi_{0,2}(f\edstoppedi(t),m\dstoppedi(t)) - \phi_{0,2}(f\edstoppedi(s),m\dstoppedi(s)))\\
		r_4 = \epsilon_i \delta_i^2 (\phi_{1,2}(f\edstoppedi(t),m\dstoppedi(t)) - \phi_{1,2}(f\edstoppedi(s),m\dstoppedi(s)))\\
		r_5 = \int_{s \wedge \tau^{\delta_i}}^{t \wedge \tau^{\delta_i}} \lrpar{\L^{\epsilon_i,\delta_i} \phi^{\epsilon_i,\delta_i}(f\edstoppedi(u),m\dstoppedi(u)) - \L \phi (\rho\edstoppedi(u))} du\\
		r_6 = \int_{t \wedge \tau^{\delta_i}}^{t} \L \phi(\rho\edstoppedi(u)) du - \int_{s \wedge \tau^{\delta_i}}^{s} \L \phi(\rho\edstoppedi(u)) du.
	\end{gather*}
Using estimates~\eqref{eq:control_phi_10},~\eqref{eq:control_phi_20},~\eqref{eq:control_phi_02} and~\eqref{eq:control_phi_12} of Proposition \ref{prop:corrector} and estimates~\eqref{eq:taue_bounds_m} and~\eqref{eq:L2_bound} on $m\dstoppedi$ and $f\edstoppedi$, one has
	\begin{gather*}
		\abs{r_1} \lesssim \epsilon_i \norm{f^{\epsilon_i,\delta_i}_0}_\fSet,\\
		\abs{r_2} \lesssim \epsilon_i^2 (1+\norm{f^{\epsilon_i,\delta_i}_0}_\fSet^2),\\
		\abs{r_3} \lesssim \delta_i^2 (1 + \norm{f^{\epsilon_i,\delta_i}_0}_\fSet) (1 + \delta_i^{-\alpha}),\\
		\abs{r_4} \lesssim \epsilon_i \delta_i^2 (1 + \norm{f^{\epsilon_i,\delta_i}_0}_\fSet^2) (1 + \delta_i^{-\alpha}).
	\end{gather*}
Since $\alpha < 1$ (see Definition~\ref{def:taue}), using Assumption~\ref{ass:f0}, one obtains $\esp{\abs{r_k}} \xrightarrow[i \to \infty]{} 0$, for $k \in \llrrbracket{1,4}$.

To treat the next term $r_5$, it is necessary to use Lemma~\ref{lemma:integral_bound}: using the estimate~\eqref{eq:control_perturbed_generator}, one obtains
	\begin{equation*}
		\esp{\abs{r_5}} \lesssim \epsilon_i (t-s)^{\frac12-\frac1\gamma} + \delta_i^2 (1 + \delta_i^{-2\alpha}) \xrightarrow[i \to \infty]{} 0,
	\end{equation*}
using the condition $\alpha<1$.

The last term $r_6$ is treated as follows: using the Cauchy-Schwarz inequality, the expression~\eqref{eq:Lrho-theta} for $\L\phi$ and the estimate~\ref{prop:L2_bound}, one obtains
	\begin{align*}
		\esp{\abs{r_6}}^2
			&\lesssim\esp{\norm{f^{\epsilon,\delta}_0}_\fSet^2} \esp{\lrpar{\abs{t - t \wedge \tau^{\delta_i}} + \abs{s - s \wedge \tau^{\delta_i}}}^2}\\
			&\lesssim \proba{\tau^{\delta_i} < T} \xrightarrow[i \to \infty]{} 0,
	\end{align*}
owing to Assumption~\ref{ass:f0} and to Proposition~\ref{prop:taue_to_infty} which gives the convergence in probability $\tau^{\delta}\to \infty$.

Gathering the results, one obtains $\esp{\Phi(\rho\edstoppedi)}\to 0$ when $i\to\infty$, hence $\esp{\Phi(\rho_\infty)} = \lim_{i \to \infty} \esp{\Phi(\rho\edstoppedi)} = 0$. This concludes the proof that, for all $\phi \in \Theta_{\lim}$, $M_\phi$ is a square-integrable martingale adapted to the filtration generated by $\rho_\infty$.

The final step is to prove that $\esp{\abs{M_\phi(t)}^2} = 0$, for all $t\in[0,T]$ and $\phi\in\Theta_{\lim}$.

Let $\Delta \in (0,\infty)$ be an arbitrarily small real-number and let $0 = t_0 < t_1 < ... < t_j = t$ be a subdivision of $[0,t]$ such that $\max_{k \in \llrrbracket{0,j-1}} \abs{t_{k+1} - t_k} \leq \Delta$. Since $M_\phi$ is a centered martingale, with $M_\phi(0)=0$, one has
\begin{align}
    \esp{\abs{M_\phi(t)}^2}
        &= \sum_{i = 0}^{n-1} \esp{\abs{M_\phi(t_{k+1}) - M_\phi(t_k)}^2} \nonumber\\
        &\leq 2 \sum_{i = 0}^{n-1} \esp{\abs{\phi(\rho_\infty(t_{k+1})) - \phi(\rho_\infty(t_k))}^2} + 2 \sum_{i = 0}^{n-1} \esp{\abs{\int_{t_k}^{t_{k+1}} \L\phi(\rho_\infty(s)) ds}^2}. \label{eq:estimate_quadratic_variation}
\end{align}

On the one hand, for $0 \leq t \leq t' \leq T$, one has the identity
\begin{multline*}
    \abs{\phi(\rho_\infty(t')) - \phi(\rho_\infty(t))}^2 = M_{\phi^2}(t') - M_{\phi^2}(t) - 2 \phi(\rho_\infty(t)) \lrpar{M_\phi(t') - M_\phi(t)}\\
    + \int_{t}^{t'} \L(\phi^2)(\rho_\infty(s)) ds - 2 \phi(\rho_\infty(t)) \int_{t}^{t'} \L(\phi)(\rho_\infty(s)) ds.
\end{multline*}
The test functions $\phi$ and $\phi^2$ belong to the class $\Theta_{\lim}$, therefore, owing to the first part of the proof, $M_\phi$ and $M_{\phi^2}$ are centered martingales for the filtration generated by $\rho_\infty$. As a consequence,
\begin{equation} \label{eq:increment_phi_rho}
    \esp{\abs{\phi(\rho_\infty(t')) - \phi(\rho_\infty(t))}^2} = \esp{\int_{t}^{t'} \L(\phi^2)(\rho_\infty(s)) ds - 2 \phi(\rho_\infty(t)) \int_{t}^{t'} \L\phi(\rho_\infty(s)) ds}.
\end{equation}
Since $\L$ is a first order derivative operator (see~\eqref{eq:def_L}), it is straightforward to check that $\L(\phi^2) = 2 \phi \L\phi$. One thus obtains
\begin{align*}
    \esp{\abs{\phi(\rho_\infty(t_{k+1})) - \phi(\rho_\infty(t_k))}^2}
        &= 2 \esp{\int_{t_k}^{t_{k+1}} \lrpar{\phi(\rho_\infty(s)) - \phi(\rho_\infty(t_k))}\L\phi(\rho_\infty(s)) ds}\\
        &\leq 2 \int_{t_k}^{t_{k+1}} \esp{\abs{\phi(\rho_\infty(s)) - \phi(\rho_\infty(t_k))}^2}^{1/2} \esp{\abs{\L\phi(\rho_\infty(s))}^2}^{1/2} ds,
\end{align*}
owing to the Cauchy-Schwarz inequality. Since $\phi \in \Theta_{\lim}$, one may use the inequality~\eqref {eq:Lphi_moments}, and $\phi$ is bounded. Thus,~\eqref{eq:increment_phi_rho} gives (with $t = t_k$ and $t' = s$)
\begin{equation*}
    \esp{\abs{\phi(\rho_\infty(s)) - \phi(\rho_\infty(t_k))}^2} \lesssim s-t_k.
\end{equation*}
Using~\eqref {eq:Lphi_moments}, one obtains
\begin{equation} \label{eq:estimate_quadratic_variation_1}
    \esp{\abs{\phi(\rho_\infty(t_{k+1})) - \phi(\rho_\infty(t))}^2} \lesssim \int_{t_k}^{t_{k+1}} (s - t_k)^{1/2} ds \lesssim \Delta^{3/2}.
\end{equation}

On the other hand,~\eqref {eq:Lphi_moments} yields
\begin{equation} \label{eq:estimate_quadratic_variation_2}
    \esp{\abs{\int_{t_k}^{t_{k+1}} \L\phi(\rho_\infty(s)) ds}^2} \lesssim \Delta^2.
\end{equation}

Finally,~\eqref{eq:estimate_quadratic_variation},~\eqref{eq:estimate_quadratic_variation_1} and~\eqref{eq:estimate_quadratic_variation_2} yield
\begin{equation*}
    \esp{\abs{M_\phi(t)}^2} \lesssim \Delta^{1/2} + \Delta \xrightarrow[\Delta \to 0]{} 0.
\end{equation*}

This concludes the proof that $\esp{\abs{M_\phi(t)}^2} = 0$. We deduce that, for all $t \in [0,T]$, almost surely, $M_\phi(t) = 0$. Since $\rho_\infty \in \C^0_T H^{-\varsigma}$, $M_\phi$ is a continuous process. As a consequence, almost surely, for all $t \in [0,T]$, $M_\phi(t) = 0$, which concludes the proof of Proposition~\ref{prop:limit_martingale}.

\end{proof}

\section{Proof of the auxiliary results}\label{sec:proofs}

\subsection{Asymptotic behavior of the stopping time}

The goal of this section is to provide the proof of Proposition~\ref{prop:taue_to_infty}: $\tau^\delta\to\infty$ in probability when $\delta\to 0$.

\begin{proof}[Proof of Proposition~\ref{prop:taue_to_infty}]
Let us first prove the first claim: $\tau_m^\delta\to\infty$ in probability.

Introduce the following sequence of real-valued random variables: for all $i\in\{0,1,\ldots\}$, set
\begin{equation*}
		S_i \doteq \sup_{t \in [i,i+1]} \norm{m(t)}_E.
\end{equation*}
The random variables $S_i$ are almost surely finite, since $\E[S_i^\gamma]<\infty$ for all $i\ge 0$ owing to Assumption~\ref{ass:moments_m_gamma}.

Owing to the condition $\alpha>2/\gamma$, there exists $\alpha'$ such that $2/\gamma<\alpha'<\alpha$. Using Markov inequality and Assumption~\ref{ass:moments_m_gamma}, one obtains
\begin{equation*}
		\sum_{i=1}^\infty \proba{S_i \geq i^{\frac{\alpha'}{2}}} \leq \sum_{i=1}^{\infty} \frac{\esp{S_i^\gamma}}{i^{\frac{\alpha'\gamma}{2}}} = \sup_{i \geq 1} \esp{S_i^\gamma} \sum_{i=1}^{\infty} \frac{1}{i^{\frac{\alpha'\gamma}{2}}} < \infty.
\end{equation*}
Using Borel-Cantelli's lemma, there exists a $\mathbb N$-valued random variable $I_0$, such that almost surely,
\begin{equation*}
		\forall i > I_0, S_i < i^{\frac{\alpha'}{2}}.
\end{equation*}
Define the random variable $Z=\underset{0\le i\le I_0}\sup S_i$. Since $I_0$ is almost surely finite, $Z$ is also an almost surely finite random variable. Observe that almost surely, for all $t\ge 0$, one has
\begin{equation*}
		\norm{m^\delta(t)}_E \leq S_{\floor{t \delta^{-2}}}\leq Z + \floor{t \delta^{-2}}^{\frac{\alpha'}{2}} \leq Z + \lrpar{t \delta^{-2}}^{\frac{\alpha'}{2}}.
\end{equation*}
We are now in position to conclude the proof of the first claim: for all $T\in(0,\infty)$, one obtains
\begin{equation*}
		\proba{\tau^\delta_m < T} = \proba{\sup_{t \in [0,T]} \norm{m^\delta(t)}_E > \delta^{-\alpha}} \leq \proba{Z + T^{\frac{\alpha'}{2}} \delta^{-\alpha'} > \delta^{-\alpha}} \xrightarrow[\delta \to 0]{} 0,
\end{equation*}
since $\alpha'<\alpha$. Thus $\tau_m^\delta\to \infty$ in probability.

It remains to prove the second claim: $\tau_\zeta^\delta\to\infty$ in probability. Let $p\in(\alpha,1)$ be an arbitrary real number. Owing to the Markov inequality and to the continuous embedding $H^{\floor{d/2}+2}_x \subset \C^1_x$, for all $T\in(0,\infty)$ and for all $\delta\in(0,\epsilon_0]$, one has
	\begin{align*}
		\proba{\tau^\delta_\zeta < T}
			&=\proba{\tau^\delta_\zeta < T, \tau^\delta_\zeta < \tau^\delta_m} + \proba{\tau^\delta_\zeta < T, \tau^\delta_\zeta \geq \tau^\delta_m}\\
			&\leq \proba{\delta\sup_{t \in [0,T]} \norm{\zeta\dstopped(t)}_{\C^1_x} \geq 1} + \proba{\tau^\delta_m < T}\\
			&\leq \delta^{2}  \esp{\sup_{t \in [0,T]} \norm{\zeta\dstopped(t)}_{\C^1_x}^2} + \proba{\tau^\delta_m < T}\\
			&\leq \delta^{2(1-p)} \sup_\delta \esp{\delta^{2p} \sup_{t \in [0,T]} \norm{\zeta\dstopped(t)}_{H^{\floor{d/2}+2}_x}^2} + \proba{\tau^\delta_m < T}.
	\end{align*}
Owing to the first claim, it thus remains to prove that
\[
\sup_\delta \esp{\delta^{2p} \sup_{t \in [0,T]} \norm{\zeta\dstopped(t)}_{H^{\floor{d/2}+2}_x}^2}<\infty,
\]
for all $T\in(0,\infty)$.

Let $\beta$ be an arbitrary multi-index of size $|\beta|\leq \floor{d/2}+2$. For all $\delta \in (0,\delta_0]$ and $T\in(0,\infty)$, one has
	\begin{equation} \label{eq:pre_zeta_moment}
		\esp{\sup_{t \in [0,T]} \norm{\frac{\partial^{\abs{\beta}} \zeta\dstopped(t)}{\partial x^\beta}}_{L^2_x}^2} \leq \int_{\T^d} \esp{\sup_{t \in [0,T]} \abs{\frac{\partial^{\abs{\beta}} \zeta\dstopped(t,x)}{\partial x^\beta}}^2}dx.
	\end{equation}
Below an upper bound for the expectation on the right-hand side of~\eqref{eq:pre_zeta_moment} is obtained using properties of a well-chosen martingale. For all $x\in\T^d$ and all multi-indices $\beta$	such that $|\beta|\leq \floor{d/2}+2$, introduce the auxiliary function $\theta_x^\beta$ defined by
	\begin{equation*}
		\theta_x^\beta(\n) = \frac{\partial^{\abs{\beta}} \sigma(\n)}{\partial x^\beta}(x),
	\end{equation*}
for all $n\in E$. Observe that $\theta_x^\beta$ is an element of the set 	$E^*\!\lrpar{\sigma}$, and one may define $\psi_x^\beta \doteq R_0 (\theta_x^\beta-\theta_x^\beta(\bar \sigma))$, which solves of the Poisson equation $-\L_m \psi_x^\beta =  \lrpar{\theta_x^\beta-\theta_x^\beta(\bar \sigma)}$, see Definition~\ref{def:resolvent}. Observe that one has the identities	
	\begin{align*}
		\frac{\partial^{\abs{\beta}} \zeta^\delta(t,x)}{\partial x^\beta} &= \frac{1}{\delta} \int_0^t \frac{\partial^{\abs{\beta}} (\sigma(m^\delta(s))-\bar \sigma)}{\partial x^\beta}(x) ds\\
		&= \frac{1}{\delta} \int_0^t \lrpar{\theta_x^\beta(m^\delta(s)) - \bar \theta_x^\beta} ds\\
		&=-\frac{1}{\delta} \int_0^t \L_m\psi_x^\beta(m^\delta(s)) ds.
	\end{align*}
Finally, for all $x\in\T^d$, $\delta\in(0,\delta_0]$ and $t\ge 0$, set
	\begin{align}
		M^\delta_{\delta^{1+p} \psi_x^\beta}(t)
			&= \delta^{1+p} \psi_x^\beta(m^\delta(t)) - \delta^{1+p} \psi_x^\beta(m(0)) - \frac{1}{\delta^2} \int_0^t \delta^{1+p} \L_m \psi_x^\beta(m^\delta(s))ds \nonumber\\
			&= \delta^{1+p} \psi_x^\beta(m^\delta(t)) - \delta^{1+p} \psi_x^\beta(m(0)) + \delta^p \frac{\partial^{\abs{\beta}} \zeta^\delta(t,x)}{\partial x^\beta} \label{eq:martingale_moment_zeta}
	\end{align}
Then the stopped process $(M^\delta_{\delta^{1+p}})^{\tau^\delta}=M^\delta_{\delta^{1+p}}(\cdot\wedge\tau^{\delta})$ is a martingale. In addition, one obtains the upper bound
\[
\esp{\delta^{2p}\sup_{t \in [0,T]} \abs{\frac{\partial^{\abs{\beta}} \zeta\dstopped(t,x)}{\partial x^\beta}}^2}\leq 2\esp{\sup_{t \in [0,T]} \abs{\delta^{1+p} \psi_x^\beta(m\dstopped(t))}^2}+\esp{\sup_{t \in [0,T]} \abs{M\dstopped_{\delta^{1+p} \psi_x^\beta}(t)}^2}.
\]
On the one hand, owing to estimates~\eqref{eq:estimate_resolvent} and~\eqref{eq:taue_bounds_m}, one obtains
	\begin{equation*}
		\esp{\sup_{t \in [0,T]} \abs{\delta^{1+p} \psi_x^\beta(m\dstopped(t))}^2} \lesssim \delta^{2 + 2p} \lrpar{1 + \delta^{- 2\alpha}} \lesssim 1,
	\end{equation*}
since $\alpha<1$.

On the other hand, Doob's Maximal Inequality yields
	\begin{align*}
		\esp{\sup_{t \in [0,T]} \abs{M\dstopped_{\delta^{1+p} \psi_x^\beta}(t)}^2} &\leq 4 \esp{\abs{M\dstopped_{\delta^{1+p} \psi_x^\beta}(T)}^2}\\
&\leq \frac{4}{\delta^2} \esp{(\delta^{1+p})^2 \int_0^{T \wedge \tau^\delta} \lrpar{\L_m\lrpar{\lrpar{\psi_x^\beta}^2} - 2 \psi_x^\beta \L_m\psi_x^\beta}(m^\delta(s))ds}\\
&\lesssim_T \delta^{2p} \lrpar{1 + \delta^{- 2\alpha}}\\
&\lesssim 1,
	\end{align*}
since $p\in(\alpha,1)$, using Assumption~\ref{ass:regular_Lm} and the estimates~\eqref{eq:estimate_resolvent} and~\eqref{eq:taue_bounds_m}.

Since all the bounds obtained above are uniform with respect to $x\in\T^d$, gathering the estimates one obtains
\eqref{eq:martingale_moment_zeta} yields
	\begin{equation*}
\int_{x\in\T^d}\esp{\delta^{2p} \sup_{t \in [0,T]} \abs{\frac{\partial^{\abs{\beta}} \zeta\dstopped(t,x)}{\partial x^\beta}}^2}dx \lesssim 1.
	\end{equation*}
The arguments explained above then yield the result: for all $T\in(0,\infty)$, 	
\[
\proba{\tau^\delta_\zeta < T}\lesssim \delta^{2(1-p)}+\proba{\tau^\delta_m < T}\underset{\delta\to 0}\to 0,
\]	
therefore one has $\tau^{\delta}_{\zeta}\to\infty$ in probability.
	
Since $\tau^\delta=\tau_m^\delta\wedge\tau_\zeta^\delta$, the proof of Proposition~\ref{prop:taue_to_infty} is completed.
\end{proof}

\subsection{A priori estimate of the solution in $\fSet$}

The goal of this section is to provide the proof of Proposition~\ref{prop:L2_bound}, which gives an almost sure a priori estimate for $f^{\epsilon,\delta,\tau^\delta}$ in the space $C_T^0\fSet$ and for $Lf^{\epsilon,\delta,\tau^\delta}$ in the space $L^2_{t\wedge \tau^\delta}\fSet$, in terms of the norm of the initial condition $\norm{f^{\epsilon,\delta}_0}_\fSet$.

\begin{proof}[Proof of Proposition~\ref{prop:L2_bound}]
For all $\delta\in(0,\delta_0]$ and all $t\ge 0$, set
\begin{equation*}
		\eta^\delta(t,\cdot) = \int_0^t \sigma(m^\delta(s))(\cdot) ds = \delta \zeta^\delta(t,\cdot) + t \bar \sigma(\cdot)\in \C^{\floor{d/2}+2}_x,
	\end{equation*}
where $\zeta^\delta$ is defined by\eqref{eq:def_zeta_epsilon}. Note that, owing to~\eqref{eq:taue_bounds_zeta}, for all $\delta\in(0,\delta_0]$, and $i=0,1$, one has
	\begin{equation} \label{eq:taue_bounds_eta}
		\underset{t\in[0,\tau^\delta\wedge T]}\sup~\norm{\eta^\delta(t)}_{\C^i_x} \leq 1 + T \norm{\bar \sigma}_{\C^i_x}.
	\end{equation}

Let us now introduce the following family of weight functions $\M^\delta$ indexed by $\delta\in(0,\delta_0]$: for all $t\geq 0$, $x\in\T^d$ and $v\in V$, set
	\begin{equation*}
		\M^\delta(t,x,v) = \exp\lrpar{-2 \eta^\delta(t,x)} \M(v),
	\end{equation*}
The associated weighted $L^2$ norm is defined by
	\begin{equation*}
		\norm{f}_{L^2(\M^\delta(t)^{-1})}^2 \doteq \iint\frac{\abs{f(x,v)}^2}{\M^\delta(t,x,v)} dx d\mu(v).
	\end{equation*}
	Note that for all $h \in \fSet$ and all $t \in [0,\tau^\delta\wedge T]$, the inequality~\eqref{eq:taue_bounds_eta} yields
	\begin{equation}\label{eq:equivnorms}
		\norm{h}_\fSet^2 \leq \norm{h}_{L^2(\M^\delta(t)^{-1})}^2 e^{2 + 2 T \norm{\bar \sigma}_{\C^0_x}}.
	\end{equation}
It is thus sufficient to prove estimates in the weight norm $\norm{\cdot}_{L^2(\M^\delta(t)^{-1})}$, to retrieve the a priori estimates in the space $\fSet$, when the condition $t\le \tau^\delta\wedge T$ is satisfied.

	For all $t\ge 0$, one has
	\begin{align*}
		\frac{1}{2} \partial_t \norm{f^{\epsilon,\delta}(t)}_{L^2(\M^\delta(t)^{-1})}^2
			&= \iint \frac{f^{\epsilon,\delta}(t,x,v)}{\M^\delta(t,x,v)} \partial_t f^{\epsilon,\delta}(t,x,v) dx d\mu(v)\\
			&\phantom{=}\ - \iint \frac{\abs{f^{\epsilon,\delta}(t,x,v)}^2}{2\abs{\M^\delta(t,x,v)}^2} \partial_t \M^\delta(t,x,v) dx d\mu(v)\\
			&= \mathcal A_{\epsilon,\delta}(t) + \mathcal B_{\epsilon,\delta}(t) + \mathcal C_{\epsilon,\delta}(t)
	\end{align*}
	with
	\begin{gather*}
		\mathcal A_{\epsilon,\delta}(t) = \frac{1}{\epsilon^2} \iint \frac{f^{\epsilon,\delta}(t,x,v)}{\M^\delta(t,x,v)} Lf^{\epsilon,\delta}(t,x,v) dx d\mu(v)\\
		\mathcal B_{\epsilon,\delta}(t) = - \frac{1}{\epsilon} \iint \frac{f^{\epsilon,\delta}(t,x,v)}{\M^\delta(t,x,v)} (a(v) + \epsilon b(v)) \cdot \nabla_x f^{\epsilon,\delta}(t,x,v) dx d\mu(v)\\
		\mathcal C_{\epsilon,\delta}(t) = - \iint \frac{\abs{f^{\epsilon,\delta}(t,x,v)}^2}{\M^\delta(t,x,v)} \lrpar{\sigma(m^\delta) + \frac{\partial_t \M^\delta}{2\M^\delta}}(t,x,v) dx d\mu(v).
	\end{gather*}
Note that the third term vanishes: $\mathcal C_{\epsilon,\delta}(t) = 0$ for all $t \ge 0$. Indeed, the definition of the weight function $\M^\delta$ yields the identity $\sigma(m^\delta) + \frac{\partial_t \M^\delta}{2\M^\delta} = 0$.

Using the identity $f^{\epsilon,\delta} = \rho^{\epsilon,\delta} \M - Lf^{\epsilon,\delta}$ and the property $\int_{V} Lf^{\epsilon,\delta}(t,x,v) d\mu(v) = 0$, the first term $\mathcal A_{\epsilon,\delta}(t)$ is written as follows: for all $t\ge 0$,
	\begin{align*}
		\mathcal A_{\epsilon,\delta}(t)
			&= \frac{1}{\epsilon^2} \iint \frac{f^{\epsilon,\delta}(t,x,v)}{\M^\delta(t,x,v)} Lf^{\epsilon,\delta}(t,x,v) dx d\mu(v)\\
			&= \frac{1}{\epsilon^2} \int_{\T^d} e^{2 \eta^\delta(t,x)} \rho^{\epsilon,\delta}(t,x) \int_{V} Lf^{\epsilon,\delta}(t,x,v) d\mu(v) dx - \frac{1}{\epsilon^2} \iint \frac{\abs{Lf^{\epsilon,\delta}(t,x,v)}^2}{\M^\delta(t,x,v)} d\mu(v) dx\\
			&= - \frac{1}{\epsilon^2} \norm{L f^{\epsilon,\delta}(t)}_{L^2(\M^\delta(t)^{-1})}^2.
	\end{align*}

The treatment of the second term $\mathcal B_{\epsilon,\delta}(t)$ requires technical computations. Using an integration by parts arguments, for all $t\ge 0$ one obtains
	\begin{align*}
		\mathcal B_{\epsilon,\delta}(t)
			&= - \frac{1}{\epsilon} \iint (a(v) + \epsilon b(v)) \cdot \frac{f^{\epsilon,\delta}(t,x,v) \nabla_x f^{\epsilon,\delta}(t,x,v)}{\M^\delta(t,x,v)} dx d\mu(v)\\
			&= - \frac{1}{\epsilon} \iint (a(v) + \epsilon b(v)) \cdot \frac{\frac{1}{2} \abs{f^{\epsilon,\delta}(t,x,v)}^2 \nabla_x \M^\delta(t,x,v)}{\abs{\M^\delta(t,x,v)}^2} dx d\mu(v)\\
			&=\frac{1}{\epsilon} \iint (a(v) + \epsilon b(v)) \cdot \nabla_x \eta^\delta(t,x) \frac{\abs{f^{\epsilon,\delta}(t,x,v)}^2}{\M^\delta(t,x,v)} d\mu(v) dx.
	\end{align*}
Using again the identity $f^{\epsilon,\delta} = \rho^{\epsilon,\delta} \M - Lf^{\epsilon,\delta}$ then gives, for all $t\ge 0$,
	\begin{align*}
		\mathcal B_{\epsilon,\delta}(t)
			&= \frac{1}{\epsilon} \int_{\T^d} e^{2 \eta^\delta(t,x)} \abs{\rho^{\epsilon,\delta}(t,x)}^2 \nabla_x \eta^\delta(t,x) \cdot \lrpar{\int_V (a(v) + \epsilon b(v)) \M(v) d\mu(v)} dx\\
			&\phantom{=}\ + \frac{1}{\epsilon} \iint (a(v) + \epsilon b(v)) \cdot \nabla_x \eta^\delta(t,x) \frac{\abs{Lf^{\epsilon,\delta}(t,x,v)}^2}{\M^\delta(t,x,v)} d\mu(v) dx\\
			&\phantom{=}\ - \frac{2}{\epsilon} \iint (a(v) + \epsilon b(v)) \cdot \nabla_x \eta^\delta(t,x) \rho^{\epsilon,\delta}(t,x) Lf^{\epsilon,\delta}(t,x,v) \frac{\M(v)}{\M^\delta(t,x,v)} d\mu(v) dx\\
			&= \mathcal B_{\epsilon,\delta}^1(t) + \mathcal B_{\epsilon,\delta}^2(t) + \mathcal B_{\epsilon,\delta}^3(t).
	\end{align*}
Let us now treat successively the three terms appearing on the right-hand side above.
	\begin{itemize}
		\item Owing to Assumption \ref{ass:coefficients} and to the definition of $J$, one has $\int_V (a(v)+\epsilon b(v)) \M(v) d\mu(v) = \epsilon J$. Therefore
		\begin{align*}
		    \abs{\mathcal B_{\epsilon,\delta}^1(t)}
		        &= \abs{\int_{\T^d} e^{2 \eta^\delta(t,x)} \abs{\rho^{\epsilon,\delta}(t,x)}^2 J \cdot \nabla_x \eta^\delta(t,x) dx}\\
		        &\leq \norm{b}_{L^\infty} \norm{\nabla_x \eta^\delta(t)}_{C_x} \int_{\T^d} e^{2 \eta^\delta(t,x)} \abs{\rho^{\epsilon,\delta}(t,x)}^2 dx.
		\end{align*}
		Owing to the Cauchy-Schwarz inequality, one obtains
		\begin{align}
			\int_{\T^d} e^{2 \eta^\delta(t,x)} \abs{\rho^{\epsilon,\delta}(t,x)}^2 dx
				&\leq \int_{\T^d} e^{2 \eta^\delta(t,x)} \int_V \frac{\abs{f^{\epsilon,\delta}(t,x,v)}^2}{\M^\delta(t,x,v)} d\mu(v) \int_V \M^\delta(t,x,v) d\mu(v) dx \nonumber\\
				&= \norm{f^{\epsilon,\delta}(t)}_{L^2(\M^\delta(t)^{-1})}^2 \label{eq:rho_bound},
		\end{align}
		since $\int_V \M^\delta(t,x,v) d\mu(v)= e^{-2 \eta^\delta(t,x)}$ for all $t\ge 0$ and all $x\in\T^d$. As a consequence, using the inequality~\eqref{eq:taue_bounds_eta}, for all $t \in [0,\tau^\delta\wedge T]$, one has
		\begin{equation*}
			\abs{\mathcal B_{\epsilon,\delta}^1(t)} \leq \norm{b}_{L^\infty} (1 + T \norm{\bar \sigma}_{\C^1_x}) \norm{f^{\epsilon,\delta}(t)}_{L^2(\M^\delta(t)^{-1})}^2.
		\end{equation*}

		\item Using the condition $\epsilon\le\epsilon_0$ with $\epsilon_0$ satisfying~\eqref{eq:def_epsilon_0}, and the inequality~\eqref{eq:taue_bounds_eta}, for $t \in [0, \tau^\delta\wedge T]$ one has
		\begin{equation*}
			\abs{\mathcal B_{\epsilon,\delta}^2(t)} \leq \frac{1}{4\epsilon^2} \norm{L f^{\epsilon,\delta}(t)}_{L^2(\M^\delta(t)^{-1})}^2.
		\end{equation*}

		\item Using Young's inequality, then using the inequalities~\eqref{eq:taue_bounds_eta} and~\eqref{eq:rho_bound}, one obtains, for all $t\in[0,\tau^\delta\wedge T]$,
		\begin{align*}
			\abs{\mathcal B_{\epsilon,\delta}^3(t)} &\leq 4 (\norm{a}_{L^\infty} + \norm{b}_{L^\infty})^2 \norm{\nabla_x \eta^\delta(t)}_{C_x}^2 \int_{\T^d} e^{2 \eta^\delta(t,x)} \abs{\rho^{\epsilon,\delta}(t,x)}^2 dx + \frac{1}{4\epsilon^2} \norm{L f^{\epsilon,\delta}(t)}_{L^2(\M^\delta(t)^{-1})}^2\\
&\leq 4 (\norm{a}_{L^\infty} + \norm{b}_{L^\infty})^2 (1 + T \norm{\bar \sigma}_{\C^1_x})^2 \norm{f^{\epsilon,\delta}(t)}_{L^2(\M^\delta(t)^{-1})}^2 + \frac{1}{4\epsilon^2} \norm{L f^{\epsilon,\delta}(t)}_{L^2(\M^\delta(t)^{-1})}^2.	
		\end{align*}
	\end{itemize}

Gathering the estimates, one obtains the following inequalities: for all $t\in[0,\tau^\delta\wedge T]$,
\[
\abs{\mathcal B_{\epsilon,\delta}(t)}\leq C_0(T)\norm{f^{\epsilon,\delta}(t)}_{L^2(\M^\delta(t)^{-1})}^2+\frac{1}{2\epsilon^2} \norm{L f^{\epsilon,\delta}(t)}_{L^2(\M^\delta(t)^{-1})}^2
\]
and
\[
\frac{1}{2} \partial_t \norm{f^{\epsilon,\delta}(t)}_{L^2(\M^\delta(t)^{-1})}^2+\frac{1}{2\epsilon^2} \norm{L f^{\epsilon,\delta}(t)}_{L^2(\M^\delta(t)^{-1})}^2\leq C_0(T)\norm{f^{\epsilon,\delta}(t)}_{L^2(\M^\delta(t)^{-1})}^2.
\]
where $C_0(T)\in(0,\infty)$ is a deterministic real number, and does not depend on $\epsilon,\delta$. Applying Gronwall's inequality, for all $t\in[0,\tau^{\delta}\wedge T]$, one gets
	\begin{equation*}
		\norm{f^{\epsilon,\delta}(t)}_{L^2(\M^\delta(t)^{-1})}^2 + \frac{1}{2\epsilon^2}\int_0^t  \norm{L f^{\epsilon,\delta}(s)}_{L^2(\M^\delta(t)^{-1})}^2 ds \leq C(T)\norm{f^{\epsilon,\delta}_0}_\fSet^2,
	\end{equation*}
where $C(T)\in(0,\infty)$ is a deterministic real number, and does not depend on $\epsilon,\delta$. Finally, using~\eqref{eq:equivnorms}, one obtains the a priori estimate~\eqref{eq:L2_bound}, which concludes the proof of Proposition~\ref{prop:L2_bound}.

\end{proof}

\subsection{Construction of the perturbed test function $\varphi^{\epsilon,\delta}$}

The objective of this section is to prove Proposition~\ref{prop:corrector}: more precisely, given a test function $\varphi\in\Theta_{\lim}$ (which only depends on $\rho=\lrangle f$), we construct the four correctors $\varphi_{1,0}$, $\varphi_{2,0}$, $\varphi_{0,2}$ and $\varphi_{1,2}$, such that the perturbed test function $\varphi^{\epsilon,\delta}$ defined by~\eqref{eq:def_phi_epsilon} satisfies the error estimates~\eqref{eq:control_perturbed_generator} and~\eqref{eq:control_error_phi}, and appropriate upper bounds.

Below, we first state auxiliary results concerning solutions of Poisson equations. The expression of $\L^{\epsilon,\delta}\varphi^{\epsilon,\delta}$ is then expanded in powers of $\epsilon$ and $\delta$, and the resulting equality yields a family of equations to be satisfied by the correctors in order to satisfy~\eqref{eq:control_perturbed_generator}. The correctors are finally constructed successively as solutions of appropriate Poisson equations. Eventually, it only remains to check the required regularity properties and upper bounds, this step follows from straightforward computations.

\subsubsection{Auxiliary results on Poisson equations}

As will be clear below, the construction of the correctors requires to solve Poisson equations of the type $-\L_2\psi(f,\n)=\vartheta(f,\n)$ (where $\n$ is considered as a fixed parameter) and $-\L_m\psi(f,\n)=\vartheta(f,\n)$ (where $f$ is considered as a fixed parameter). We describe below the corresponding centering conditions which are needed for the solvability of those Poisson equations, and give the expressions of the solutions.

To solve the first class of Poisson equations, let us introduce the process $\seq{g_f(t)}{t \in \R^+}$, associated with the infinitesimal generator $\L_2$, with the initial condition $g_f(0) = f$: for all $t\ge 0$, one has
\begin{equation*}
	g_f(t) = \rho \M + e^{-t} \lrpar{f - \rho \M},
\end{equation*}
where $\rho=\lrangle f=\lrangle g_f(t)$ for all $t\ge 0$.

The solvability of the first class of Poisson equations $-\L_2\psi=\vartheta$ is ensured when the following centering condition is satisfied: for all $\rho\in L_x^2$ and all $\n\in E$,
\begin{equation} \label{eq:centering_condition_L2}
	\vartheta(\rho \M,\n) = 0.
\end{equation}
If~\eqref{eq:centering_condition_L2} is satisfied, then the function $\psi$ defined by
\begin{equation} \label{eq:Poisson_solution_L2}
	\psi(f,\n) = \int_0^\infty \vartheta(g_f(t),\n) dt,
\end{equation}
for all $f\in\fSet$ and $\n\in E$, is a solution of the Poisson equation $-\L_2\psi=\vartheta$. It is the unique solution such that $\psi(\rho\M,\n)=0$ for all $\rho\in L_x^2$ and $\n\in E$.

The solvability of the second class of Poisson equations $-\L_m\psi=\vartheta$ is ensured when the following centering condition is satisfied: for all $f\in\fSet$,
\begin{equation} \label{eq:centering_condition_Lm}
	\int_E \vartheta(f,\n) d\nu(\n) = 0.
\end{equation}
If~\eqref{eq:centering_condition_Lm} is satisfied, then the function $\psi$ defined by
\begin{equation} \label{eq:Poisson_solution_Lm}
	\psi(f,\n) = \int_0^\infty \esp{\vartheta(f,m_\ell(t))} dt.
\end{equation}
for all $f\in\fSet$ and $\n\in E$, is a solution of the Poisson equation $-\L_m\psi=\vartheta$. It is the unique solution such that $\int\psi(f,\n) d\nu(\n) = 0$ for all $f\in\fSet$.

In the sequel, the following class of functions $\vartheta$ is considered. For all $h, k \in L_x^2$, define $\theta_{h,k}(\n) = \lrpar{h \sigma(\n),k}_{L_x^2}$ for all $\n\in E$. Observe that $\theta_{h,k} \in E^*\!\lrpar{\sigma}$ (see Definition~\ref{def:resolvent}). For all $f\in\fSet$ and $\n\in E$, set $\vartheta_{h,k}(f,\n)=\theta_{h,k}(n)-\bar\theta_{h,k}$,  with $\bar\theta_{h,k}=\int \theta_{h,k}(\n)d\nu(\n)=\lrpar{h\bar\sigma,k}_{L_x^2}$. For such functions $\vartheta_{h,k}$, the solution of the Poisson equation $-\L_m\psi_{h,k}=\vartheta_{h,k}=\theta_{h,k}-\bar\theta_{h,k}$ is given by
\[
\psi_{h,k}=R_0\lrpar{\theta_{h,k}(\ell) - \bar \theta_{h,k}},
\]
where the resolvent operator $R_0$ is introduced in Definition~\ref{def:resolvent}. Using the estimate~\eqref{eq:estimate_resolvent} and the to Riesz representation Theorem, for all $\n\in E$, there exists a bounded linear operator $R_0(\n) : L_x^2 \to L_x^2$ such that, for all $\n \in E$ and $h,k \in L_x^2$, $\psi_{h,k}(\n) = \lrpar{R_0(\n) h,k}_{L^2}$. Let us state some useful properties of the operators $R_0(\n)$.
	\begin{enumerate}
		\item[$(i)$] For all $\n \in E$, $R_0(\n)$ is self-adjoint.

		\item[$(ii)$] For all $\n \in E$ and $h\in L_x^2$, one has
		\begin{equation*}
			\norm{R_0(\ell)h}_{L^2} \lesssim \norm{h}_{L^2} \lrpar{1 + \norm{\ell}_E}.
		\end{equation*}

		\item[$(iii)$] For all $\n\in E$ and all $h\in H_x^1$, one has $R_0(\n)h\in H_x^1$, and
		\begin{equation*}
			\norm{R_0(\n)h}_{H^1_x} \lesssim \norm{h}_{H^1_x} \lrpar{1 + \norm{\n}_E}.
		\end{equation*}
	\end{enumerate}

The proof of Claim $(i)$ is straightforward: for all $h,k\in L_x^2$, $\theta_{h,k}=\theta_{k,h}$, thus $\psi_{h,k} = \psi_{k,h}$, which gives for all $\n\in E$
		\begin{equation*}
			\lrpar{R_0(\n) h, k}_{L^2} = \lrpar{h, R_0(\n) k}_{L^2}.
		\end{equation*}
Claim $(ii)$ follows from the estimate~\eqref{eq:estimate_resolvent}: one has $\abs{\psi_{h,k}(\ell)} \lesssim \norm{h}_{L^2} \norm{k}_{L^2} \lrpar{1 + \norm{\ell}_E}$, since ${\rm Lip}(\theta_{h,k})\leq {\rm Lip}(\sigma)\norm{h}_{L^2} \norm{k}_{L^2}$.

Finally, Claim $(iii)$ is obtained as follows: since $E\in \C^1_x$, one has $\norm{\theta_{h,k}}_{\Lip(E)} \leq \norm{h}_{H^1_x} \norm{k}_{H^{-1}_x}$ if $h \in H^1_x$ and $k \in L^2_x$. Using the estimate~\eqref{eq:estimate_resolvent} then gives $\abs{\psi_{h,k}(\ell)} \lesssim \norm{h}_{H^1_x} \norm{k}_{H^{-1}_x} \lrpar{1 + \norm{\ell}_E}$.

\subsubsection{Multiscale expansion and family of Poisson equations}

Let $\varphi^{\epsilon,\delta}$ be of the form~\eqref{eq:def_phi_epsilon}, where the correctors are not known at this stage. Then $\L^{\epsilon,\delta}\varphi^{\epsilon,\delta}$ is expressed as follows:
\begin{align*}
	\L^{\epsilon,\delta} \phi^{\epsilon,\delta}
		&= \epsilon^{-2} \L_2 \phi + \delta^{-2} \L_m \phi\\
		&\phantom{=}\ + \epsilon^{-1} \lrpar{\L_1 \phi + \L_2 \phi_{1,0}} + \epsilon \delta^{-2} \L_m \phi_{1,0}\\
		&\phantom{=}\ + \lrpar{\L_0 \phi + \L_1 \phi_{1,0} + \L_2 \phi_{2,0} + \L_m \phi_{0,2}} + \epsilon^{-2} \delta^2 \L_2 \phi_{0,2} + \epsilon^2 \delta^{-2} \L_m \phi_{2,0}\\
		&\phantom{=}\ + \epsilon \lrpar{\L_0 \phi_{1,0} + \L_1 \phi_{2,0} + \L_m \phi_{1,2}} + \epsilon^{-1} \delta^2 \lrpar{\L_1 \phi_{0,2} + \L_2 \phi_{1,2}}\\
		&\phantom{=}\ + \epsilon^2 \L_0 \phi_{2,0} + \delta^2 \L_0 \phi_{0,2} + \delta^2 \L_1 \phi_{1,2}\\
		&\phantom{=}\ + \epsilon \delta^2 \L_0 \phi_{1,2},
\end{align*}
where each line on the right-hand side corresponds to expressions of degree $-2,\ldots,3$ in terms of the variables $\epsilon,\delta$. The goal is to construct the correctors such that $\L^{\epsilon,\delta}\varphi^{\epsilon,\delta}-\L\varphi$ goes to $0$ when $(\epsilon,\delta) \to (0,0)$, more precisely such that~\eqref{eq:control_perturbed_generator} holds. The following family of conditions provide sufficient conditions on the correctors to satisfy~\eqref{eq:control_perturbed_generator}: on the one hand, the correctors solve the following system of equations,
\begin{align}
	&\L_2 \phi = \L_m \phi = 0, \label{eq:aim_corrector-2}\\
	&\L_1 \phi + \L_2 \phi_{1,0} = \L_m \phi_{1,0} = 0, \label{eq:aim_corrector-1}\\
	&\L_0 \phi + \L_1 \phi_{1,0} + \L_2 \phi_{2,0} + \L_m \phi_{0,2} = \L \phi, \label{eq:aim_corrector0}\\
	&\L_2 \phi_{0,2} = \L_m \phi_{2,0} = 0, \label{eq:aim_corrector0bis}\\
	&\L_1 \phi_{0,2} + \L_2 \phi_{1,2} = 0, \label{eq:aim_corrector1}
\end{align}
on the other hand, the following estimates are satisfied,
\begin{gather}
	\abs{\L_0 \phi_{1,0}(f,\n)} + \abs{\L_1 \phi_{2,0}(f,\n)} + \abs{\L_m \phi_{1,2}(f,\n)} + \abs{\L_0 \phi_{2,0}(f,\n)} \lesssim (1 + \norm{f}_\fSet^3) (1 + \norm{\n}_E), \label{eq:aim_corrector23_epsilon}\\
	\abs{\L_0 \phi_{0,2}(f,\ell)} + \abs{\L_1 \phi_{1,2}(f,\n)} + \abs{\L_0 \phi_{1,2}(f,\n)} \lesssim (1 + \norm{f}_\fSet^3) (1 + \norm{\n}_E^2), \label{eq:aim_corrector23_delta}
\end{gather}
for all $f\in \fSet$ and $\n\in E$. The estimates~\eqref{eq:control_phi_10},~\eqref{eq:control_phi_20},~\eqref{eq:control_phi_20} and~\eqref{eq:control_phi_12} are byproducts of the constructions of the correctors below.

Observe that the last equation in the system, Equation~\eqref{eq:aim_corrector1}, would not appear if $\epsilon=\delta$, or if a constraint of the type $\epsilon^{-1}\delta^2\to 0$ is satisfied. In the expression of $\L^{\epsilon,\delta}\varphi^{\epsilon,\delta}$ above, the condition~\eqref{eq:aim_corrector1} corresponds to a contribution of the term of degree $1$. One of the novelties of this work is to consider the general case, hence the need to construct the corrector $\varphi_{1,2}$. On the contrary, as will be clear below, the condition~\eqref{eq:aim_corrector0bis} on the correctors $\varphi_{0,2}$ and $\varphi_{2,0}$ is simpler to treat, it only means that $\varphi_{0,2}(f,\n)=\varphi_{0,2}(\rho\M,\n)$ and $\varphi_{2,0}(f,\n)=\varphi_{2,0}(f)$. In the case $\epsilon=\delta$, this only consists in writing the corrector $\varphi_2$ of order $2$ as a sum of two terms $\varphi_{2,0}+\varphi_{0,2}$.

Recall that the test function $\varphi$ belongs to the class of test functions $\Theta_{\lim}$ (see Definition~\ref{def:Theta-lim}): $\varphi(f,\n)=\chi(\lrpar{\rho,\xi}_{L^2_x})$. In the sequel, to simplify the expressions, we use the notation $\chi_\rho = \chi(\lrpar{\rho,\xi}_{L^2_x})$, $\chi'_\rho = \chi'(\lrpar{\rho,\xi}_{L^2_x})$, and similar notation for the higher order derivatives $\chi_\rho''$ and $\chi_\rho'''$.

Let us explain how the rest of this section proceeds. We first check that~\eqref{eq:aim_corrector-2} holds when $\varphi$ is a function in the class $\Theta_{\lim}$. Then, we successively construct the correctors $\varphi_{1,0}$, $\varphi_{2,0}$ and $\varphi_{0,2}$, and $\varphi_{1,2}$, as solutions of Poisson equations, using the tools above. We finally check that~\eqref{eq:aim_corrector23_epsilon} and~\eqref{eq:aim_corrector23_delta} holds. The proof of Proposition~\ref{prop:corrector} is concluded when all those steps are completed.

\subsubsection{Verification of the condition~\eqref{eq:aim_corrector-2}}

The function $\varphi$ is in the class $\Theta_{\lim}$: as a consequence $\varphi(f,\n)=\varphi(\rho\M)$ does not depend on $\n$, and only depends on $f$ through $\rho=\lrangle f$. It is then straightforward to check that $\L_m\varphi(f,\n)=\L_2\varphi(f,\n)=0$ for all $f\in\fSet$ and $\n\in E$. Therefore~\eqref{eq:aim_corrector-2} holds.

\subsubsection{Construction of the corrector $\varphi_{1,0}$ using the condition~\eqref{eq:aim_corrector-1}}

First, using~\eqref{eq:Poisson_solution_Lm}, the condition $\L_m\varphi_{1,0}=0$ implies that $\varphi_{1,0}$ is independent of $\n$: $\varphi_{1,0}(f,\n)=\varphi(f)$ for all $f\in\fSet,\n\in E$.

Second, $\varphi_{1,0}$ is the solution of the Poisson equation
\[
-\L_2\varphi_{1,0}(f)=\L_1\varphi(f)=-\chi'_\rho \lrpar{\lrangle{Af},\xi}_{L^2_x}.
\]
The centering condition~\eqref{eq:centering_condition_L2} is satisfied for $\vartheta=\L_1\varphi$, indeed $\lrangle{A\rho\M}=0$ for all $\rho\in L_x^2$. Using the expression~\eqref{eq:Poisson_solution_L2}, one thus defines the corrector $\varphi_{1,0}$ as follows: for all $f\in\fSet$ and $\n\in E$, set
\[
\varphi_{1,0}(f,\n)=\int_0^\infty \L_1 \phi(g_f(t)) dt.
\]
Recall that $\lrangle{g_f(t)} = \rho$ and $\lrangle{Ag_f(t)} = e^{-t} \lrangle{Af}$ for all $t\ge 0$, thus one has, for all $f\in\fSet$ and $\n\in E$,
\begin{align}
	\phi_{1,0}(f,\n)
		&= - \int_0^\infty e^{-t} \chi'_\rho \lrpar{\lrangle{Af},\xi}_{L^2_x} dt = - \chi'_\rho \lrpar{\lrangle{Af},\xi}_{L^2_x} \nonumber\\
		&= \chi'_\rho \lrpar{f,A\xi}_{L^2}, \label{eq:def_phi_10}
\end{align}
using an integration by parts argument in the last equality.

Using the expression~\eqref{eq:def_phi_10}, it is then straightforward to check that the estimate~\eqref{eq:control_phi_10} is satisfied. This concludes the construction of the corrector $\phi_{1,0}$.

\subsubsection{Construction of the correctors $\varphi_{2,0}$ and $\varphi_{0,2}$ using the conditions~\eqref{eq:aim_corrector0}--\eqref{eq:aim_corrector0bis}}

It is required to combine the two conditions~\eqref{eq:aim_corrector0} and~\eqref{eq:aim_corrector0bis} in order to build the correctors $\varphi_{2,0}$ and $\varphi_{0,2}$.

First, using~\eqref{eq:Poisson_solution_Lm}, the condition $\L_m\varphi_{2,0}=0$ implies that $\varphi_{2,0}$ is independent of $\n$: $\varphi_{2,0}(f,\n)=\varphi(f)$ for all $f\in\fSet,\n\in E$. Similarly, using~\eqref{eq:Poisson_solution_L2}, the condition $\L_2\varphi_{0,2}=0$ implies that $\varphi_{0,2}(f,\n)=\varphi_{0,2}(\rho\M,\n)$, with $\rho=\lrangle f$, for all $f\in\fSet,n\in E$.

Second, observe that one has
\begin{align*}
	\L_0 \phi(f,\n) + \L_1 \phi_{1,0}(f,\n)
		&= - \chi'_\rho \lrpar{\sigma(\n) \rho,\xi}_{L^2_x} - \chi'_\rho \lrpar{\lrangle{Bf},\xi}_{L^2_x}\\
		&\phantom{=}\ - \chi''_\rho \lrpar{\lrangle{Af},\xi}_{L^2_x} \lrpar{f,A\xi}_{L^2} - \chi'_\rho \lrpar{Af,A\xi}_{L^2}\\
		&= \vartheta_{0,2}(\rho\M,\n) + \vartheta_{2,0}(f),
\end{align*}
for all $f\in\fSet$ and $\n\in E$, where the auxiliary functions $\vartheta_{0,2}$ and $\vartheta_{2,0}$ are defined as follows:
\begin{align*}
    \vartheta_{0,2}(\rho\M,\n) &= - \chi'_\rho \lrpar{\sigma(\n) \rho,\xi}_{L^2_x}\\
    \vartheta_{2,0}(f)
        &= \chi'_\rho \lrpar{f,B\xi}_{L^2} + \chi''_\rho \lrpar{\lrangle{Af},\xi}_{L^2_x}^2 + \chi'_\rho \lrpar{f,A^2\xi}_{L^2}\\
        &= \chi''_\rho \lrpar{\lrangle{Af},\xi}_{L^2_x}^2 + \chi'_\rho \lrpar{f,A^2\xi + B\xi}_{L^2},
\end{align*}
Note that using~\eqref{eq:Lrho-theta}, for all $\rho\in L_x^2$, one has
\begin{align*}
\L\phi(\rho)&= \chi'_\rho \lrpar{\rho,\div_x(K\nabla_x \xi)}_{L^2_x} + \chi'_\rho \lrpar{\rho,J \cdot \nabla_x \xi}_{L^2_x} - \chi'_\rho \lrpar{\rho,\bar \sigma \xi}_{L^2_x}\\
&=\chi'_\rho \lrpar{\rho,K:\nabla_x^2\xi}_{L^2_x} + \chi'_\rho \lrpar{\rho,J \cdot \nabla_x \xi}_{L^2_x} - \chi'_\rho \lrpar{\bar \sigma \rho,\xi}_{L^2_x}\\
&= \chi'_\rho \lrpar{\rho\M,A^2\xi + B\xi}_{L^2} - \chi'_\rho \lrpar{\bar \sigma \rho,\xi}_{L^2_x}\\
&= \vartheta_{2,0}(\rho\M) + \int_E \vartheta_{0,2}(\rho\M,\cdot) d\nu\\
&=\vartheta_{2,0}(\rho\M) + \int_E \vartheta_{0,2}(\rho\M,\cdot) d\nu.
\end{align*}
Indeed, $\lrangle{A\rho\M}=0$ for all $\rho\in L_x^2$.

As a consequence, the correctors $\varphi_{2,0}$ and $\varphi_{0,2}$ are constructed as the solutions of the Poisson equations
\begin{gather}
	-\L_2 \phi_{2,0}(f) = \lrpar{\vartheta_{2,0}(f) - \vartheta_{2,0}(\rho\M)}, \label{eq:Poisson_def_phi_20}\\
	-\L_m \phi_{0,2}(\rho \M,\n) =  \lrpar{\vartheta_{0,2}(\rho \M, \n) - \int_E \vartheta_{0,2}(\rho\M,\cdot) d\nu}. \label{eq:Poisson_def_phi_02}
\end{gather}
Indeed, if~\eqref{eq:Poisson_def_phi_20} and~\eqref{eq:Poisson_def_phi_02} are satisfied, then the arguments above show that the conditions~\eqref{eq:aim_corrector0} and~\eqref{eq:aim_corrector0bis} hold. It remains to solve the two Poisson equations, and to check that the estimates~\eqref{eq:control_phi_20} and~\eqref{eq:control_phi_02} are satisfied.

On the one hand, the centering condition~\eqref{eq:centering_condition_L2} is satisfied, and~\eqref{eq:Poisson_solution_L2} gives the following definition for $\phi_{2,0}$: for all $f\in \fSet$, set
\begin{align}
	\phi_{2,0}(f)
		&= \int_0^\infty \lrpar{\vartheta_{2,0}(g_f(t)) - \vartheta_{2,0}(\rho\M)} dt \nonumber\\
\end{align}		
Using the properties $\lrangle{g_f(t)} = \rho$, $\lrangle{A g_f(t)} = e^{-t} \lrangle{Af}$	and $g_f(t) - \rho\M = e^{-t} \lrpar{f - \rho \M}$, for all $t\ge 0$, one obtains the following expression for $\phi_{2,0}$: for all $f\in\fSet$,
\begin{align}
	\phi_{2,0}(f)	
		&= \int_0^\infty \chi''_\rho \lrpar{\lrangle{Ag_f(t)},\xi}_{L^2_x}^2 dt + \int_0^\infty \chi'_\rho \lrpar{g_f(t)-\rho\M,A^2\xi + B\xi}_{L^2} dt \nonumber\\
		&= \int_0^\infty e^{-2t} \chi''_\rho \lrpar{\lrangle{Af},\xi}_{L^2_x}^2 dt + \int_0^\infty e^{-t} \chi'_\rho \lrpar{f-\rho\M,A^2\xi + B\xi}_{L^2} dt \nonumber\\
		&= \frac{1}{2} \chi''_\rho \lrpar{\lrangle{Af},\xi}_{L^2_x}^2 + \chi'_\rho \lrpar{f-\rho\M,A^2\xi + B\xi}_{L^2} \nonumber\\
		&= \frac{1}{2} \chi''_\rho \lrpar{f,A\xi}_{L^2}^2 + \chi'_\rho \lrpar{f-\rho\M,A^2\xi + B\xi}_{L^2}. \label{eq:def_phi_20}
\end{align}

On the other hand, the centering condition~\eqref{eq:centering_condition_Lm} is satisfied, and~\eqref{eq:Poisson_solution_Lm} gives the following definition for $\phi_{0,2}$: for all $\rho\in L_x^2$ and $\n\in E$, set
\begin{align}
	\phi_{0,2}(f,\n)
		&= \int_0^\infty \esp{\vartheta_{0,2}(\rho\M,m_\n(t)) - \int_E\vartheta_{0,2}(\rho\M,\cdot) d\nu} dt \nonumber\\
		&= - \int_0^\infty \esp{\chi'_\rho \lrpar{(\sigma(m_\n(t)) - \bar \sigma) \rho,\xi}_{L^2_x}} dt \nonumber\\
		&= - \chi'_\rho \lrpar{\rho,R_0(\n) \xi}_{L^2_x}. \label{eq:def_phi_02}
\end{align}

Using the expressions~\eqref{eq:def_phi_20} and~\eqref{eq:def_phi_02}, it is then straightforward to check that the estimates~\eqref{eq:control_phi_20} and~\eqref{eq:control_phi_02} are satisfied. This concludes the construction of the correctors $\phi_{2,0}$ and $\phi_{0,2}$.

\subsubsection{Construction of the corrector $\varphi_{1,2}$ using the condition~\eqref{eq:aim_corrector1}}

The last step in the construction of the correctors, is to define $\varphi_{1,2}$: owing to~\eqref{eq:aim_corrector1}, it is constructed as the solution of the Poisson equation $-\L_2\varphi_{1,2}=\L_1\varphi_{0,2}$. Using the expression~\eqref{eq:def_phi_02} of $\varphi_{0,2}$, one has
\begin{equation*}
	\L_1 \phi_{0,2}(f,\n) = \chi''_\rho \lrpar{\lrangle{Af},\xi}_{L^2_x} \lrpar{\rho,R_0(\n) \xi}_{L^2_x} + \chi'_\rho \lrpar{\lrangle{Af},R_0(\n) \xi}_{L^2_x},
\end{equation*}
for all $f\in\fSet$ and $\n\in E$.

Note that the centering condition~\eqref{eq:centering_condition_L2} is satisfied: for all $\rho\in L_x^2$ and $\n\in E$, one has
\[
\L_1 \phi_{0,2}(\rho\M,\n)=0,
\]
since $\lrangle{A\rho\M}=0$. Therefore, $\varphi_{1,2}$ is defined using~\eqref{eq:Poisson_solution_L2}: for all $f\in\fSet$ and $\n\in E$,
\[
	\phi_{1,2}(f,\n)=\int_0^\infty \L_1 \phi_{0,2}(g_f(t),\n) dt.
\]
Using the properties $\lrangle{g_f(t)} = \rho$ and $\lrangle{A g_f(t)} = e^{-t} \lrangle{Af}$ for all $t\ge 0$, one obtains the following expression for $\varphi_{1,2}$: for all $f\in\fSet$ and $\n\in E$,
\begin{align}
	\phi_{1,2}(f,\n)		
		&= \int_0^\infty e^{-t} \lrpar{\chi''_\rho \lrpar{\lrangle{Af},\xi}_{L^2_x} \lrpar{\rho,R_0(\n) \xi}_{L^2_x} + \chi'_\rho \lrpar{\lrangle{Af},R_0(\n) \xi}_{L^2_x}} dt \nonumber\\
		&= \chi''_\rho \lrpar{\lrangle{Af},\xi}_{L^2_x} \lrpar{\rho,R_0(\n) \xi}_{L^2_x} + \chi'_\rho \lrpar{\lrangle{Af},R_0(\n) \xi}_{L^2_x} \nonumber\\
		&= - \chi''_\rho \lrpar{f,A\xi}_{L^2} \lrpar{\rho,R_0(\n) \xi}_{L^2_x} - \chi'_\rho \lrpar{f,A(R_0(\n) \xi)}_{L^2}. \label{eq:def_phi_12}
\end{align}

Using the expression~\eqref{eq:def_phi_12}, it is then straightforward to check that the estimate~\eqref{eq:control_phi_12} is satisfied. This concludes the construction of the corrector $\phi_{1,2}$.

\subsubsection{Verification of the conditions~\eqref{eq:aim_corrector23_epsilon}--\eqref{eq:aim_corrector23_delta}}

Note that $\varphi^{\epsilon,\delta}$ defined by~\eqref{eq:def_phi_epsilon} belongs to the class of functions $\Theta$ given in Definition~\ref{def:Theta}. It thus only remains to check that the conditions~\eqref{eq:aim_corrector23_epsilon} and~\eqref{eq:aim_corrector23_delta} hold. This is done using the following expressions: for all $f\in\fSet$ and $\n\in E$, one has
\begin{align*}
	\L_0 \phi_{1,0}(f,\ell) &= - \chi''_\rho \lrpar{\sigma(\ell) \rho + \lrangle{Bf},\xi}_{L^2_x} \lrpar{f,A\xi}_{L^2} - \chi'_\rho \lrpar{\sigma(\ell) f,A\xi}_{L^2},
\end{align*}
\begin{align*}
	\L_1 \phi_{2,0}(f,\ell)
		&= \frac{1}{2} \chi'''_\rho \lrpar{f,A\xi}_{L^2}^3 + \chi''_\rho \lrpar{f,A\xi}_{L^2} \lrpar{f,A^2\xi}_{L^2}\\
		&\phantom{=}\ + \chi''_\rho \lrpar{f,A\xi}_{L^2} \lrpar{f-\rho\M,A^2\xi + B\xi}_{L^2} + \chi'_\rho \lrpar{f-\rho\M,A^3\xi + AB\xi}_{L^2},
\end{align*}
\begin{align*}
	\L_m \phi_{1,2}(f,\ell) &= \chi''_\rho \lrpar{f,A\xi}_{L^2} \lrpar{\rho,\sigma(\ell) \xi}_{L^2_x} + \chi'_\rho \lrpar{f,A(\sigma(\ell) \xi)}_{L^2},
\end{align*}
\begin{align*}
	\L_0 \phi_{2,0}(f,\ell)
		&= - \frac{1}{2} \chi'''_\rho \lrpar{\sigma(\ell) \rho + \lrangle{Bf},\xi}_{L^2_x} \lrpar{f,A\xi}_{L^2}^2 - \chi''_\rho \lrpar{f,A\xi}_{L^2} \lrpar{\sigma(\ell) f + Bf,A\xi}_{L^2}\\
		&\phantom{=}\ - \chi''_\rho \lrpar{\sigma(\ell) \rho + \lrangle{Bf},\xi}_{L^2_x} \lrpar{f-\rho\M,A^2\xi + B\xi}_{L^2}\\
		&\phantom{=}\ - \chi'_\rho \lrpar{\sigma(\ell) (f - \rho\M) + B(f - \rho\M),A^2\xi + B\xi}_{L^2},
\end{align*}
\begin{align*}
	\L_0 \phi_{0,2}(f,\ell) &= - \chi''_\rho \lrpar{\sigma(\ell) \rho + \lrangle{Bf},\xi}_{L^2_x} \lrpar{R_0(\ell) \rho,\xi}_{L^2_x} - \chi'_\rho \lrpar{\sigma(\ell) \rho + \lrangle{Bf},R_0(\ell) \xi}_{L^2_x},
\end{align*}
\begin{align*}
	\L_1 \phi_{1,2}(f,\ell)
		&= - \chi'''_\rho \lrpar{f,A\xi}_{L^2}^2 \lrpar{\rho,R_0(\ell) \xi}_{L^2_x} - \chi''_\rho \lrpar{f,A^2\xi}_{L^2_x} \lrpar{\rho,R_0(\ell) \xi}_{L^2_x}\\
		&\phantom{=}\ - \chi''_\rho \lrpar{f,A\xi}_{L^2} \lrpar{\rho,A(R_0(\ell) \xi)}_{L^2_x} - \chi''_\rho \lrpar{f,A\xi}_{L^2} \lrpar{f,A(R_0(\ell) \xi)}_{L^2}\\
		&\phantom{=}\ - \chi'_\rho \lrpar{f,A^2(R_0(\ell) \xi)}_{L^2},
\end{align*}
\begin{align*}
	\L_0 \phi_{1,2}(f,\ell)
		&= \chi''_\rho \lrpar{\sigma(\ell) \rho + \lrangle{Bf},\xi}_{L^2_x} \lrpar{f,A\xi}_{L^2} \lrpar{\rho,R_0(\ell) \xi}_{L^2_x} + \chi''_\rho \lrpar{\sigma(\ell) f Bf,A\xi}_{L^2_x} \lrpar{\rho,R_0(\ell) \xi}_{L^2_x}\\
		&\ + \chi''_\rho \lrpar{f,A\xi}_{L^2} \lrpar{\sigma(\ell) \rho + \lrangle{Bf},R_0(\ell) \xi}_{L^2_x} + \chi''_\rho \lrpar{\sigma(\ell) \rho + \lrangle{Bf},\xi}_{L^2_x} \lrpar{f,A(R_0(\ell) \xi)}_{L^2}\\
		&\ + \chi'_\rho \lrpar{\sigma(\ell) f + Bf,A(R_0(\ell) \xi)}_{L^2}.
\end{align*}
It is then straightforward to check that~\eqref{eq:aim_corrector23_epsilon} and~\eqref{eq:aim_corrector23_delta} hold. This concludes the proof of Proposition~\ref{prop:corrector}.

\section*{Acknowledgments}

The authors would like to thank Julien Vovelle for his remarks on an early version of this manuscript. The work of C.-E.~B. is partially supported by the following projects operated by the French National Research Agency: ADA (ANR-19-CE40-0019-02) and BORDS (ANR-16-CE40-0027-01).

%\bibliographystyle{alpha}
%\bibliography{fullbiblio}

\end{document}